\documentclass[twoside, 12pt]{amsart}


\usepackage[T1]{fontenc}
\usepackage[lf]{Baskervaldx} 
\usepackage[bigdelims,vvarbb]{newtxmath} 

\usepackage[cal=boondoxo]{mathalfa} 


\setlength{\textwidth}{\paperwidth}
\addtolength{\textwidth}{-2.5in}
\calclayout


\usepackage{latexsym}
\usepackage{amssymb}
\usepackage{amsmath}
\usepackage{amsthm} 
\usepackage{amscd}
\usepackage{graphicx}
\usepackage[all]{xy}
\usepackage{a4wide}
\usepackage{multirow}
\usepackage{multicol}
\usepackage{color}
\usepackage[dvipsnames]{xcolor}
\usepackage[colorlinks,final,hyperindex]{hyperref}
\usepackage[noabbrev,capitalize]{cleveref}

\usepackage{centernot} 

\makeatletter
\def\@tocline#1#2#3#4#5#6#7{\relax
  \ifnum #1>\c@tocdepth 
  \else
    \par \addpenalty\@secpenalty\addvspace{#2}%
    \begingroup \hyphenpenalty\@M
    \@ifempty{#4}{%
      \@tempdima\csname r@tocindent\number#1\endcsname\relax
    }{%
      \@tempdima#4\relax
    }%
    \parindent\z@ \leftskip#3\relax \advance\leftskip\@tempdima\relax
    \rightskip\@pnumwidth plus4em \parfillskip-\@pnumwidth
    #5\leavevmode\hskip-\@tempdima
      \ifcase #1
       \or\or \hskip 1em \or \hskip 2em \else \hskip 3em \fi%
      #6\nobreak\relax
    \dotfill\hbox to\@pnumwidth{\@tocpagenum{#7}}\par
    \nobreak
    \endgroup
  \fi}
\makeatother


\usepackage{tikz}
\usepackage{tikz-cd}
\usepackage{pgfplots}
\usepackage{pgfplotstable}
\tikzset{math3d/.style=
    {x= {(-0.353cm,-0.353cm)}, z={(0cm,1cm)},y={(1cm,0cm)}}}
\tikzset{JLL3d/.style=
    {x= {(0.4cm,-0.2cm)}, z={(0cm,1cm)},y={(-1cm,0cm)}}}
\usetikzlibrary{calc}
\usetikzlibrary{shapes,shapes.geometric,fit,positioning,calc,matrix}
\tikzset{
  optree/.style={scale=.5,thick,grow'=up,level distance=10mm,inner sep=1pt},
  comp/.style={draw=none,circle,fill,line width=0,inner sep=0pt},
  dot/.style={draw,circle,fill,inner sep=0pt,minimum width=3pt},
  circ/.style={draw,circle,inner sep=1pt,minimum width=4mm},
  emptycirc/.style={draw,circle,inner sep=1pt,minimum width=2mm},
  root/.style={level distance=10mm,inner sep=1pt},
  leaf/.style={draw=none,circle,fill,line width=0,inner sep=0pt},
  nodot/.style={draw,circle,inner sep=1pt},
}


\usepackage{graphicx,calc}
\newlength\myheight
\newlength\mydepth
\settototalheight\myheight{Xygp}
\settodepth\mydepth{Xygp}
\setlength\fboxsep{0pt}
\newcommand*\inlinegraphics[1]{%
  \settototalheight\myheight{Xygp}%
  \settodepth\mydepth{Xygp}%
  \raisebox{-\mydepth}{\includegraphics[height=\myheight]{#1}}%
}


\definecolor{Chocolat}{rgb}{0.09, 0.09, 0.7}
\definecolor{BleuTresFonce}{rgb}{0.215, 0.215, 0.36}
\hypersetup{citecolor=BleuTresFonce, linkcolor=Chocolat}

\newtheorem{definition}{Definition}[section]
\newtheorem{proposition}[definition]{Proposition}
\newtheorem{lemma}[definition]{Lemma}
\newtheorem{theorem}[definition]{Theorem}
\newtheorem{corollary}[definition]{Corollary}
\newtheorem*{problem}{Problem}

\theoremstyle{remark}
\newtheorem{example}[definition]{\sc Example}
\newtheorem{remark}[definition]{\sc Remark}


\newcommand{\RR}{\mathbb{R}}
\newcommand{\ZZ}{\mathbb{Z}}
\newcommand{\NN}{\mathbb{N}}


\DeclareMathOperator{\Ima}{Im} 
\DeclareMathOperator{\cone}{Cone} 

\newcommand{\ZP}{\mathbb{Z}_{>0}}

\newcommand{\La}{\mathcal{L}}

\newcommand{\PT}[1]{\mathrm{PT}_{#1}} 

\newcommand{\PolySub}{\mathsf{Poly}}

\newcommand{\sF}{\mathcal{F}}
\DeclareMathOperator{\tp}{top}
\DeclareMathOperator{\bm}{bot}

\DeclareMathOperator{\conv}{conv}

\newcommand{\tr}{\mathrm{tr}}
\newcommand{\id}{\mathrm{id}}

\DeclareMathOperator{\codim}{codim}

\newcommand{\as}{{\scriptstyle \text{\rm !`}}}



\title{The diagonal of the operahedra}

\author{Guillaume Laplante-Anfossi}
\address{Universit\'e Sorbonne Paris Nord, Laboratoire Analyse, G\'eom\'etrie et Applications, CNRS, UMR 7539, F-93430 Villetaneuse, France.}
\email{laplante-anfossi@math.univ-paris13.fr}

\date{\today}

\subjclass[2010]{Primary 52B11; Secondary 18M70} 

\keywords{Polytopes, approximation of the diagonal, operads, hyperplane arrangements, fiber polytopes, associahedra, permutahedra, graph-associahedra, generalized permutahedra.}

\thanks{The author was supported by the European Union's Horizon 2020 research and innovation program under the Marie Sklodowska-Curie grant agreement No 754362 \inlinegraphics{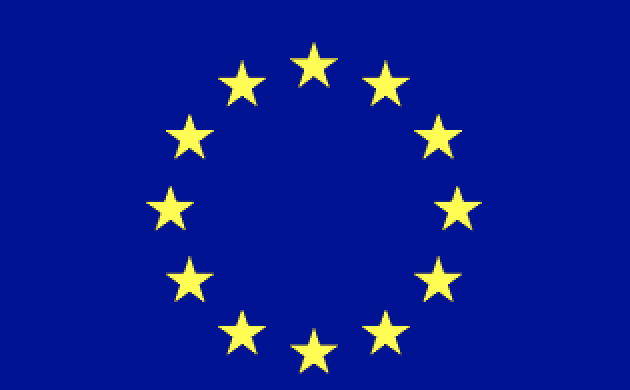}, by the Natural Sciences and Engineering Research Council of Canada (NSERC) and by the ANR-20-CE40-0016 Higher Algebra, Geometry and Topology.}

\begin{document}

\begin{abstract}
The primary goal of this article is to set up a general theory of coherent cellular approximations of the diagonal for families of polytopes by developing the method introduced by N. Masuda, A. Tonks, H. Thomas and B. Vallette. We apply this theory to the study of the operahedra, a family of polytopes ranging from the associahedra to the permutahedra, and which encodes homotopy operads. After defining Loday realizations of the operahedra, we make a coherent choice of cellular approximations of the diagonal, which leads to a compatible topological cellular operad structure on them. This gives a model for topological and algebraic homotopy operads and an explicit functorial formula for their tensor product. 
\end{abstract}

\maketitle

\setcounter{tocdepth}{1}
\tableofcontents

\section*{Introduction} 

\subsection*{State of the art} The present work lies at the intersection of the theory of polytopes and the operadic calculus. The starting point is the following observation: for a non-trivial polytope $P$, the image of the set-theoretic diagonal $\triangle_P:P\to P\times P, x\mapsto (x,x)$ is not a union of faces of $P\times P$. One is led to the problem of finding a cellular approximation to $\triangle_P$, that is finding a cellular map $\triangle_P^{\textrm{cell}} : P \to P\times P$ which is homotopic to $\triangle_P$ and which agrees with $\triangle_P$ on the vertices of $P$, see \cref{figure:cellularapproximation}.

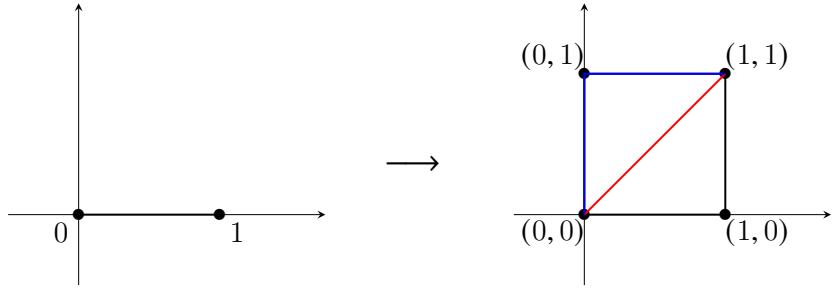
\begin{figure}[h!]
\centering
\resizebox{0.7\linewidth}{!}{
\begin{tikzpicture}[>=stealth]

\path[<-] (3.5,0)node[right]{$ $} edge(-1,0) (0,3)node[above]{$ $} edge(0,-1);

\node at(0,-0.01){$\bullet$};

\node at(-0.25,-0.25){$0$};

\node at(2,-0.01){$\bullet$};

\draw[thick] (0,0)--(2,0);

\node at(2.25,-0.25){$1$};

\end{tikzpicture}
\quad \resizebox{0.05\linewidth}{!}{\raisebox{3em}{$\longrightarrow$}} \quad \quad
\begin{tikzpicture}[>=stealth]

\path[<-] (3.5,0)node[right]{$ $} edge(-1,0) (0,3)node[above]{$ $} edge(0,-1);

\node at(0,-0.01){$\bullet$};

\node at(-0.45,-0.25){$(0,0)$};

\node at(2,-0.01){$\bullet$};

\node at(2.45,-0.25){$(1,0)$};

\node at(2,2-0.01){$\bullet$};

\node at(2.45,2.25){$(1,1)$};

\node at(0,2-0.01){$\bullet$};

\node at(-0.45,2.25){$(0,1)$};

\draw[thick] (0,0)--(2,0)--(2,2)--(0,2)--cycle;

\draw[thick, red] (0,0)--(2,2);

\draw[thick, blue] (0,0)--(0,2)--(2,2);

\end{tikzpicture}}
\caption{The set-theoretic diagonal of the unit interval (in red) is not cellular. One needs to find a cellular approximation (in blue).}
\label{figure:cellularapproximation}
\end{figure}

One can always find such an approximation, however, a problem of fundamental importance in algebraic topology is to find coherent cellular approximations of the diagonal for families of polytopes. 
For instance, the cup product on singular and cubical cohomology comes from coherent cellular approximations of the diagonal of the standard simplices and cubes, the Alexander--Whitney \cite{EilenbergZilber53, EilenbergMacLane54} and Serre \cite{Serre51} maps, respectively. 

More recently, families of greater combinatorial complexity have appeared in operad theory. 
The first seminal example is the family of associahedra. In contrast with the standard simplices and cubes, each face of an associahedron is not itself an associahedron, but a product of lower-dimensional associahedra, which underlies the algebraic structure of an operad. 
More precisely, the cellular chains on the associahedra are naturally endowed with an operad structure which encodes associative algebras up to homotopy \cite{Stasheff63}. 
In light of this fact, finding a family of coherent cellular approximations of the diagonal of the associahedra becomes a very desirable objective, as it defines a functorial tensor product of $A_\infty$-algebras \cite{SaneblidzeUmble04,MarklShnider06}. 
Such a universal formula has applications in different fields of mathematics, for instance the homology of fibered spaces \cite{Proute86}, string field theories \cite{GaberdielZwiebach97} and Fukaya categories \cite{Seidel08}. 

N. Masuda, A. Tonks, H. Thomas and B. Vallette introduced in \cite{MTTV19} a method for finding coherent cellular approximations of the diagonal for families of polytopes, using the theory of fiber polytopes of L. J. Billera and B. Sturmfels \cite{BilleraSturmfels92}. They applied it to the associahedra and obtained a coherent family of approximations, which led to a topological cellular operad structure on them. 
This provided a model for topological $A_\infty$-algebras and an explicit functorial formula for their tensor product. 
Applying the cellular chains functor, it is possible to recover the formula of M. Markl and S. Shnider \cite{MarklShnider06}, which should coincide with the one of \cite{SaneblidzeUmble04}. 
The key feature, which makes this problem highly constrained, is requiring the operadic composition maps to be compatible with the approximation of the diagonal. Such composition maps are in fact unique \cite[Proposition 7]{MTTV19}, and this uniqueness property is precisely the one allowing for the operad structure \cite[Theorem 1]{MTTV19}. 

The diagonal of the associahedra admits a particularly simple description of its cellular image in terms of the Tamari order, so unexpectedly simple that J.-L. Loday was led to the name "magical formula". 
However, one cannot expect a similar formula for other families of polytopes, and an explicit combinatorial description for the cellular image of the approximation of the diagonal of an arbitrary polytope is missing in the work of \cite{MTTV19}.

\subsection*{Present results} 
The first contribution of the present paper is to give such a universal formula, which can be applied to any polytope (\cref{thm:universalformula}), and which is expressed in terms of a new conceptual object: its \emph{fundamental hyperplane arrangement} (\cref{def:fundamentalhyperplane}). 
In the case of the simplices, the theory developed here allows one to recover conceptually a perturbative formula due to M. Abouzaid \cite{Abouzaid09} for the intersection pairing on cellular chains on a manifold, see \cref{rem:Abouzaid}. 
This suggests deeper connections with the intersection theory on toric varieties \cite{FultonSturmfels97} (see \cref{rem:FultonSturmfels}), combinatorial algebraic topology \cite{RomeroSergeraert19,KaufmannMardones21}, higher category theory \cite{KapranovVoevodsky91,MedinaMardones20}, discrete and continuous Morse theory \cite{Forman98,FriedmanMardonesSinha21} and physics \cite{Thorngren18,Tata20}. 

As already mentioned the associahedra encode homotopy associative algebras, and homotopy operads were defined by P. Van der Laan in \cite{VanDerLaan03} as a multi-linear generalization of homotopy associative algebras. 
This leads to thinking about a multi-linear generalization of the associahedra, which encodes homotopy operads. 
Such a generalization was provided by J. Obradovi\'c in \cite{Obradovic19}. 

\begin{center}
\begin{tikzcd}
\text{Associative algebras} \arrow[rr, "\textit{Multi-linear operations}", dashed] \arrow[d] &  & \text{Operads} \arrow[d]                        \\
\text{Associative alg. up to homotopy} \arrow[rr, dashed] \arrow[<->,d, dotted]   &  & \text{Operads up to homotopy} \arrow[<->,d, dotted] \\
\text{Associahedra}    &  & \textit{Operahedra}                            
\end{tikzcd}
\end{center}

The second contribution of this paper is to define Loday realizations of the operahedra, the family of polytopes encoding homotopy operads, and to apply to it the general theory developed in the first part. 
In contrast with existing realizations of the operahedra, these integer-coordinates realizations, which generalize J.-L. Loday's realizations of the associahedra \cite{Loday04a}, present simple geometric properties that ease calculations.  
They allow for the definition of a coherent family of cellular approximations of the diagonal, which lead to a compatible topological cellular operad structure on the family of Loday realizations of the operahedra.
This is the first topological cellular operad structure for this family of polytopes, which provides a model of topological and algebraic operads up to homotopy (\cref{thm:MainOperad}) and an explicit functorial formula for their tensor product (\cref{coroll:functorialtensor}). This formula presents interesting combinatorial properties, and agrees with the magical formula for the associahedra \cite[Theorem 2]{MTTV19}.  

In addition to the associahedra, the operahedra contain yet another important family of polytopes: the permutahedra. The $(n-1)$-dimensional standard permutahedron is defined as the convex hull of all the permutations of $\{1,\ldots,n\}$. 
Closely related to various properties of the symmetric group, it has important applications in algebraic topology, appearing in the study of iterated loop spaces \cite{Milgram66}, $E_n$-operads \cite{Berger97} and topological Hochschild cohomology \cite{McClureSmith03,KaufmannZhang17}.  

In order to define a cellular approximation of the diagonal of the permutahedron, we first have to compute its fundamental hyperplane arrangement (\cref{thm:permutohyperplane}). 
This new hyperplane arrangement refines the braid arrangement and deserves further study. In contrast with the cases of the simplices, the cubes and the associahedra, there are many distinct diagonals that agree with the natural order on the vertices, in this case the weak Bruhat order. So, for the first time, one has to make a choice of approximation. In the case of the operahedra, this choice is further restricted, but not completely determined, by requiring coherence with operadic composition, see \cref{prop:samechambers}.

General geometric arguments show that a choice of approximation of the diagonal for a polytope $P$ gives a choice of approximation for any polytope $Q$ whose normal fan coarsens the one of $P$ (\cref{coroll:coarseningoriented}). 
Moreover, the universal formula for the diagonal of $P$ applies \emph{mutatis mutandis} to $Q$ (\cref{coroll:coarseninguniversal}). 
Since the normal fan of any operahedron is refined by the normal fan of the permutahedron, we restrict our attention to the latter. 
In fact, the preceding argument shows that the formula obtained here applies immediately to all generalized permutahedra \cite{Postnikov09}, which precisely are the polytopes whose normal fans coarsen the one of the permutahedron. 

\subsection*{Future directions} The family of generalized permutahedra include an example of fundamental importance in symplectic topology: the multiplihedra \cite{Stasheff70,Forcey08,Mazuir21}. 
Further, a cellular approximation of the diagonal of the multiplihedra would allow one to define the tensor product of $A_\infty$-categories \cite{LOT20}, which is the subject of an ongoing work with T. Mazuir. Generalized permutahedra also include all graph-associahedra, to which our formula applies immediately. This comprises
\begin{itemize}
    \item the family encoding homotopy modular operads \cite{Ward19},
    \item all the families encoding operadic-like structures described in \cite{BMO20}.
\end{itemize}
Other applications of the theory presented here include 
\begin{itemize}
\item the 2-associahedra which is of great interest in symplectic topology \cite{Bottman18}, 
\item the freehedra encoding representations of a derived algebraic group up to homotopy \cite{AbadCrainicDherin11,AbadCrainic13,Poliakova20}, and
\item the assocoipihedra that intervene in string topology \cite{DrummondColePoirierRounds15,PoirierTradler17,PoirierTradler19}, and which were recently realized as convex polytopes \cite{PilaudPeeble22}.
\end{itemize}

There are already several important examples of operads up to homotopy in the literature. One of them is given by the singular chains of configuration spaces of points in the plane \cite[Section 5]{VanDerLaan03}, which are quasi-isomorphic to the singular chains on the little discs operad. Another closely related structure is the operad of normalized cacti \cite{WahlCacti20}, arising in the study of moduli spaces of Riemann surfaces. The present tensor product applies to both of them.

On the combinatorial side, the operahedra lie at the intersection of many interesting families of polytopes, they can thus be studied from these different perspectives. 
They correspond to
\begin{itemize}
    \item graph-associahedra, where the underlying graph is the line graph of a tree, that is a clawfree block graph \cite[Theorem 8.5]{Harary69},
    \item nestohedra \cite{FeichtnerSturmfels05} and generalized permutahedra \cite{Postnikov09} which can be obtained by removing facet-defining inequalities in the description of the permutahedra \cite{Pilaud14},
    \item a subfamily of hypergraph polytopes \cite{DP11,CIO18,Obradovic19},
    \item a subfamily of poset associahedra \cite{Galashin21}.
\end{itemize}
Standard weight Loday realizations of the operahedra were already defined in a different manner by V. Pilaud in \cite{PilaudSignedTree13}, as part of a broader family which generalizes C. Hohlweg and C. Lange's realizations of the associahedra \cite{HL07}. One can naturally wonder if the techniques of \cite{PilaudSignedTree13} can be extended to all block graph associahedra, which is the subject of ongoing work with V. Pilaud. 

Finally, the present work sheds light on the substitution operation on graph-associahedra defined by S. Forcey and M. Ronco \cite{ForceyRonco19} and prompts applications to Hopf algebra structures on generalized permutahedra \cite{AguiarArdila17}.  

\subsection*{Conventions} 
In the following we use the conventions and notations of \cite{Ziegler95} for convex polytopes and the ones of \cite{LodayVallette12} for operads. Throughout the paper we will consider only planar trees.

\subsection*{Aknowledgements} 
I would like to warmly thank my advisors Eric Hoffbeck and Bruno Vallette for introducing me to the subject, for many invaluable discussions and for their careful reading of the manuscript. 
I am also indebted to Thibaut Mazuir, Arnau Padrol, Vincent Pilaud and Hugh Thomas for numerous discussions and insights. 
The observation leading to \cref{prop:inversionnumber} is due to Christian Gaetz, and the argument of \cref{prop:topbot} was suggested by Arnau Padrol. 
I would like to thank the anonymous referee for their attentive reading, pointing out some errors and providing useful suggestions that have greatly improved the paper.

\section{Cellular approximation of the diagonal of a polytope} \label{section:cellularappoximation}

In this section, we study the method introduced in \cite{MTTV19} for finding a cellular approximation of the diagonal of a polytope and establish its general properties. We associate to any polytope $P$ its \emph{fundamental hyperplane arrangement} $\mathcal{H}_P$, where each chamber defines an approximation of the diagonal. Two different chambers can define the same approximation, and bringing down the walls between them leads to a new notion of "quasi-positively oriented" polytope. 

An approximation of the diagonal of $P$ always exists and it depends only on the normal fan of the polytope. Its image admits a description in terms of the poset structure on the vertices of $P$ induced by the choice of a chamber in $\mathcal{H}_P$. In the case of the associahedra, one recovers the Tamari order. The condition $\tp(F)\leq\bm(G)$ in M. Markl and S. Schnider's "magical formula" for the associahedra \cite{MarklShnider06} turns out to be present in any approximation of the diagonal, but it is not sufficient to characterize its image in general, as shows the case of the permutahedra treated in the next section. 

A careful study of the fundamental hyperplane arrangements leads to a universal formula describing combinatorially the cellular image of the approximation of the diagonal for any polytope $P$. Once one has established the universal formula for $P$, one has in fact established the formula for any polytope $Q$ whose normal fan coarsens the one of $P$.

\subsection{General method} \label{section:generalmethod} 
In the following, we adopt notations and conventions of the monograph of G. M. Ziegler \cite{Ziegler95} on the theory of polytopes. 
Let $P\subset \RR^n$ be a polytope. Except for the case where $P$ is the trivial polytope, the diagonal map 
\begin{equation*}
\begin{matrix}
    \triangle_P & \text{:} & P& \to & P\times P   \\  
    & & z & \mapsto & (z ,z)
\end{matrix}
\end{equation*}
is not cellular, that is, its image is not a union of cells of $P\times P$. 

\begin{problem} Find a \emph{cellular approximation of the diagonal of $P$}, that is, a cellular map which is homotopic to $\triangle_P$ and which coincides with $\triangle_P$ on the vertices of $P$.
\end{problem}

We consider a special case of the fiber polytope construction of L. J. Billera and B. Sturmfels \cite{BilleraSturmfels92}, see also \cite[Chapter 9]{Ziegler95} for more details. Let $\mathcal{L}(P)$ denote the lattice of faces of $P$ and let $(e_i)_{1\leq i\leq n+1}$ denote the standard basis of $\RR^{n+1}$. For a polytope $P\subset \RR^n$ and a vector $\vec v \in \RR^n$, we consider the projection $\pi$ and the linear form $\phi$ defined respectively by 
\setcounter{MaxMatrixCols}{20}
\begin{equation*}
\begin{matrix}
\pi & \text{:} & P\times P & \to & P  & \text{ and } & \phi & \text{:}  & \RR^n \times \RR^n & \to & \RR \\  
& & (x, y) & \mapsto & \tfrac{1}{2}(x+ y) & &  & &  (x, y) & \mapsto & \langle x- y, \vec v \rangle \ .
\end{matrix}
\end{equation*}
The linear map 
\begin{equation*}
\begin{matrix}
\pi^\phi & \text{:} & P\times P & \to & P\times \mathbb{R}  \\  
& & (x, y) & \mapsto & (\pi(x,y),\phi(x,y))  
\end{matrix}
\end{equation*}
defines a polytope $P^\phi\coloneqq \Ima(\pi^\phi) \subset \RR^{n+1}$. Let \[\mathcal{L}^{\downarrow}(P^\phi)\coloneqq \{ F \in \mathcal{L}(P^\phi) \ | \ \forall x \in F, \lambda>0, x - \lambda e_{n+1} \notin P^\phi \} \subset \mathcal{L}(P^\phi)\] be the family of lower faces of $P^\phi$. Then, the set of faces \[ \mathcal{F}^\phi \coloneqq (\pi^\phi)^{-1}\mathcal{L}^{\downarrow}(P^\phi)\subset \mathcal{L}(P\times P)\cong\mathcal{L}(P)\times\mathcal{L}(P) \] induces a subdivision $\pi(\mathcal{F}^\phi)$ of $P$ that is called \emph{coherent}. As indicated by the last isomorphism, the faces of $P\times P$ are pairs of faces of $P$. Depending on the context, we will denote such a pair by $F\times G$ or $(F,G)$. 

One always has $\dim (F\times G)\geq\dim(\pi(F\times G))$ for all $F\times G\in\mathcal{F}^\phi$. The coherent subdivision $\pi(\mathcal{F}^\phi)$ is said to be \emph{tight} if $\dim (F\times G)=\dim(\pi(F\times G))$ for all $F\times G\in\mathcal{F}^\phi$. 

To any tight coherent subdivision $\pi(\mathcal{F}^\phi)$ of $P$ one can associate the unique section $\triangle_{(P,\vec v)}: P\to P\times P$ of $\pi$ which minimizes $\phi$ in each fiber, see \cite[Lemma 9.5]{Ziegler95}. 

\begin{proposition} Let $P\subset\RR^n$ be a polytope. Suppose that $\vec v \in \RR^n$ induces a tight coherent subdivision of $P$. Then, the associated section $\triangle_{(P,\vec v)}$ is a cellular approximation of the diagonal of~$P$. 
\end{proposition}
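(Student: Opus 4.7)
The plan is to verify the three defining conditions of a cellular approximation of the diagonal: (i) $\triangle_{(P,\vec v)}$ coincides with $\triangle_P$ on vertices of $P$, (ii) it is homotopic to $\triangle_P$, and (iii) its image is a union of cells of $P\times P$.

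For (i), let $v$ be a vertex of $P$. Since $v$ is an extreme point, the equation $v=\tfrac12(x+y)$ with $x,y\in P$ forces $x=y=v$, so the fiber $\pi^{-1}(v)$ reduces to the single point $(v,v)$. Hence the unique minimizer of $\phi$ on this fiber is $(v,v)=\triangle_P(v)$.

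For (ii), both $\triangle_P$ and $\triangle_{(P,\vec v)}$ are sections of the affine projection $\pi$, so for every $z\in P$ the two points $\triangle_P(z)$ and $\triangle_{(P,\vec v)}(z)$ lie in the common convex fiber $\pi^{-1}(z)$. The straight-line homotopy
\[ H_t(z) = (1-t)\triangle_P(z) + t\,\triangle_{(P,\vec v)}(z) \]
then stays in $P\times P$ by convexity of the fiber, and provides the required homotopy between $\triangle_P$ (at $t=0$) and $\triangle_{(P,\vec v)}$ (at $t=1$).

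For (iii), which is the crux of the argument, the tightness hypothesis is exploited as follows. The subdivision $\pi(\mathcal{F}^\phi)$ covers $P$ by the projections $\pi(F\times G)$ of the maximal-dimensional elements $F\times G$ of $\mathcal{F}^\phi$. By tightness, $\dim(F\times G)=\dim\pi(F\times G)=\dim P$, so each $\pi|_{F\times G}$ is an affine bijection onto its image. Since $\pi^\phi(F\times G)$ is a lower face of $P^\phi$, the values $\phi(x,y)$ for $(x,y)\in F\times G$ are fiberwise minima of $\phi$, and tightness makes them the unique minimizers. It follows that $\triangle_{(P,\vec v)}(\pi(F\times G))=F\times G$, a face of $P\times P$ of dimension $\dim P$, and hence $\triangle_{(P,\vec v)}(P)=\bigcup_{F\times G}F\times G$ is a union of faces of $P\times P$. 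The main obstacle lies here, in turning the combinatorial hypothesis of tight coherent subdivision into the geometric statement that the fiberwise minimizers assemble into a continuous piecewise-affine section whose image is a union of faces; this is handled by the fiber polytope machinery of Billera--Sturmfels as developed in \cite[Chapter 9]{Ziegler95}, already invoked in the very definition of $\triangle_{(P,\vec v)}$.
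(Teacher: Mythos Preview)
Your proof is correct and follows the same approach as the paper: for (i) you both observe that the fiber over a vertex is a single point, and for (ii) you both use the straight-line homotopy within the convex fibers of $\pi$. The only difference is that you additionally spell out the cellularity argument (iii), whereas the paper treats cellularity as already built into the definition of $\triangle_{(P,\vec v)}$ via the reference to \cite[Lemma~9.5]{Ziegler95}; your explicit verification that tightness forces $\pi|_{F\times G}$ to be a bijection onto $\pi(F\times G)$, and hence that the section sends each cell of the subdivision onto a face of $P\times P$, is a welcome clarification but not a departure in method.
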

\begin{proof} If $z$ is a vertex of $P$, then the fiber $\pi^{-1}(z)$ is the point $(z,z)$, so $\triangle_{(P,\vec v)}$ agrees with the set-theoretic diagonal on vertices. An explicit homotopy between the two maps is given by 
\begin{equation*}
\begin{matrix}
        H & \text{:} & P\times [0,1] & \longrightarrow & P\times P   \\  
        & & (z, t) & \longmapsto & (1-t)(z,z)+ t(x,y) 
\end{matrix}
\end{equation*}
where $(x,y)$ is such that $\langle x-y, \vec v\rangle=\min\left\{\phi|_{\pi^{-1}(z)}\right\}$.
\end{proof}

\subsection{Cellular description of the diagonal} Given a cellular approximation $\triangle_{(P,\vec v)}$ of the diagonal of a polytope $P$, one key problem is to describe combinatorially its image.
For more clarity, let us first recall some standard notations.
Let $P\subset \RR^n$ be a polytope. 
Codimension $1$ faces of $P$ are called \emph{facets}. 
For a face $F \in \mathcal{L}(P)$, the \emph{normal cone} of $F$ is the cone 
\[\mathcal{N}_P(F)\coloneqq \left\{ c \in (\RR^n)^{*} \ \bigg | \ F \subseteq \{ x \in P \ | \ c x =\max_{y \in P} c y \}\right\}  \ . \]  
The codimension of $\mathcal{N}_P(F)$ is equal to the dimension of $F$. 
The \emph{normal fan} of $P$ is the collection of the normal cones $\mathcal{N}_P \coloneqq \{\mathcal{N}_P(F) \ | \ F \in \mathcal{L}(P)\setminus\emptyset \}$. 
This fan is complete, i.e. it is a partition of $(\RR^n)^{*}$.
From now on we see $\mathcal{N}_P$ as a subset of $\RR^n$ via the canonical identification $(\RR^n)^{*}\cong\RR^n$.

\begin{remark}
    When $P \subset \RR^n$ is full-dimensional, i.e. when $\dim P = n$, one can alternatively define the normal fan of $P$ via the lines generated by a family of normal vectors to the facets of $P$, called rays. 
    Given a polytope $P$ which is not full-dimensional, one can then consider the restriction to the affine hull of $P$, and define the normal fan in this space.  
    We have decided not to follow this approach, and consider always the normal fan to be full-dimensional.
\end{remark}

For $X$ a subset of $\RR^n$, the \emph{cone} of $X$ is defined by $\cone (X)\coloneqq \{\lambda_1 x_1 + \cdots + \lambda_n x_n \ | \ \{x_1,\ldots,x_n\}\subseteq X, \lambda_i\geq 0 \}$ and its \emph{polar cone} is defined by $X^{*}\coloneqq \{y \in \RR^n \ | \ \forall x \in X, \langle x, y \rangle \leq 0 \}$. 
The following result, which will be at the heart of further developments, applies to \emph{any} coherent subdivision of $P$.

\begin{proposition} \label{prop:coeurcoeur} Let $P$ be a polytope in $\RR^n$, let $\vec v \in \RR^n$ and let $F,G\in \mathcal{L}(P)$ be two faces of $P$. Then, \[ (F,G) \in \mathcal{F}^\phi \iff  \vec v \in \cone ( -\mathcal{N}_P(F)\cup\mathcal{N}_P(G)) \ . \]
\end{proposition}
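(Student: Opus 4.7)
The plan is to unravel the definition of $\mathcal{F}^\phi$ by rewriting the lower-face condition on $P^\phi$ in terms of linear functionals, pulling these back through $\pi^\phi$ to $P \times P$, and then exploiting the product structure to reduce everything to normal cones in $P$.

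First I would show that a face $F' \in \mathcal{L}(P^\phi)$ lies in $\mathcal{L}^{\downarrow}(P^\phi)$ if and only if its normal cone in $P^\phi$ contains some $c = (\bar c, c_{n+1})$ with $c_{n+1} < 0$. Indeed, if $x \in F' \subseteq \arg\max_{P^\phi} c$ and $\lambda > 0$, then $c(x - \lambda e_{n+1}) = c(x) - \lambda c_{n+1} > \max_{P^\phi} c$, so $x - \lambda e_{n+1} \notin P^\phi$; the converse follows by separating $P^\phi$ from the downward ray issuing from $F'$. Consequently $(F,G) \in \mathcal{F}^\phi$ is equivalent to the existence of a $c$ with $c_{n+1} < 0$ such that $F \times G \subseteq \arg\max_{P\times P}(c \circ \pi^\phi)$.

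Next I would pull back $c$ explicitly. A direct computation gives
\[
c\bigl(\pi^\phi(x,y)\bigr) = \tfrac{1}{2}\langle \bar c, x+y\rangle + c_{n+1}\langle \vec v, x-y\rangle = \langle a, x\rangle + \langle b, y\rangle,
\]
with $a := \tfrac{1}{2}\bar c + c_{n+1}\vec v$ and $b := \tfrac{1}{2}\bar c - c_{n+1}\vec v$. Since this functional separates in the two variables, its argmax on $P \times P$ equals $(\arg\max_P a) \times (\arg\max_P b)$, so the inclusion $F \times G \subseteq \arg\max_{P\times P}(c \circ \pi^\phi)$ translates to $a \in \mathcal{N}_P(F)$ and $b \in \mathcal{N}_P(G)$.

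Finally, setting $\mu := -c_{n+1} > 0$, the defining relations for $a$ and $b$ become $\bar c = a + b$ (which merely fixes $\bar c$) together with $\vec v = \tfrac{1}{2\mu}(b - a)$. Thus $(F,G) \in \mathcal{F}^\phi$ is equivalent to $\vec v$ being a positive multiple of some vector of the form $b - a$ with $a \in \mathcal{N}_P(F)$ and $b \in \mathcal{N}_P(G)$; since $-\mathcal{N}_P(F)$ and $\mathcal{N}_P(G)$ are both cones, this amounts to $\vec v \in \mathcal{N}_P(G) + (-\mathcal{N}_P(F)) = \cone(-\mathcal{N}_P(F) \cup \mathcal{N}_P(G))$. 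The main obstacle is the first step — a clean translation between the geometric notion of a lower face and the linear-algebraic condition $c_{n+1} < 0$ on some normal vector; once that bridge is in place, the rest is routine manipulation of separable linear forms and convex cones.
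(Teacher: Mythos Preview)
Your argument is correct and constitutes a genuinely different route from the paper's. The paper works from the \emph{primal} characterization of lower faces: it unfolds ``no point of $\pi^\phi(F\times G)$ has anything below it in $P^\phi$'' into the nonexistence of a direction $\vec w$ with $\langle \vec v,\vec w\rangle>0$ along which one can simultaneously push $x$ backward and $y$ forward inside $P$, and then uses the polar-cone identities $(-\mathcal{N}_P(F))^*\cap\mathcal{N}_P(G)^*=\cone(-\mathcal{N}_P(F)\cup\mathcal{N}_P(G))^*$ and bipolarity to reach the conclusion. You instead work from the \emph{dual} characterization of lower faces---the existence of a supporting functional $c$ with $c_{n+1}<0$---and then pull $c$ back through $\pi^\phi$, where the separable form $\langle a,x\rangle+\langle b,y\rangle$ lets you read off $a\in\mathcal{N}_P(F)$, $b\in\mathcal{N}_P(G)$, and $\vec v=\tfrac{1}{2\mu}(b-a)$ directly. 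Your approach avoids the polar-cone gymnastics entirely and makes the role of $\vec v$ as the ``difference direction'' transparent; the paper's approach, by contrast, needs no auxiliary lemma about normal cones of lower faces, since the primal condition is immediate from the definition. The one place your write-up is thin is exactly where you flag it: the converse in step~1 (every lower face admits a supporting $c$ with $c_{n+1}<0$) deserves a sentence more than ``separating $P^\phi$ from the downward ray''---the clean argument is that if no such $c$ existed, then $\mathcal{N}_{P^\phi}(F')\subseteq\{c_{n+1}\geq 0\}$, so its polar contains $-e_{n+1}$, meaning $-e_{n+1}$ is a feasible direction at any relative interior point of $F'$, contradicting the lower-face condition.
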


\begin{proof} Elaborating on the proof of \cite[Proposition 6]{MTTV19}, we have that 
\begin{eqnarray*}
    (F,G)\in \mathcal{F}^{\phi} & \iff & \nexists x\in F, y \in G, \lambda>0 \text{ such that } \left(\tfrac{1}{2}(x + y),\langle x- y, \vec v \rangle - \lambda \right) \in P^\phi \\
    & \iff & \nexists x \in F, y \in G, \vec w \in \RR^n, \varepsilon>0 \text{ such that } \langle \vec v , \vec w \rangle >0 \text{ and } (x-\varepsilon\vec w, y+\varepsilon\vec w) \in P\times P \\
    & \iff & \nexists \vec w \in \RR^n \text{ such that } \langle \vec v , \vec w \rangle >0 \text{ and } \vec w \in -\mathcal{N}_P(F)^{*} \cap \mathcal{N}_P(G)^{*} \\
    & \iff & \forall \vec w \in \cone (-\mathcal{N}_P(F)\cup\mathcal{N}_P(G))^{*} \text{ we have } \langle \vec v , \vec w \rangle \leq 0 \\
    &\iff& \cone (-\mathcal{N}_P(F)\cup\mathcal{N}_P(G))^{*} \subset \cone (\vec v)^{*} \\
    & \iff & \cone (\vec v) \subset  \cone (-\mathcal{N}_P(F)\cup\mathcal{N}_P(G)) \\ 
    & \iff & \vec v \in  \cone (-\mathcal{N}_P(F)\cup\mathcal{N}_P(G)) \ ,
\end{eqnarray*}
where we used that for $X,Y$ two subsets of $\RR^n$, we have $\cone(X)^{*} \cap \cone(Y)^{*}=\cone(X\cup Y)^{*}$ and $\cone(X)^{*} \subset \cone(Y)^{*} \iff \cone(Y)\subset \cone (X)$. 
\end{proof}

\begin{corollary} 
    \label{corollary:perturbation} 
    For all $\varepsilon>0$, we have
\begin{eqnarray*}
    (F,G)\in \mathcal{F}^\phi &\iff& (\mathcal{N}_P(F)+\varepsilon\vec v)\cap \mathcal{N}_P(G)\neq\emptyset  \\
    &\iff& \mathcal{N}_P(F)\cap (\mathcal{N}_P(G)-\varepsilon\vec v) \neq\emptyset  \ . 
\end{eqnarray*}
Moreover, if $P$ is full-dimensional and if the coherent subdivision $\pi(\mathcal{F}^\phi)$ is tight, then the pairs $(F,G)\in\mathcal{F}^\phi$ which satisfy $\dim F + \dim G=\dim P$ are in bijection with the dimension zero cells of $(\mathcal{N}_P\pm\varepsilon\vec v) \cap \mathcal{N}_P$.
\end{corollary}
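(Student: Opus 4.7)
The plan is to derive all three equivalences as immediate algebraic consequences of \cref{prop:coeurcoeur}, and to obtain the bijection in the moreover part from a dimension count relying on tightness.

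First I would record two cone-theoretic facts. Since each normal cone $\mathcal{N}_P(F)$ is already a convex cone closed under positive scaling, the set $\cone(-\mathcal{N}_P(F) \cup \mathcal{N}_P(G))$ coincides with the Minkowski sum $-\mathcal{N}_P(F) + \mathcal{N}_P(G)$. Consequently, saying that $\vec v$ lies in it amounts to finding $\alpha \in \mathcal{N}_P(F)$ and $\beta \in \mathcal{N}_P(G)$ with $\beta = \alpha + \vec v$, i.e.\ $(\mathcal{N}_P(F) + \vec v) \cap \mathcal{N}_P(G) \neq \emptyset$. Second, this Minkowski sum is itself a cone, so $\vec v$ belongs to it if and only if $\varepsilon \vec v$ does, for every $\varepsilon > 0$. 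Applying the first observation with $\varepsilon \vec v$ in place of $\vec v$ and combining with \cref{prop:coeurcoeur} then gives the first stated equivalence, and translating both operands of the intersection by $-\vec v$ gives the second.

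For the moreover part, I would use the standard identity $\dim \mathcal{N}_P(F) = n - \dim F$, valid when $P$ is full-dimensional, so that the hypothesis $\dim F + \dim G = \dim P$ becomes $\dim \mathcal{N}_P(F) + \dim \mathcal{N}_P(G) = n$. By the first part of the statement, the pair $(F,G)$ lies in $\mathcal{F}^\phi$ exactly when $\mathcal{N}_P(F) + \varepsilon \vec v$ meets $\mathcal{N}_P(G)$. Tightness of the subdivision forces the non-empty intersections of such translated normal cones to have the expected dimension, so in the top case the intersection has codimension exactly $n$ and is therefore a single point. Conversely, every $0$-dimensional cell of the refined arrangement $(\mathcal{N}_P + \varepsilon \vec v) \cap \mathcal{N}_P$ lies in the relative interior of a unique translated cone $\mathcal{N}_P(F) + \varepsilon \vec v$ and a unique original cone $\mathcal{N}_P(G)$, determining the pair $(F,G)$ uniquely and providing the inverse of the bijection.

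The main obstacle I expect is not the manipulation itself, which is routine, but rather using tightness cleanly in the moreover part: one must rule out the possibility that a non-empty intersection of cones of complementary dimension forms a half-line or some other positive-dimensional set. I would handle this by translating tightness $\dim(F \times G) = \dim \pi(F \times G)$ into the corresponding transversality statement on normal cones and unwinding how cells of $(\mathcal{N}_P + \varepsilon \vec v) \cap \mathcal{N}_P$ correspond to faces of the coherent subdivision $\pi(\mathcal{F}^\phi)$.
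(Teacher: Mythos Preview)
Your proposal is correct and follows essentially the same route as the paper: the first part is exactly the paper's argument (rewriting $\cone(-\mathcal{N}_P(F)\cup\mathcal{N}_P(G))$ as the Minkowski sum $-\mathcal{N}_P(F)+\mathcal{N}_P(G)$ and using invariance under positive scaling), and the moreover part is handled in both cases by the same dimension count, invoking tightness to rule out $\dim F+\dim G>\dim P$. The paper's proof of the moreover clause is in fact terser than yours---it only spells out the direction from $0$-cells to pairs---so your additional remarks on the inverse map and on the transversality obstacle are reasonable elaborations rather than departures.
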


\begin{proof} The first part of the statement follows directly from \cref{prop:coeurcoeur} : for $\varepsilon>0$, we have by definition of a cone that the inclusion $\cone (\vec v) \subset \cone (-\mathcal{N}_P(F)\cup\mathcal{N}_P(G))$ holds if and only if $\varepsilon \vec v \in \cone (-\mathcal{N}_P(F)\cup\mathcal{N}_P(G))$. 
This is equivalent to the existence of $\lambda\geq 0,\mu \geq 0, f\in \mathcal{N}_P(F)$ and $g \in \mathcal{N}_P(G)$ such that $-\lambda f + \mu g = \varepsilon \vec v$, which proves the claim. For the second part of the statement, if a pair of faces $(F,G)\in \mathcal{F}^\phi$ verifies $\dim((\mathcal{N}_P(F)+\varepsilon\vec v)\cap \mathcal{N}_P(G))=0$, then we have $\dim \mathcal{N}_P(F) + \dim \mathcal{N}_P(G) \leq \dim P$ since $P$ is full-dimensional, so we have $\dim F + \dim G \geq \dim P$. In the case where the subdivision is tight, we must have $\dim F + \dim G =\dim P$, otherwise we would have $\dim(\pi(F\times G))=\dim(F\times G)=\dim F+\dim G> \dim P$, which is impossible since $\Ima(\pi)=P$.
\end{proof}
\cref{corollary:perturbation} is a "perturbative" way of seeing \cref{prop:coeurcoeur} : the pairs of $\mathcal{F}^\phi$ arise as intersections of the normal fan of $P$ with a translated copy of itself in the direction of $\vec v$, see \cref{figure:perturbation}.

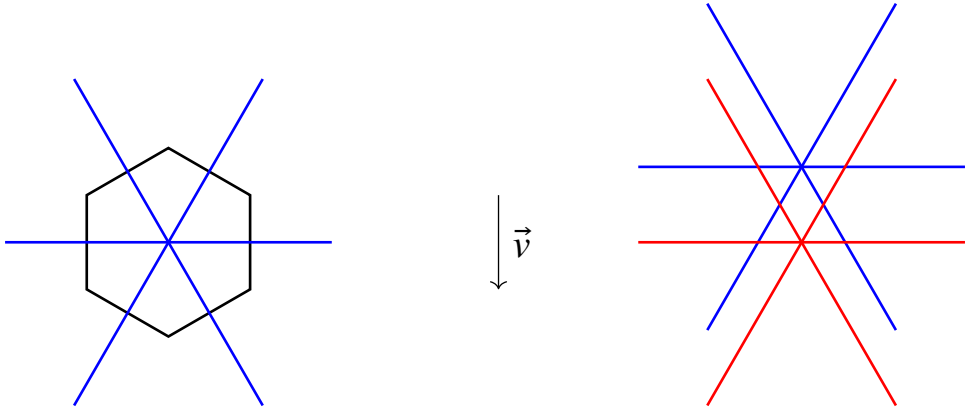
\begin{figure}[h!]
\centering
\resizebox{0.8\linewidth}{!}{
\begin{tikzpicture}
        \draw[thick] (0.866,0.5)--(0,1)--(-0.866,0.5)--(-0.866,-0.5)--(0,-1)--(0.866,-0.5)--cycle;
        \draw[thick,blue] (-1,-1.732)--(1,1.732);
        \draw[thick,blue] (1,-1.732)--(-1,1.732);
        \draw[thick,blue] (-1.732,0)--(1.732,0);
        \draw[->] (3.5,0.5)--(3.5,-0.5);
        \draw (3.5,0) node[right] {$\vec v$};
\end{tikzpicture}
\quad \quad 
\begin{tikzpicture}
    \draw[thick,blue] (-1,-1.732)--(1,1.732);
    \draw[thick,blue] (1,-1.732)--(-1,1.732);
    \draw[thick,blue] (-1.732,0)--(1.732,0);
    \draw[thick,red] (-1,-1.732-0.8)--(1,1.732-0.8);
    \draw[thick,red] (1,-1.732-0.8)--(-1,1.732-0.8);
    \draw[thick,red] (-1.732,0-0.8)--(1.732,0-0.8);
\end{tikzpicture}}
\caption{The normal fan $\mathcal{N}_P$ of the 2-dimensional permutahedron (in blue), and its perturbed copy $\mathcal{N}_P+\varepsilon\vec v$ (in red).}
\label{figure:perturbation}
\end{figure}

By definition, the coherent subdivision $\pi(\mathcal{F}^\phi)$ of $P$ is given by union of the polytopes $(F+G)/2$, for all the pairs of faces $(F,G)\in\mathcal{F}^\phi$. 
In the case where $P$ is full-dimensional, the dual cell decomposition of $\pi(\mathcal{F}^\phi)$ is then isomorphic to $(\mathcal{N}_P+\varepsilon\vec v) \cap \mathcal{N}_P$, see \cref{fig:dualcell}.

\begin{figure}[h!]
\centering
\resizebox{0.5\linewidth}{!}{
\begin{tikzpicture}[xscale=0.5, yscale=0.4]
        
\draw[thick,black] (0,3)--(-2,1)--(-2,-1)--(0,-3)--(2,-1)--(2,1)--cycle;

\draw[thick,gray] (0,-3)--(-2,-5)--(-2,-7)--(0,-9)--(2,-7)--(2,-5)--cycle;
        
\draw[thick,gray] (2,-1)--(4,-3);
\draw[thick,gray] (2,1)--(4,-1);
\draw[thick,black] (2,-5)--(4,-3);
\draw[thick,black] (4,-1)--(4,-3);
\draw[thick,black] (2,-7)--(4,-5);
\draw[thick,gray] (4,-5)--(4,-3);

\draw[thick,gray] (-2,-1)--(-4,-3);
\draw[thick,gray] (-2,1)--(-4,-1);
\draw[thick,black] (-2,-5)--(-4,-3);
\draw[thick,black] (-4,-1)--(-4,-3);
\draw[thick,black] (-2,-7)--(-4,-5);
\draw[thick,gray] (-4,-5)--(-4,-3);

\draw[thick,blue] (-5,-2)--(-4,-2)--(-2,0)--(2,0)--(4,-2)--(5,-2);
\draw[thick,blue] (-3,-8)--(-3,-4)--(-2,-3)--(0,0)--(3,4);
\draw[thick,blue] (3,-8)--(3,-4)--(2,-3)--(0,0)--(-3,4);

\draw[thick,red] (-5,-4)--(-4,-4)--(-2,-6)--(2,-6)--(4,-4)--(5,-4);
\draw[thick,red] (-3,2)--(-3,-2)--(-2,-3)--(0,-6)--(3,-10);
\draw[thick,red] (3,2)--(3,-2)--(2,-3)--(0,-6)--(-3,-10);

\node at (0,-0.1) {\textbullet} ;
\node at (0,-6.1) {\textbullet} ;
\node at (2,-3.1) {\textbullet} ;
\node at (3,-1.1) {\textbullet} ;
\node at (3,-5.1) {\textbullet} ;
\node at (-2,-3.1) {\textbullet} ;
\node at (-3,-1.1) {\textbullet} ;
\node at (-3,-5.1) {\textbullet} ;

\draw[->] (6.5,-1.5)--(6.5,-4.5);
\draw (7.5,-3) node[right] {$\vec v$};
\end{tikzpicture}}
\caption{A tight coherent subdivision of the 2-dimensional permutahedron and its dual cell decomposition.}
\label{fig:dualcell}
\end{figure}
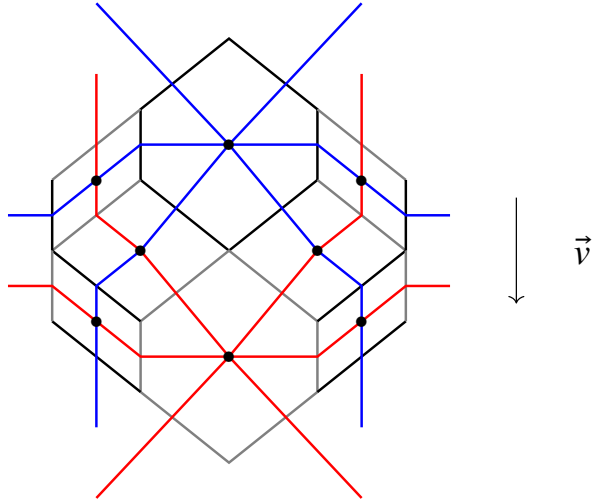

\begin{remark} \label{rem:Abouzaid}
In the case of the simplices, one recovers via \cref{corollary:perturbation} the classical equivalence between the cup product on the simplicial cochains of a triangulation of a manifold and the intersection pairing on cellular chains, as described by M. Abouzaid in \cite[Appendix E]{Abouzaid09}. 
We denote by $(e_i)_{1\leq i\leq n}$ the standard basis of $\RR^n$ and we set $e_0\coloneqq 0$. 
Let us consider the following full-dimensional realization of the $n$-simplex \[\Delta^n \coloneqq\conv (\{e_i \ | \ 0\leq i\leq n \}) \ . \] The rays of the normal fan $\mathcal{N}_{\triangle^n}$ are generated by the vectors $\{-e_i \ | \ 1\leq i \leq n\}$ and $(1,\ldots,1)$. We fix some $\varepsilon>0$, and define $\vec v=(\varepsilon/n,\ldots,\varepsilon/2,\varepsilon)$. Then, \cite[Lemma E.4]{Abouzaid09} shows that the non-empty intersections of $\mathcal{N}_{\Delta^n}\cap (\mathcal{N}_{\triangle^n}-\varepsilon \vec v)$ coincide with the formula for the Alexander-Whitney map. By means of \cref{corollary:perturbation}, this is exactly what we would obtain by proving that $\vec v$ induces a tight coherent subdivision of $\Delta^n$.
\end{remark}

\begin{remark}
\label{rem:FultonSturmfels}
\cref{corollary:perturbation} seems to be very closely related to the Fulton--Sturmfels formula, or "fan displacement rule" \cite[Theorem 4.2]{FultonSturmfels97}, which plays a key role in the intersection theory of toric varieties.
One of our future goals is to relate precisely the two constructions.
\end{remark}

We aim now at giving a geometric meaning to the cone $\cone(-\mathcal{N}_P(F)\cup\mathcal{N}_P(G))$ that appears in \cref{prop:coeurcoeur}.
We denote by $\mathring P$ the \emph{relative interior} of a polytope $P$, that is the interior of $P$ with respect to its embedding into its affine hull. 

\begin{lemma}\label{lemma:intersection} Let $P,Q\subset\RR^n$ be two polytopes. There is a bijection \[\mathcal{L}(P\cap Q)\cong \{(F,G)\in\mathcal{L}(P)\times\mathcal{L}(Q) \ | \ \mathring F \cap \mathring G \neq \emptyset \} \ . \] Moreover, for any face $F\cap G\in\mathcal{L}(P\cap Q)$, we have \[\mathcal{N}_{P\cap Q}(F\cap G)=\cone(\mathcal{N}_P(F)\cup\mathcal{N}_Q(G)) \ . \]
\end{lemma}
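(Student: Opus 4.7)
The plan is to prove the bijection first, using that a polytope is the disjoint union of the relative interiors of its faces, and then derive the normal cone formula via tangent cone duality.

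For the bijection, the map $(F, G) \mapsto F \cap G$ is well-defined: if $F$ and $G$ are respectively the maximizer faces of linear functionals $\ell_P$ on $P$ and $\ell_Q$ on $Q$, then $F \cap G$ is the maximizer face of $\ell_P + \ell_Q$ on $P \cap Q$, nonempty by the hypothesis $\mathring F \cap \mathring G \neq \emptyset$. The inverse sends $H \in \mathcal{L}(P \cap Q)$ to the pair $(F, G)$ where $F$ (resp. $G$) is the smallest face of $P$ (resp. $Q$) containing $H$. The key step is to show $\mathring H \subseteq \mathring F \cap \mathring G$: fix $x \in \mathring H$ and let $F_x$ be the unique face of $P$ with $x \in \mathring{F_x}$; for any $y \in \mathring H$, relative openness of $\mathring H$ lets us write $x$ as an interior convex combination of $y$ and some $x' = x - \varepsilon(y-x) \in \mathring H \subseteq P$, and the extremality of the face $F_x$ then forces $y \in F_x$. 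Hence $\mathring H \subseteq F_x$, so by minimality $F \subseteq F_x$; combined with $F_x \subseteq F$ (since $F \supseteq H \ni x$), this gives $F = F_x$ and $x \in \mathring F$. That the two maps are mutually inverse then follows because any $x \in \mathring F \cap \mathring G$ lies in the relative interior of $F \cap G$ by Rockafellar's identity $\mathring F \cap \mathring G = \mathring{(F \cap G)}$, and the relative interiors of distinct faces of $P \cap Q$ are disjoint, forcing $F \cap G = H$.

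For the normal cone formula, I pass to tangent cones. For any $x \in \mathring F$, one has $\mathcal{N}_P(F) = T_P(x)^{\circ}$, where $T_P(x)$ is the tangent cone of $P$ at $x$ and $\circ$ denotes the polar. Since $x \in \mathring F \cap \mathring G$ lies in the relative interior of both $F$ and $G$, a direction is feasible for $P \cap Q$ at $x$ precisely when it is feasible for both $P$ and $Q$, so $T_{P \cap Q}(x) = T_P(x) \cap T_Q(x)$. Polar duality for polyhedral cones, whose Minkowski sums are automatically closed, gives $(T_P(x) \cap T_Q(x))^\circ = T_P(x)^\circ + T_Q(x)^\circ$, which equals $\cone(\mathcal{N}_P(F) \cup \mathcal{N}_Q(G))$ since the Minkowski sum of two cones coincides with the cone of their union.

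The main obstacle is the inclusion $\mathring H \subseteq \mathring F$ in the construction of the inverse map, which requires exploiting the extremality of faces rather than the mere inclusion $H \subseteq F$. Once this geometric step is secured, both the bijection and the normal cone formula follow cleanly from standard convex-analytic facts.
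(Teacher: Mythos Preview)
Your proof is correct and more detailed than the paper's own argument. Both proofs construct the inverse map $H \mapsto (F,G)$ by locating a point $x \in \mathring H$ inside the face stratifications of $P$ and $Q$; the paper does this by listing the facet-supporting hyperplanes of $P$ and of $Q$ that contain $x$, while you use the equivalent smallest-face characterization together with an explicit extremality argument. The approaches diverge more for the normal cone identity: the paper simply observes that the normal cone of a face is spanned by the outward normals of the facets containing it (together with a basis of the orthogonal complement of the affine hull), and reads off the result from the combined halfspace description of $P \cap Q$; you instead pass to tangent cones, use $T_{P\cap Q}(x) = T_P(x) \cap T_Q(x)$, and apply polar duality $(C_1 \cap C_2)^\circ = C_1^\circ + C_2^\circ$ for polyhedral cones. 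Your route is cleaner from the convex-analytic standpoint and makes the logical dependencies explicit; the paper's route is terser but leaves more of the halfspace bookkeeping to the reader.
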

\begin{proof}
Any polytope $P\subset \RR^n$ is a bounded intersection of facet-defining closed halfspaces, one for each facet of $P$, and of the affine hull of $P$. Each halfspace has a support hyperplane. Let $x$ be a point in the interior of a face of $P\cap Q$. Then $x$ is in the support hyperplanes $H_i$ of $P$ for a certain subset $I$ and also in the support hyperplanes $H_j$ of $Q$ for a certain subset $J$. Thus $x$ is in the face $F$ of $P$ defined by the $H_i$ and in the face $G$ of $Q$ defined by the $H_j$. For the second part of the statement, we observe that the normal cone $\mathcal{N}_P(F)$ of a face $F$ is spanned by the normal vectors of the support hyperplanes defining that face and any basis of the orthogonal complement of $P$ in $\RR^n$, and the result follows.
\end{proof}

\begin{definition} Let $P\subset \RR^n$ be a polytope. For $z \in P$, we denote by $\rho_z P \coloneqq 2z-P$ the reflection of $P$ with respect to $z$, see \cref{fig:permutoreflex}.
\end{definition}

\begin{figure}[h!]
    \centering
    \includegraphics[width=0.6\linewidth]{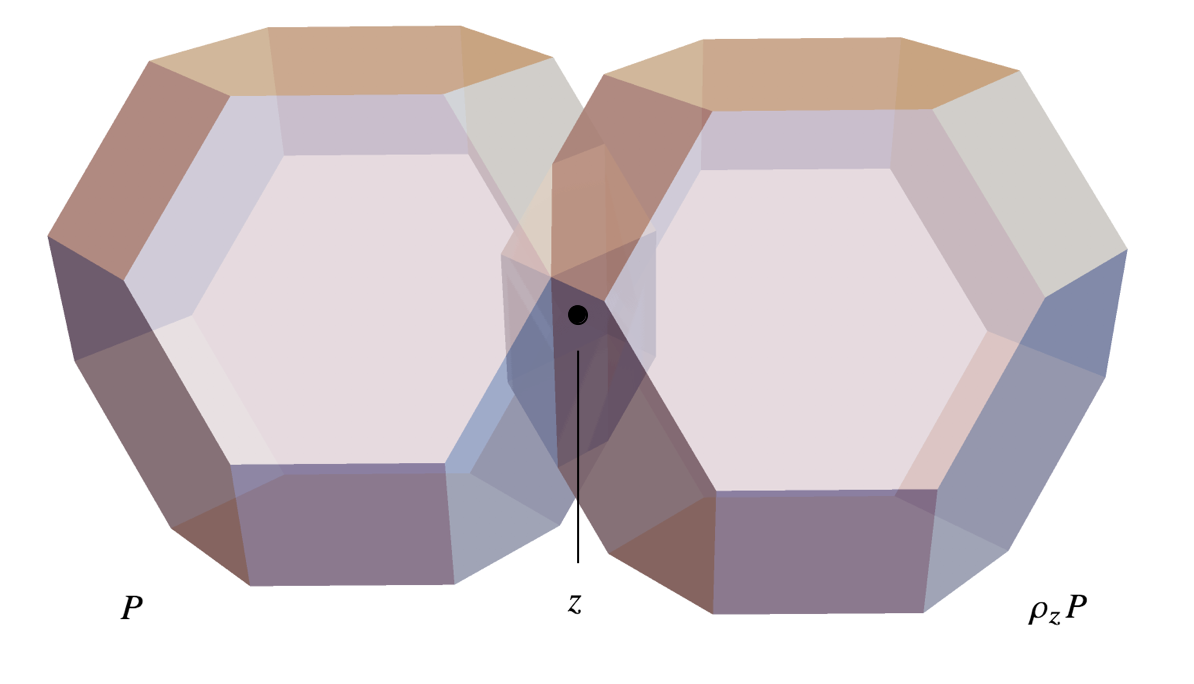} 
    \caption{A 3-dimensional permutahedron $P$, its reflection $\rho_z P$ and the intersection $P\cap \rho_z P$. }
    \label{fig:permutoreflex}
\end{figure}

\begin{proposition} \label{prop:normalintersection} Let $P\subset \RR^n$ be a polytope, and let $F,G$ be two faces of $P$. For any  $z,w\in (\mathring F + \mathring G)/2$, we have \[\mathcal{N}_{P\cap\rho_zP}(G\cap\rho_z F)=\mathcal{N}_{P\cap\rho_wP}(G\cap\rho_w F)=\cone(-\mathcal{N}_P(F)\cup\mathcal{N}_P(G)) \ . \]
\end{proposition}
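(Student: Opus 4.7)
The plan is to reduce everything to \cref{lemma:intersection} applied to the pair $(P, \rho_zP)$, after identifying the normal cones of $\rho_zP$ in terms of those of $P$.

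\textbf{Step 1: normal cones of the reflection.} I first observe that the reflection $\rho_z \colon \RR^n \to \RR^n$, $x \mapsto 2z-x$, sends the face lattice of $P$ bijectively onto that of $\rho_zP$, with $\rho_zF$ a face of $\rho_zP$ for every face $F$ of $P$. Since $\langle c, 2z-x\rangle = 2\langle c,z\rangle - \langle c,x\rangle$, the linear form $c$ is maximized on $\rho_zF$ in $\rho_zP$ precisely when $-c$ is maximized on $F$ in $P$, which gives
\[
\mathcal{N}_{\rho_zP}(\rho_zF) \;=\; -\mathcal{N}_P(F).
\]

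\textbf{Step 2: nonempty interior intersection.} Next I verify the hypothesis of \cref{lemma:intersection} for the faces $G \in \mathcal{L}(P)$ and $\rho_zF \in \mathcal{L}(\rho_zP)$. By definition of the Minkowski sum, any $z \in (\mathring F + \mathring G)/2$ can be written as $z = (f+g)/2$ with $f \in \mathring F$ and $g \in \mathring G$. Then $\rho_zf = 2z - f = g$, so since reflection preserves relative interiors we have
\[
g \;=\; \rho_zf \;\in\; \rho_z(\mathring F) \cap \mathring G \;=\; \mathring{\rho_zF}\cap \mathring G,
\]
which is therefore nonempty. The same holds for $w$.

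\textbf{Step 3: conclusion via \cref{lemma:intersection}.} By \cref{lemma:intersection} applied to $P$ and $\rho_zP$, the intersection $G \cap \rho_zF$ is a face of $P \cap \rho_zP$ with
\[
\mathcal{N}_{P\cap\rho_zP}(G\cap \rho_zF) \;=\; \cone\bigl(\mathcal{N}_P(G) \cup \mathcal{N}_{\rho_zP}(\rho_zF)\bigr) \;=\; \cone\bigl(\mathcal{N}_P(G) \cup (-\mathcal{N}_P(F))\bigr),
\]
using Step 1 at the last equality. The right-hand side is manifestly independent of the choice of $z \in (\mathring F + \mathring G)/2$; applying the same reasoning to $w$ yields the equality $\mathcal{N}_{P\cap\rho_zP}(G\cap\rho_zF) = \mathcal{N}_{P\cap\rho_wP}(G\cap\rho_wF) = \cone(-\mathcal{N}_P(F)\cup \mathcal{N}_P(G))$ claimed in the statement.

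The only non-routine point is Step 2, ensuring that the pair $(G,\rho_zF)$ genuinely indexes a face of the intersection polytope, and this is exactly where the hypothesis $z \in (\mathring F + \mathring G)/2$ is used; the rest follows mechanically from the intersection lemma and the symmetry $\mathcal{N}_{\rho_zP}(\rho_zF) = -\mathcal{N}_P(F)$.
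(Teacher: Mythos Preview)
Your proof is correct and follows essentially the same approach as the paper: apply \cref{lemma:intersection} to $P$ and $\rho_zP$, using the identity $\mathcal{N}_{\rho_zP}(\rho_zF)=-\mathcal{N}_P(F)$. Your Step~2 makes explicit the verification that $\mathring{\rho_zF}\cap\mathring G\neq\emptyset$, which the paper leaves implicit but which is indeed needed for the lemma to apply.
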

\begin{proof} The result follows directly from the application of \cref{lemma:intersection} to the intersection $P\cap\rho_z P$, and the fact that for any face $F$ of $P$ and any $z\in P$ we have $\mathcal{N}_{\rho_zP}(\rho_z F)=-\mathcal{N}_P(F)$. 
\end{proof}

\begin{corollary} Let $P\subset\RR^n$ be a polytope, let $\vec v\in \RR^n$. For two faces $F,G$ of $P$, we have 
\begin{eqnarray*}
    (F,G)\in\mathcal{F}^\phi & \iff & \forall z \in \tfrac{\mathring F + \mathring G}{2}, \ \vec v \in \mathcal{N}_{P\cap\rho_zP}(G\cap\rho_z F) \\
    &\iff& \exists z \in \tfrac{\mathring F + \mathring G}{2}, \ \vec v \in \mathcal{N}_{P\cap\rho_zP}(G\cap\rho_z F)  \ . 
\end{eqnarray*}   
\end{corollary}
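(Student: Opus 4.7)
The plan is to simply chain together the two preceding results and observe that the conclusion of \cref{prop:normalintersection} immediately makes the two variants (``$\forall z$'' and ``$\exists z$'') equivalent to one another, hence both equivalent to the characterization of $(F,G)\in \mathcal{F}^\phi$ supplied by \cref{prop:coeurcoeur}.

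More concretely, I would proceed in two short steps. First, invoke \cref{prop:coeurcoeur} to translate membership in $\mathcal{F}^\phi$ into the geometric condition $\vec v \in \cone(-\mathcal{N}_P(F)\cup \mathcal{N}_P(G))$. Second, apply \cref{prop:normalintersection}, which identifies this cone with the normal cone $\mathcal{N}_{P\cap \rho_z P}(G\cap \rho_z F)$ for \emph{every} point $z \in (\mathring F + \mathring G)/2$. Since the right-hand side of the identification in \cref{prop:normalintersection} does not depend on $z$, the predicate ``$\vec v\in \mathcal{N}_{P\cap \rho_z P}(G\cap \rho_z F)$'' either holds for all such $z$ or for none, so the universal and existential statements coincide, and both are equivalent to the cone condition obtained in the first step.

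The only mild point to check is that $(\mathring F + \mathring G)/2$ is nonempty, which is immediate since $F$ and $G$ are nonempty faces so their relative interiors are nonempty, and the Minkowski average of two nonempty sets is nonempty. I do not anticipate any real obstacle: the content of the corollary is already concentrated in \cref{prop:coeurcoeur} and \cref{prop:normalintersection}, and this statement is simply their juxtaposition, packaged in a form that gives a direct geometric meaning to the pairs of faces appearing in the coherent subdivision $\pi(\mathcal{F}^\phi)$.
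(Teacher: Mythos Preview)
Your proposal is correct and matches the paper's own proof, which simply states that the result is obtained by combining \cref{prop:coeurcoeur} and \cref{prop:normalintersection}. Your additional observation that the $z$-independence of the cone in \cref{prop:normalintersection} is what makes the ``$\forall z$'' and ``$\exists z$'' versions coincide (and your remark that $(\mathring F+\mathring G)/2$ is nonempty) makes explicit precisely the content the paper leaves implicit.
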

\begin{proof} The result is obtained by combining \cref{prop:coeurcoeur,prop:normalintersection}.
\end{proof}

\subsection{Pointwise description of the diagonal} We are interested in answering the following question: which choice of vector $\vec v$ gives a tight coherent subdivision of $P$?

\begin{definition}[Quasi-oriented polytope] A polytope $P\subset \RR^n$ is \emph{quasi-oriented} by $\vec v \in \RR^n$ if the linear form $\langle -, \vec v \rangle$ has a unique minimal element $\bm(P)$ and a unique maximal element $\tp(P)$ in $P$. 
\end{definition}

\begin{definition}[Oriented polytope] \label{def:orientedpolytope} A polytope $P\subset\RR^n$ is \emph{oriented} by $\vec v\in\RR^n$ if $\vec v$ is not perpendicular to any edge of $P$. 
\end{definition}

An orientation vector induces a poset on the vertices of $P$, for which the oriented 1-skeleton of $P$ is the Hasse diagram. Dually, it corresponds to a poset structure on the maximal cones of the normal fan $\mathcal{N}_P$. We observe that if $P$ is oriented by $\vec v$, then so is any face of $P$. 
Any oriented polytope is quasi-oriented, but the converse in not true in general. Consider the 3-dimensional cross-polytope $\lozenge_3\coloneqq\conv(e_1,-e_1,e_2,-e_2,e_3,-e_3)$, and choose $\vec v\coloneqq e_3$. Then, $(\lozenge_3,\vec v)$ is quasi-oriented but not oriented, since $\vec v$ is perpendicular to the four edges contained in the $xy$-plane. 

\begin{definition}[Positively and quasi-positively oriented polytope] 
    A polytope $P\subset\RR^n$ is \emph{positively oriented} (resp. \emph{quasi-positively oriented}) by $\vec v\in \RR^n$ if for any $z\in P$, the intersection $P\cap \rho_z P$ is oriented (resp. quasi-oriented) by $\vec v$. 
\end{definition}

Any positively oriented polytope is quasi-positively oriented, but the converse is not true in general, see \cref{example:pyramid}.
We note that any quasi-positively oriented polytope is also oriented. To see this, let $e$ be an edge from a vertex $x$ to a vertex $y$ in $P$, and set $z:= (x + y)/2$. Then $P\cap\rho_z P=e$ is quasi-oriented by $\vec v$, so $\vec v$ is not perpendicular to $e$.

\begin{proposition} \label{prop:quasiposifftight} Let $P$ be a polytope. Then, \[ (P,\vec v) \text{ is quasi-positively oriented } \iff \pi(\mathcal{F}^\phi) \text{ is tight. }  \]
\end{proposition}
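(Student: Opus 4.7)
The plan is to express both conditions as statements about the face structure of the intersections $P\cap\rho_zP$ for varying $z\in P$, then match them via \cref{prop:coeurcoeur} and \cref{prop:normalintersection}. As a preliminary reformulation, since $\pi(F\times G)=(F+G)/2$, tightness amounts to requiring $\dim F + \dim G = \dim(F+G)$ for every $(F,G)\in\mathcal{F}^\phi$. Writing $L(F)\subseteq\RR^n$ for the linear subspace parallel to $F$, this is equivalent to $L(F)\cap L(G)=\{0\}$. Moreover, for any $(F,G)\in\mathcal{F}^\phi$ and any $z=(x+y)/2$ with $x\in\mathring F$ and $y\in\mathring G$, the point $x$ lies in $\mathring F\cap\mathring{\rho_zG}$ and the point $y$ lies in $\mathring G\cap\mathring{\rho_zF}$, so \cref{lemma:intersection} exhibits both $F\cap\rho_zG$ and $G\cap\rho_zF$ as faces of $P\cap\rho_zP$ of dimension $\dim(L(F)\cap L(G))$. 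Hence tightness is equivalent to requiring $G\cap\rho_zF$ to be a vertex for every $(F,G)\in\mathcal{F}^\phi$ and every such $z$. On the other hand, quasi-positive orientation says that, for every $z\in P$, both the maximum and minimum of $\langle-,\vec v\rangle$ on $P\cap\rho_zP$ are attained at single vertices.

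For the implication quasi-positive $\Rightarrow$ tight, I would take $(F,G)\in\mathcal{F}^\phi$, choose $z\in(\mathring F+\mathring G)/2$, and combine \cref{prop:coeurcoeur} with \cref{prop:normalintersection} to obtain $\vec v\in\mathcal{N}_{P\cap\rho_zP}(G\cap\rho_zF)$. Thus the nonempty face $G\cap\rho_zF$ is contained in the max face of $\langle-,\vec v\rangle$ on $P\cap\rho_zP$; quasi-positivity forces that max face, and hence $G\cap\rho_zF$ itself, to be a vertex, giving tightness at $(F,G)$.

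For the converse, I would fix $z\in P$ and use \cref{lemma:intersection} to write the max face of $\langle-,\vec v\rangle$ on $P\cap\rho_zP$ as $F\cap\rho_zG$ for some $F,G\in\mathcal{L}(P)$ with $z\in(\mathring F+\mathring G)/2$. Reading $\vec v\in\mathcal{N}_{P\cap\rho_zP}(F\cap\rho_zG)$ through \cref{prop:normalintersection} and \cref{prop:coeurcoeur} with the roles of $F$ and $G$ swapped yields $(G,F)\in\mathcal{F}^\phi$; tightness then forces $\dim(F\cap\rho_zG)=\dim(L(F)\cap L(G))=0$, so the max face is a vertex. Applying the same argument to $-\vec v$ handles the minimum face.

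The main technical nuisance will be the sign bookkeeping between \cref{prop:coeurcoeur} and \cref{prop:normalintersection}: the cone $\cone(-\mathcal{N}_P(F)\cup\mathcal{N}_P(G))$ corresponds to the face $G\cap\rho_zF$ rather than $F\cap\rho_zG$, so the roles of $F$ and $G$ must be swapped at the right moment in the converse direction in order to land in $\mathcal{F}^\phi$ and apply tightness.
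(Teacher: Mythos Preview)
Your argument is correct, but it takes a genuinely different route from the paper's proof. The paper argues directly via the fibers: for each $z\in P$, the fiber $\pi^{-1}(z)=\{(x,y)\in P\times P\mid x+y=2z\}$ projects bijectively (on either coordinate) onto $P\cap\rho_zP$, and since $x+y$ is fixed, minimizing $\phi(x,y)=\langle x-y,\vec v\rangle$ decouples into minimizing $\langle x,\vec v\rangle$ and maximizing $\langle y,\vec v\rangle$ on $P\cap\rho_zP$. Tightness is then read off as uniqueness of the $\phi$-minimum on each fiber, which is exactly the existence of unique $\bm$ and $\tp$ on $P\cap\rho_zP$. No appeal to \cref{prop:coeurcoeur}, \cref{prop:normalintersection}, or \cref{lemma:intersection} is needed.

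Your approach instead passes through the normal-cone machinery: you reformulate tightness at $(F,G)$ as $L(F)\cap L(G)=\{0\}$, identify this with $\dim(G\cap\rho_zF)=0$, and then use \cref{prop:coeurcoeur} together with \cref{prop:normalintersection} to match $(F,G)\in\mathcal{F}^\phi$ with $\vec v$ lying in the normal cone of $G\cap\rho_zF$ inside $P\cap\rho_zP$. This is heavier but has the virtue of making explicit \emph{which} face of $P\cap\rho_zP$ a given pair $(F,G)$ corresponds to, information that the paper's fiber argument leaves implicit. One small simplification in your converse: for the minimum face you do not actually need to replace $\vec v$ by $-\vec v$. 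Writing the min face as $A\cap\rho_zB$ gives $-\vec v\in\cone(-\mathcal{N}_P(B)\cup\mathcal{N}_P(A))$, hence $\vec v\in\cone(-\mathcal{N}_P(A)\cup\mathcal{N}_P(B))$, so $(A,B)\in\mathcal{F}^\phi$ directly and tightness applies without the detour.
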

\begin{proof} We read the proof of \cite[Proposition 5]{MTTV19} with a new perspective. We have that $\pi(\mathcal{F}^\phi)$ is tight if and only if for any $z \in P$, the fiber $\pi^{-1}(z)=\{(x, y)\in P\times P \ | \ x+ y=2 z \}$ admits a unique minimal element with respect to $\phi$. Since the sum of $x+y$ is constant, $\phi(x, y)$ is minimized in $\pi^{-1}(z)$ if and only if $\langle x,\vec v\rangle$ is minimized and $\langle y,\vec v\rangle$ is maximized. On both coordinates, $\pi^{-1}(z)$ projects down to the intersection $P\cap\rho_zP$. So, the fiber $\pi^{-1}(z)$ admits a unique minimal element with respect to $\phi$ if and only if $P\cap\rho_z P$ admits a unique pair of minimal and maximal elements with respect to $\langle - , \vec v \rangle$. 
\end{proof}

In summary, we have the chain of implications showed in \cref{fig:implications}.
\begin{figure}[h!]
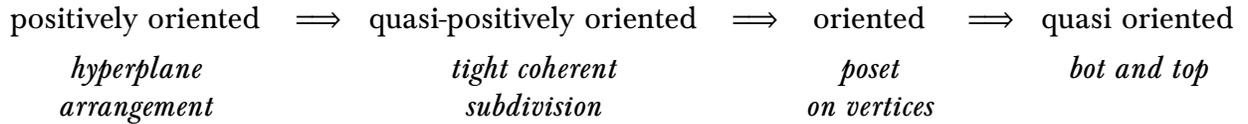

\begin{equation*}
\begin{matrix}
\medskip
\text{positively oriented} & \implies & \text{quasi-positively oriented} & \implies & \text{oriented} & \implies & \text{quasi oriented}  \\ 
\textit{hyperplane} & & \textit{tight coherent} & & \textit{poset} & & \textit{bot and top} \\
\textit{arrangement} & & \textit{subdivision} & & \textit{on vertices} & &  \\
\end{matrix}
\end{equation*}
\caption{The different notions of orientation and their associated properties.}
\label{fig:implications}
\end{figure}

In the case where $(P,\vec v)$ is quasi-positively oriented, the proof of \cref{prop:quasiposifftight} gives the following pointwise description of $\triangle_{(P,\vec v)}$.

\begin{proposition}[Bot-top diagonal]\label{def:Diag}
The map $\triangle_{(P,\vec v)}$ associated to a quasi-positively oriented polytope $(P, \vec v)$ admits the following pointwise description 
\begin{align*}
\begin{array}{rlcl}
\triangle_{(P,\vec v)}\  : & P &\to  &P\times P\\
&z & \mapsto& 
\bigl(\bm(P\cap \rho_z P),\,  \tp(P\cap \rho_z P)\bigr) \ .
\end{array}
\end{align*}
We call it the \emph{bot-top diagonal} of $(P,\vec v)$. 
\end{proposition}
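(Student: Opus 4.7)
The plan is to simply unpack the defining property of $\triangle_{(P,\vec v)}$ and reuse the analysis of the fibers $\pi^{-1}(z)$ carried out in the proof of \cref{prop:quasiposifftight}. Recall that $\triangle_{(P,\vec v)}$ is defined as the unique section of $\pi$ that, in each fiber $\pi^{-1}(z)$, picks the (unique) point minimizing $\phi$. So the whole statement reduces to identifying that minimizer with the pair $(\bm,\tp)$ of $P\cap\rho_z P$.

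First I would fix $z\in P$ and rewrite the fiber as
\begin{equation*}
\pi^{-1}(z)\;=\;\{(x,y)\in P\times P\mid x+y=2z\}.
\end{equation*}
The map $(x,y)\mapsto x$ sends this bijectively onto $P\cap\rho_z P$: the condition $y=2z-x\in P$ is by definition equivalent to $x\in\rho_z P$, and conversely $x\in\rho_z P$ gives back $y=2z-x\in P$. The same is true with the roles of $x$ and $y$ swapped.

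Next I would use that on this fiber the sum $x+y=2z$ is constant, so
\begin{equation*}
\phi(x,y)\;=\;\langle x-y,\vec v\rangle\;=\;2\langle x,\vec v\rangle-2\langle z,\vec v\rangle,
\end{equation*}
which shows that minimizing $\phi$ over $\pi^{-1}(z)$ is equivalent to minimizing $\langle x,\vec v\rangle$ over the first coordinate; by symmetry it is simultaneously equivalent to maximizing $\langle y,\vec v\rangle$ over the second coordinate. Under the above bijections, these minimization/maximization problems are exactly those defining $\bm(P\cap\rho_z P)$ and $\tp(P\cap\rho_z P)$.

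Finally I would invoke quasi-positive orientation: by definition, $P\cap\rho_z P$ is quasi-oriented by $\vec v$ for every $z\in P$, so $\bm(P\cap\rho_z P)$ and $\tp(P\cap\rho_z P)$ exist and are unique. Hence the unique minimizer of $\phi$ on $\pi^{-1}(z)$ is precisely $\bigl(\bm(P\cap\rho_zP),\tp(P\cap\rho_zP)\bigr)$, which gives the claimed pointwise formula. There is no real obstacle here; the only subtle point is tracking the reflection carefully so as not to swap $\bm$ and $\tp$, and noting that quasi-positive orientation is exactly the hypothesis that guarantees well-definedness of both entries at every $z$.
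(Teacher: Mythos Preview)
Your argument is correct and is essentially identical to the paper's own: the paper simply points back to the proof of \cref{prop:quasiposifftight}, where exactly the same fiber analysis (writing $\pi^{-1}(z)=\{(x,y)\in P\times P\mid x+y=2z\}$, projecting onto $P\cap\rho_zP$, and reducing the minimization of $\phi$ to minimizing $\langle x,\vec v\rangle$ and maximizing $\langle y,\vec v\rangle$) is carried out. There is nothing to add.
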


\begin{example} 
\label{example:pyramid} 
We consider the pyramid \[P=\conv((0,0,1),(-1,0,0),(0,1.5,-0.5),(0,-1.5,-0.5),(3,0,-2))\subset \RR^3 \ , \] shown in \cref{fig:pyramide}, and we set $\vec v=(0,0,1)$. We claim that $\pi(\mathcal{F}^\phi)$ is tight while $(P, \vec v)$ is not positively oriented. 
For the second assertion, we let $z=0$ and we observe that four edges of $P\cap\rho_z P$ lie in the $xy$ plane and are thus perpendicular to $\vec v$. 
For the first assertion, we first observe that directions of the rays of $\mathcal{N}_P$ are given by $(1,1,1),(-1,1,1),(-1,-1,1),(1,-1,1)$ and $(-0.5,0,-1)$. 
Then, one can compute that for any pair of faces $(F,G)$ with $\dim F + \dim G > \dim P$, we have $\vec v \notin \cone(-\mathcal{N}_P(F)\cup\mathcal{N}_P(G))$. 
We conclude with \cref{prop:coeurcoeur}.
\end{example}

\begin{figure}[h!]
    \centering
    \includegraphics[width=0.4\linewidth]{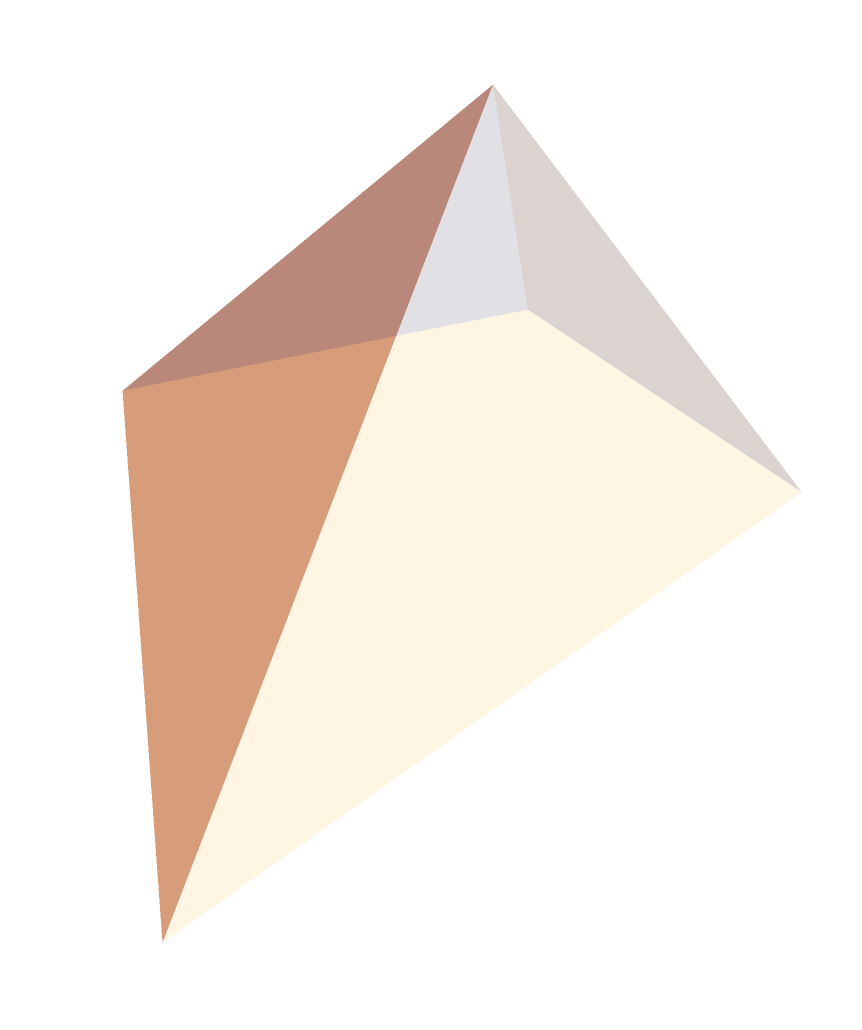} 
    \caption{The pyramid $P$ described in \cref{example:pyramid} is quasi-positively oriented by $\vec v=(0,0,1)$ but not positively oriented.}
    \label{fig:pyramide}
\end{figure}

\subsection{Poset description of the diagonal} 

\begin{proposition} \label{prop:topbot} Let $(P,\vec v)$ be an oriented polytope. Then, 
    \[(F,G)\in \mathcal{F}^\phi \implies \tp(F)\leq\bm(G) \ . \]
\end{proposition}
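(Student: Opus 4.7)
The plan is to translate the condition $(F,G)\in\mathcal{F}^\phi$ into a geometric statement in the normal fan $\mathcal{N}_P$ and then trace a line in direction $\vec v$ to extract a monotone chain of vertices of $P$. By \cref{corollary:perturbation}, there exist $\varepsilon>0$ and $c\in\mathcal{N}_P(F)$ with $c'\coloneqq c+\varepsilon\vec v\in\mathcal{N}_P(G)$. I consider the segment $\gamma(t)=c+t\vec v$ for $t\in[0,\varepsilon]$ and, for each $t$, record the unique face $H(t)\in\mathcal{L}(P)$ whose normal cone contains $\gamma(t)$ in its relative interior (the relative interiors of normal cones partition $\RR^n$). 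In particular $F\subseteq H(0)$ and $G\subseteq H(\varepsilon)$.

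The key local claim is: if $t_0\in[0,\varepsilon]$ and $H\coloneqq H(t_0)$ has positive dimension, then $H(t_0-s)=\bm(H)$ and $H(t_0+s)=\tp(H)$ for all sufficiently small $s>0$. Indeed, expanding
\[
\bigl\langle y,\gamma(t_0)\pm s\vec v\bigr\rangle=\langle y,\gamma(t_0)\rangle\pm s\langle y,\vec v\rangle,
\]
the maximum over $P$ is first realized on $H$, and then the orientation hypothesis that $\vec v$ is perpendicular to no edge of $H$ forces this maximum to be uniquely at $\tp(H)$ (for the $+$ sign) or $\bm(H)$ (for the $-$ sign). Moreover, the simplex algorithm applied inside the polytope $H$ (oriented by $\vec v$) provides $\vec v$-monotone edge paths from $\bm(H)$ to any vertex $v$ of $H$, and from $v$ to $\tp(H)$; since the $1$-skeleton of $H$ sits in that of $P$, this yields $\bm(H)\leq v\leq\tp(H)$ in the $\vec v$-poset on vertices of $P$.

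Since $\gamma$ meets only finitely many cones of $\mathcal{N}_P$, there are only finitely many times $0=t_0<t_1<\cdots<t_{N-1}<t_N=\varepsilon$ at which $H(t)$ has positive dimension, and between them $H$ equals a vertex $w_i$. Chaining the local claim at each $t_i$ produces
\[
\tp(F)\leq\tp(H(0))=w_0\leq w_1\leq\cdots\leq w_{N-1}=\bm(H(\varepsilon))\leq\bm(G),
\]
where the outer inequalities apply the same simplex-algorithm bound inside $H(0)\supseteq F$ and $H(\varepsilon)\supseteq G$. Hence $\tp(F)\leq\bm(G)$. The main subtlety, and the place where the orientation hypothesis is essential, is the uniqueness of $\tp(H)$ and $\bm(H)$ for each face $H$ traversed: without it, the transitions of $H(t)$ along $\gamma$ could fail to be monotone in the vertex poset, and the argument would need an additional perturbation step to restore transversality.
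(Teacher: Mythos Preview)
Your argument is correct and is essentially the same as the paper's: both use \cref{corollary:perturbation} to obtain a segment $\gamma(t)=c+t\vec v$ in the normal fan, record the sequence of faces whose normal cones it traverses, and use the orientation hypothesis to show that each transition through a positive-dimensional face $H$ goes from $\bm(H)$ to $\tp(H)$. The only cosmetic difference is that the paper first reduces to the case where $F$ and $G$ are vertices (observing that $(F,G)\in\mathcal{F}^\phi$ implies $(\tp(F),\bm(G))\in\mathcal{F}^\phi$ since $\mathcal{N}_P(\tp(F))\supseteq\mathcal{N}_P(F)$ and $\mathcal{N}_P(\bm(G))\supseteq\mathcal{N}_P(G)$), whereas you handle general $F,G$ directly by bounding the endpoints with a simplex-algorithm argument inside $H(0)\supseteq F$ and $H(\varepsilon)\supseteq G$; your version is also more explicit about why $\bm(H)\leq\tp(H)$ holds in the poset.
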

\begin{proof} \cref{corollary:perturbation} asserts that $(F,G) \in \mathcal{F}^\phi \iff \forall \varepsilon>0 \ \exists x \in \mathcal{N}_P(F)$ such that $x+\varepsilon\vec v \in \mathcal{N}_P(G)$. 
    \begin{enumerate}
        \item Suppose that $F$ and $G$ are vertices of $P$. Let $\varepsilon>0$ and choose $x \in \mathcal{N}_P(F)$ such that $x+\varepsilon\vec v \in \mathcal{N}_P(G)$. We consider the segment $\ell=\{x+t\vec v \ | \ t\in [0,\varepsilon]\}$ and the linearly ordered set of maximal cones of $\mathcal{N}_P$ crossed by $\ell$. They determine a sequence of vertices $F=F_1,F_2,\ldots,F_k=G$. We claim that $F_i\leq F_{i+1}$ for all $i$. Indeed, when $\ell$ goes from $\mathcal{N}_P(F_i)$ to $\mathcal{N}_P(F_{i+1})$, it intersects the interior of a cone $\mathcal{N}_P(E)$, where $E$ is a face with $\dim(E)\geq 1$. Since $\vec v$ orients $P$, this intersection is a point. So we must have $F_i=\bm(E)$ and $F_{i+1}=\tp(E)$. 
        
        \item For general faces $F$ and $G$, we have $(F,G)\in \mathcal{F}^\phi \implies (\tp(F),\bm(G))\in \mathcal{F}^\phi$ and we can apply the preceding point. 
    \end{enumerate}
\end{proof}

Applying the present method to the cubes, the standard simplices and the associahedra as in \cite[Example 1 and Theorem 2]{MTTV19}, one obtains a characterization of the form $(F,G)\in \mathcal{F}^\phi \iff \tp(F)\leq\bm(G)$. \cref{prop:topbot} shows that these are the "simplest" possible formulas.  
In \cref{section:diagonal} we will study a family of examples where these formulas are no longer sufficient to characterize the image of the diagonal.

\subsection{Fundamental hyperplane arrangement and universal formula}

We now restrict our attention to a positively oriented polytope $(P,\vec v)$. For such a polytope, the orientation vector $\vec v$ does not live in any linear space orthogonal to an edge of $P\cap\rho_z P$, for any $z \in P$. That is, $\vec v$ lives in a chamber of the following hyperplane arrangement. 

\begin{definition}[Fundamental hyperplane arrangement] \label{def:fundamentalhyperplane} Let $P\subset \RR^n$ be a polytope. The \emph{fundamental hyperplane arrangement} $\mathcal{H}_P$ of $P$ is the collection of hyperplanes in $\RR^n$ orthogonal to the directions of the edges of $P\cap\rho_z P$, for all $z \in P$. 
\end{definition}

Here, a \emph{direction} of an edge is an arbitrary choice of vector spanning its affine hull. The fundamental hyperplane arrangement is \emph{central}, i.e. every hyperplane $H\in \mathcal{H}_P$ contains the origin. We call the interior of a maximal cone in $\mathcal{H}_P$ a \emph{chamber}. We observe that $\mathcal{H}_P$, considered as a fan, refines both the normal fan $\mathcal{N}_P$ of $P$ and its opposite $-\mathcal{N}_P$.

\begin{example}[The cubes] 
    The fundamental hyperplane arrangement of the $n$-dimensional cube $C^n=[0,1]^n \subset \RR^n$ is the set of coordinate hyperplanes $\mathcal{H}_{C^n}=\{x_i=0 \ | \ 1\leq i \leq n\}$. In this case, for any $z \in C^n$, the edges of $C^n\cap\rho_zC^n$ are all parallel to some edges of $C^n$, so $\mathcal{H}_{C^n}$ is just the set of hyperplanes perpendicular to the directions of the edges of $C^n$.
\end{example}

\begin{example}[The simplices] 
    The fundamental hyperplane arrangement of the $n$-dimensional standard simplex $\Delta^n=\{(x_0,\ldots,x_n)\in \RR^{n+1} \ | \ x_0+\cdots + x_n=1\}$ (which is distinct from the realization considered in \cref{rem:Abouzaid}) is the braid arrangement $\mathcal{H}_{\Delta^n}=\{x_i=x_j \ | \ 0\leq i<j \leq n\}$. Here again, it corresponds to the set of hyperplanes perpendicular to the directions of the edges of $\Delta^n$.
\end{example}

\begin{example}[The associahedra] 
    The fundamental hyperplane arrangement of the Loday realization of the $n$-dimensional associahedron $K^n\subset \RR^{n+1}$ is the following refinement of the braid arrangement \[\left\{x_{i_1}+x_{i_3}+\cdots+x_{i_{2k-1}}=x_{i_2}+x_{i_4}+\cdots+x_{i_{2k}} \ | \ 1\leq i_1<i_2<\cdots<i_{2k}\leq n+1, \ 1\leq k\leq \lfloor \tfrac{n+1}{2} \rfloor \right\} \ . \] In contrast with the preceding examples, new directions of edges appear when considering $K^n\cap\rho_z K^n$ for some $z \in K^n$, see \cite[Proposition 2]{MTTV19}. 
\end{example}

The following proposition is useful for computing the fundamental hyperplane arrangement of a polytope. 
\begin{proposition} \label{prop:edgesPP} Let $P$ be a polytope. There is a surjection
\begin{eqnarray*}
\begin{Bmatrix}
\text{pair of faces } (F,G) \text{ of } P \\
\text{with } \codim(\cone(-\mathcal{N}_P(F)\cup\mathcal{N}_P(G)))=1  
\end{Bmatrix}
\twoheadrightarrow
\begin{Bmatrix}
\text{ direction } \vec d \text{ of an edge of } P\cap\rho_z P  \\
\text{ for some } z \in P 
\end{Bmatrix}_{\big{/}\sim} \ ,
\end{eqnarray*}
where we identify in the target two directions which are scalar multiples of each other. 
\end{proposition}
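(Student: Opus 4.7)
The plan is to build the surjection explicitly by composing \cref{lemma:intersection} with \cref{prop:normalintersection}, which together let one read off the direction of an edge of $P \cap \rho_z P$ from its normal cone $\cone(-\mathcal{N}_P(F) \cup \mathcal{N}_P(G))$. The key conceptual point is that the codimension-one condition in the domain is exactly the condition that singles out edges in $P \cap \rho_z P$ via \cref{prop:normalintersection}.

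First I would define the map. Given a pair $(F,G)$ in the domain, the set $(\mathring F + \mathring G)/2$ is non-empty, and for any $z = (x_0 + y_0)/2$ in it, the point $y_0 = 2z - x_0$ lies in $\mathring G \cap \rho_z \mathring F$, so \cref{lemma:intersection} produces a face $G \cap \rho_z F$ of $P \cap \rho_z P$. Applying \cref{prop:normalintersection} identifies its normal cone with $\cone(-\mathcal{N}_P(F) \cup \mathcal{N}_P(G))$, whose codimension-one hypothesis forces this face to be one-dimensional, i.e.\ an edge. The map sends $(F,G)$ to the affine direction of this edge, modulo scalar multiplication.

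Next I would verify well-definedness and surjectivity. Well-definedness is immediate because \cref{prop:normalintersection} yields the \emph{same} normal cone for any choice of $z, w \in (\mathring F + \mathring G)/2$, so the two edges $G \cap \rho_z F$ and $G \cap \rho_w F$ are parallel, giving a single class modulo scalars. For surjectivity, start from an edge $E$ of $P \cap \rho_z P$ for some $z \in P$. Using that $\rho_z$ restricts to a face-preserving bijection $\mathcal{L}(\rho_z P) \cong \mathcal{L}(P)$, \cref{lemma:intersection} lets one write $E = G \cap \rho_z F$ for a pair of faces $F, G$ of $P$; then \cref{prop:normalintersection} expresses its normal cone as $\cone(-\mathcal{N}_P(F) \cup \mathcal{N}_P(G))$, which has codimension one because $E$ is an edge. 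Hence $(F,G)$ is in the domain and maps to the direction of $E$.

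The main technical point to address carefully is the small relative-interior argument used to identify $G \cap \rho_z F$ as a genuine non-empty face of $P \cap \rho_z P$ for a compatible choice of $z$, since the entire proof hinges on applying \cref{lemma:intersection} and \cref{prop:normalintersection} simultaneously. Once this bridge is established, the codimension-one hypothesis translates mechanically into "edge" via the identity $\codim \mathcal{N}_P(F) = \dim F$, and both the well-definedness and the surjectivity become formal consequences of the two cited results. Injectivity is not claimed and indeed need not hold in general, since distinct pairs $(F,G)$ can produce the same cone $\cone(-\mathcal{N}_P(F) \cup \mathcal{N}_P(G))$.
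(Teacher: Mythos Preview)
Your proof is correct and follows essentially the same approach as the paper: both use \cref{prop:normalintersection} to identify the codimension-one condition with $G\cap\rho_z F$ being an edge of $P\cap\rho_z P$, and \cref{lemma:intersection} to obtain surjectivity. Your version is somewhat more explicit about well-definedness (independence of the choice of $z$) and the relative-interior argument ensuring $G\cap\rho_z F$ is non-empty, points the paper leaves implicit in its appeal to \cref{prop:normalintersection}.
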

\begin{proof} Let $(F,G)$ be a pair of faces of $P$ such that $\codim(\cone(-\mathcal{N}_P(F)\cup\mathcal{N}_P(G))=1$. By \cref{prop:normalintersection}, this condition is equivalent to $\codim(\mathcal{N}_{P\cap\rho_z P}(G\cap\rho_z F))=1$, where $z \in (\mathring{F}+\mathring{G})/2$. Thus, a pair of faces satisfying this codimension 1 condition defines an edge $G\cap\rho_z F$ of $P\cap\rho_zP$ and the application above is well-defined. Now by \cref{lemma:intersection}, any edge of $P\cap\rho_zP$ arises as the intersection $G\cap\rho_z F$ for some pair of faces $(F,G) \in P\times P$, so the application is also surjective. 
\end{proof}

Now we aim at extracting a combinatorial formula for the cellular image of the diagonal from the geometry of the fundamental hyperplane arrangement.

\begin{proposition}[Chamber invariance] \label{prop:chamberinvariance} Let $P\subset \RR^n$ be a polytope. Two vectors $\vec v$ and $\vec w$ belonging to the same chamber of $\mathcal{H}_P$ define the same bot-top diagonal, that is \[ \triangle_{(P,\vec v)}=\triangle_{(P,\vec w)} \ . \]  
\end{proposition}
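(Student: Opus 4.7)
The plan is to argue that the chambers of $\mathcal{H}_P$ are exactly the regions on which the assignment $\vec v\mapsto (\bm(P\cap \rho_z P),\tp(P\cap \rho_z P))$ is locally constant for every $z\in P$ simultaneously, and then invoke the pointwise description of \cref{def:Diag}.

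First I would check that any $\vec v$ in a chamber of $\mathcal{H}_P$ positively orients $P$: by definition $\vec v$ is not perpendicular to any edge of $P\cap \rho_z P$ for any $z\in P$, so each such intersection is oriented, hence quasi-oriented. Thus $(P,\vec v)$ is quasi-positively oriented, and by \cref{prop:quasiposifftight} the coherent subdivision $\pi(\mathcal{F}^\phi)$ is tight, so \cref{def:Diag} gives the bot-top description
\[
\triangle_{(P,\vec v)}(z)=\bigl(\bm(P\cap\rho_z P),\,\tp(P\cap\rho_z P)\bigr)
\]
for every $z\in P$, and the same holds verbatim for $\vec w$.

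Next, fix $z\in P$ and set $Q_z\coloneqq P\cap\rho_z P$. The vertex of $Q_z$ minimizing (resp.\ maximizing) the linear form $\langle -,\vec v\rangle$ is determined by which maximal cone of $-\mathcal{N}_{Q_z}$ (resp.\ $\mathcal{N}_{Q_z}$) contains $\vec v$. The walls separating adjacent maximal cones of $\mathcal{N}_{Q_z}$ are supported by hyperplanes orthogonal to the edge directions of $Q_z$, and by the very definition of $\mathcal{H}_P$ all such hyperplanes lie in $\mathcal{H}_P$. Consequently the fan $\mathcal{H}_P$ refines both $\mathcal{N}_{Q_z}$ and $-\mathcal{N}_{Q_z}$, so any chamber of $\mathcal{H}_P$ is contained in a single maximal cone of each. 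If $\vec v$ and $\vec w$ lie in the same chamber of $\mathcal{H}_P$, they select the same vertex for $\bm(Q_z)$ and for $\tp(Q_z)$, for every $z\in P$.

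Combining the last two steps with \cref{def:Diag} yields $\triangle_{(P,\vec v)}(z)=\triangle_{(P,\vec w)}(z)$ for all $z\in P$, which is the desired equality. There is no serious obstacle here: the whole content is that $\mathcal{H}_P$, which was built exactly from the edge directions of the intersections $P\cap \rho_z P$, is by construction a common refinement of the normal fans governing $\bm$ and $\tp$ on all of these intersections. The argument is uniform in $z$, so no case analysis or compactness argument is needed to pass from pointwise invariance to invariance of the map $\triangle_{(P,-)}$.
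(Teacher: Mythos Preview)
Your proof is correct and follows essentially the same idea as the paper's. The paper argues by contraposition---if the two diagonals differ at some $z$, then some edge of $P\cap\rho_z P$ is oriented oppositely by $\vec v$ and $\vec w$, so they lie on opposite sides of a hyperplane of $\mathcal{H}_P$---while you phrase the same content directly in terms of normal fans, observing that each chamber of $\mathcal{H}_P$ sits inside a single maximal cone of $\pm\mathcal{N}_{Q_z}$ for every $z$. These are dual formulations of the same observation.
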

\begin{proof}
Suppose that $\vec v$ and $\vec w$ are such that $\triangle_{(P,\vec v)}\neq\triangle_{(P,\vec w)}$. This means that there is a point $z\in P$ for which $\bm_{\vec v}(P\cap\rho_z P)\neq \bm_{\vec w}(P\cap\rho_z P)$ or $\tp_{\vec v}(P\cap\rho_z P)\neq \tp_{\vec w}(P\cap\rho_z P)$. So, there is an edge $e$ of $P\cap\rho_z P$ such that $\vec v$ and $\vec w$ determine two different orientations of $e$. If $e$ has direction $\vec d$, this means that $\langle \vec d, \vec v \rangle$ and $\langle \vec d, \vec w \rangle$ have opposite signs. Thus $\vec v$ and $\vec w$ lie on opposite sides of the hyperplane $H=\{x \in \RR^n | \ \langle \vec d, x \rangle = 0\} \in \mathcal{H}_P$. 
\end{proof}

\begin{remark}
We note that the converse of \cref{prop:chamberinvariance} does not hold in general, that is, two distinct chambers in $\mathcal{H}_P$ can determine the same bot-top diagonal. This is due to the fact that the condition of being positively oriented is strictly stronger than being quasi-positively oriented. 
\end{remark}

For a vector $\vec d \in \RR^n$ defining an hyperplane $H=\{x \in \RR^n \ | \ \langle \vec d, x \rangle = c \in \RR \}$, we set the notation $H^{+} \coloneqq \{x \in \RR^n \ | \ \langle \vec d , x \rangle > c \}$.

\begin{definition}[Outward pointing normal vector]
\label{def:normalpointing}
Let $F$ be a facet of a polytope $P\subset \RR^n$. 
A vector $\vec d \in \RR^n$ is said to be an \emph{outward pointing normal vector} for $F$ if it defines an hyperplane $H$ such that $P \cap H = F$ and $P \cap H^{+} = \emptyset$.
\end{definition}

Recall that a face $F$ of a polytope $P$ is equal to the intersection of a family of facets $\{F_i\}$.
If we choose an outward pointing normal vector $\vec F_i$ for each facet $F_i$, then the normal cone of $F$ is spanned by these normal vectors together with a basis $\{b_k\}$ of the orthogonal complement of the affine hull of $P$ in $\RR^n$, i.e. we have $\mathcal{N}_P(F)=\cone(\{\vec F_i\} \cup \{ b_k, - b_k\})$. 

For a pair of faces $F,G$ of $P$, let us set the notation \[\mathcal{H}_P(F,G)\coloneqq\{H \in \mathcal{H}_P \ | \ H\text{ intersects the interior of a codimension 1 face of }\cone(-\mathcal{N}_P(F)\cup\mathcal{N}_P(G))\} \ . \] 

\begin{theorem}[Universal formula for the bot-top diagonal] 
\label{thm:universalformula} 
Let $(P,\vec v)$ be a positively oriented polytope in $\RR^n$. 
For each $H\in\mathcal{H}_P$, we choose a normal vector $\vec d_H$ such that $\langle \vec d_H, \vec v \rangle >0$. 
We have 
\begin{eqnarray}
(F,G) \in \Ima \triangle_{(P,\vec v)} 
&\iff& \forall H \in \mathcal{H}_P(F,G), \ \exists i , \ \langle \vec F_i, \vec d_H \rangle < 0  \text{ or } \exists j , \ \langle \vec G_j, \vec d_H \rangle > 0  \\ 
&\iff& \forall H \in \mathcal{H}_P , \ \exists i , \ \langle \vec F_i, \vec d_H \rangle < 0  \text{ or } \exists j , \ \langle \vec G_j, \vec d_H \rangle > 0 \ . 
\end{eqnarray} 
\end{theorem}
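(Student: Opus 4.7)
The plan is to use \cref{prop:coeurcoeur} to replace the membership $(F,G) \in \Ima \triangle_{(P,\vec v)}$ by the convex-geometric condition $\vec v \in C$, where
\[ C := \cone\bigl(-\mathcal{N}_P(F) \cup \mathcal{N}_P(G)\bigr). \]
Since $(P, \vec v)$ is positively oriented, it is in particular quasi-positively oriented, so by \cref{prop:quasiposifftight} the coherent subdivision $\pi(\mathcal{F}^\phi)$ is tight and $\Ima \triangle_{(P, \vec v)}$ is indexed by $\mathcal{F}^\phi$. Recall moreover that $C$ is the cone generated by the vectors $\{-\vec F_i\}_i \cup \{\vec G_j\}_j$, together with a basis of the orthogonal complement of the affine hull of $P$. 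The two equivalences in the theorem will then follow from the implications $\vec v \in C \Rightarrow$ ``$\mathcal{H}_P$-condition'' and ``$\mathcal{H}_P(F,G)$-condition'' $\Rightarrow \vec v \in C$, together with the trivial inclusion $\mathcal{H}_P(F,G) \subseteq \mathcal{H}_P$.

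For the easy direction, I would expand $\vec v \in C$ as a non-negative combination $\vec v = \sum_i \lambda_i (-\vec F_i) + \sum_j \mu_j \vec G_j$ and pair it with an arbitrary $\vec d_H$ such that $\langle \vec d_H, \vec v \rangle > 0$. The identity
\[ 0 < \sum_i \lambda_i \langle \vec d_H, -\vec F_i \rangle + \sum_j \mu_j \langle \vec d_H, \vec G_j \rangle \]
forces at least one summand to be strictly positive, which is precisely $\langle \vec F_i, \vec d_H \rangle < 0$ for some $i$ or $\langle \vec G_j, \vec d_H \rangle > 0$ for some $j$. This argument is uniform in $H \in \mathcal{H}_P$, so it proves the stronger ``$\mathcal{H}_P$-condition''.

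The converse ``$\mathcal{H}_P(F,G)$-condition'' $\Rightarrow \vec v \in C$ is the main obstacle, and I would argue it by contraposition: assuming $\vec v \notin C$, I would produce a witness hyperplane $H_0 \in \mathcal{H}_P(F,G)$ at which the condition fails. In the full-dimensional case, since $C$ is a closed polyhedral cone not containing $\vec v$, there exists a facet of $C$ whose affine hull $H_0$ strictly separates $\vec v$ from $C$. The critical step is to identify $H_0$ as an element of $\mathcal{H}_P(F, G)$: the facet in question is itself of the form $\cone(-\mathcal{N}_P(F') \cup \mathcal{N}_P(G'))$ for faces $F' \supseteq F$ and $G' \supseteq G$ obtained by selecting a subset of the generators of $C$, and this cone has codimension $1$ in $\RR^n$; by \cref{prop:edgesPP}, its linear span $H_0$ is orthogonal to the direction of some edge of a $P \cap \rho_z P$ and therefore belongs to $\mathcal{H}_P$, and $H_0$ trivially intersects the (relative) interior of this facet. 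With $\vec d_{H_0}$ oriented so that $\langle \vec d_{H_0}, \vec v \rangle > 0$, all of $C$ lies in the closed halfspace $\{\langle \vec d_{H_0}, \cdot \rangle \leq 0\}$, whence $\langle \vec F_i, \vec d_{H_0} \rangle \geq 0$ for every $i$ and $\langle \vec G_j, \vec d_{H_0} \rangle \leq 0$ for every $j$, so the condition fails at $H_0$.

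The remaining subtlety is the case where $C$ is not full-dimensional in $\RR^n$. One should then either separate $\vec v$ from $C$ inside $\mathrm{aff}(C)$ and lift the separating hyperplane back to $\RR^n$ by adjoining a basis of the orthogonal complement of $\mathrm{aff}(C)$ (which already figures among the generators of $C$ in the first step), or else observe that any hyperplane containing $C$ entirely places all generators on both sides at once and thus supplies an immediate witness. Verifying in either subcase that the resulting hyperplane still lies in $\mathcal{H}_P$ via \cref{prop:edgesPP} closes the argument.
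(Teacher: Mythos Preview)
Your overall architecture is exactly the paper's: reduce via \cref{prop:coeurcoeur} to $\vec v\in C:=\cone(-\mathcal{N}_P(F)\cup\mathcal{N}_P(G))$, prove $\Rightarrow$ by expanding $\vec v$ as a nonnegative combination, and prove $\Leftarrow$ by producing a separating facet-hyperplane of $C$. But the justification of the crucial step---that the affine hull $H_0$ of a facet of $C$ lies in $\mathcal{H}_P$---has a gap.

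You assert that this facet is $\cone(-\mathcal{N}_P(F')\cup\mathcal{N}_P(G'))$ for some $F'\supseteq F$, $G'\supseteq G$ ``obtained by selecting a subset of the generators of $C$''. It is true that a facet is generated by a subset $\{-\vec F_i:i\in I'\}\cup\{\vec G_j:j\in J'\}$ of the generators, but the cone this subset spans need not equal $-\mathcal{N}_P(F')$ with $F'=\bigcap_{i\in I'}F_i$: the intersection $\bigcap_{i\in I'}F_i$ may lie in further facets $F_{i_0}$ with $i_0\notin I'$, so $\mathcal{N}_P(F')$ is strictly larger, and the resulting cone $\cone(-\mathcal{N}_P(F')\cup\mathcal{N}_P(G'))$ can be full-dimensional. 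In that case \cref{prop:edgesPP} does not apply to $(F',G')$, and your route to $H_0\in\mathcal{H}_P$ collapses. The paper closes this gap not through generators but through \cref{prop:normalintersection}: one has $C=\mathcal{N}_{P\cap\rho_zP}(G\cap\rho_zF)$, so in the full-dimensional case $G\cap\rho_zF$ is a vertex of $P\cap\rho_zP$, the facets of $C$ are the normal cones of the edges adjacent to that vertex, and $H_0$ is by construction orthogonal to such an edge---hence $H_0\in\mathcal{H}_P$ directly from \cref{def:fundamentalhyperplane}. Once you invoke \cref{prop:normalintersection} here, your argument goes through (and you may even skip the chamber-invariance step the paper uses).

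Your treatment of the non-full-dimensional case is also incomplete. Separating ``inside $\mathrm{aff}(C)$'' does not make sense because $\vec v$ need not lie in $\mathrm{aff}(C)$. Your second option---pick a hyperplane containing $C$, so all inner products vanish---is the right idea and is precisely what the paper does, but you must still exhibit such a hyperplane inside $\mathcal{H}_P(F,G)$; again this comes from \cref{prop:normalintersection}: if $\dim(G\cap\rho_zF)\ge1$, take any edge $e\subseteq G\cap\rho_zF$, and the hyperplane orthogonal to $e$ contains $C$ and lies in $\mathcal{H}_P$.
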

\begin{proof} 
Let us write $\cone(-F,G)\coloneqq \cone(-\mathcal{N}_P(F)\cup\mathcal{N}_P(G))=\cone(\{\vec F_i\} \cup \{\vec G_j\} \cup \{b_k,-b_k\})$ and let us denote by $C$ the chamber of $\mathcal{H}_P$ containing $\vec v$. Combining \cref{prop:coeurcoeur,prop:chamberinvariance} we have that $(F,G) \in \Ima\triangle_{(P,\vec v)} \iff \vec v \in \cone(-F,G) \iff C \subset \cone(-F,G)$. 
Moreover, we recall from \cref{prop:normalintersection} that
 we have $\cone(-F,G)=\mathcal{N}_{P\cap\rho_z P}(G\cap\rho_z F)$ for any $z \in (\mathring F + \mathring G)/2$. 
We observe that since every $\vec d_H$ lives in the affine hull of $P$, we have $\langle \vec d_H, b_k \rangle =0$ for all $H$ and for all $k$, so we can focus on the families of $\vec F_i$'s and $\vec G_j$'s and distinguish two cases. 

If $\dim(G\cap\rho_z F)\geq 1$, both sides of (1) are false and thus equivalent. Indeed, in this case $\cone(-F,G)$ is not full-dimensional, so it cannot contain $C$, which is full-dimensional. Moreover, $\cone(-F,G)$ belongs to all hyperplanes $H\in\mathcal{H}_P(F,G)$, which implies $\langle \vec F_i ,\vec d_H \rangle =\langle \vec G_j ,\vec d_H \rangle=0$ for all $i,j$. The same argument applies to (2), since $\mathcal{H}_P(F,G)\subset\mathcal{H}_P$.

Suppose now that $\dim(G\cap\rho_z F)=0$. In this case $\cone(-F,G)$ is full-dimensional and its bounding hyperplanes are precisely the hyperplanes perpendicular to the edges of $P\cap\rho_z P$ which are adjacent to $G\cap\rho_z F$, that is, the hyperplanes of $\mathcal{H}_P(F,G)$. By definition, we have $C\subset H^{+}$ for all $H\in\mathcal{H}_P$. We examine the first implication ($\implies$) of (2). Suppose that $(F,G)\in\Ima\triangle_{(P,\vec v)}$, and let $H\in\mathcal{H}_P$. Since $C$ is full-dimensional, we have $C\subset\cone(-F,G)\implies \cone(-F,G)\cap H^{+}\neq\emptyset$. In particular, there exists an outward pointing normal vector in $\cone(-F,G)$ which has a strictly positive scalar product with $\vec d_H$, hence the right hand side of (2). This implies the right hand side of (1). Now we prove the reverse implication ($\impliedby$) of (1) by contraposition. If $C \nsubset \cone(-F,G)$, then \cref{prop:chamberinvariance} implies that $C\cap\cone(-F,G)=\emptyset$. In this case, there exists an $H\in\mathcal{H}_P(F,G)$ such that $\cone(-F,G)\subset\RR^n\setminus H^{+}$. Indeed, if we had $\cone(-F,G)=\cap_{H\in\mathcal{H}_P(F,G)}\overline{H^{+}}$, where $\overline{H^{+}}\coloneqq\{x \in \RR^n | \ \langle \vec d_H, x \rangle \geq 0\}$, then we would have $C=\cap_{H\in\mathcal{H}_P}H^{+}\subset \cone(-F,G)$ which is impossible. So the scalar product of any outward pointing normal vector in $\cone(-F,G)$ with $\vec d_H$ has a nonpositive value.
\end{proof}

In practice, one uses \cref{thm:universalformula} by first computing the directions $\vec d_H$ from the normal fan of $P$, and then applying (2). The equivalence (1) is of a more conceptual nature: it says that strictly speaking, all the hyperplanes of $\mathcal{H}_P$ are not needed in the computation of $\Ima\triangle_{(P,\vec v)}$. However, computing the set of hyperplanes $\mathcal{H}_P(F,G)$ for a given pair of faces seems to be more complicated than applying (2), both from the combinatorial and the computational points of view. 

\begin{example}[The cubes] The $n$-dimensional cube $C^n=[0,1]^n \subset \RR^n$ is positively oriented by the vector $\vec v=(1,\ldots,1)$. We choose as normal vectors $\vec d_H$ the family $\{ e_i \ | \ 1\leq i \leq n\}$. Any pair of subsets $K,L\subset\{1,\ldots,n\}$ with $K\cap L=\emptyset$ defines a face $F$ such that $\mathcal{N}_{C^n}(F)=\cone(\{ e_k \ | \ k \in K\} \cup \{ - e_l \ | \ l \in L\})$. \cref{thm:universalformula} says that $(F,G) \in \Ima \triangle_{(C^n,\vec v)}$ if and only if for each $1\leq i \leq n$, either $- e_i \in \mathcal{N}_{C^n}(F)$ or $e_i \in \mathcal{N}_{C^n}(G)$. Restricting our attention to pairs with $\dim F + \dim G = n$, we obtain directly the families $\{\vec F_i=- e_i \ | \ i \in I\}$ and $\{\vec G_j= e_j \ | \ j \in J\}$ for partitions $I\cup J=\{1,\ldots,n\}$, which define J.-P. Serre's approximation of the diagonal.  
\end{example}

\begin{example}[The simplices] 
The $n$-dimensional standard simplex $\Delta^n\subset\RR^{n+1}$ is positively oriented by any vector $\vec v$ with strictly increasing coordinates. 
We write $[n+1]=\{1,\ldots,n+1\}$. 
We choose as normal vectors $\vec d_H$ the family $\{ e_j - e_i \ | \ 1\leq i<j \leq n+1\}$ and we set $\vec n =(1,\ldots,1)$. Any subset $I\subset [n+1]$ defines a face $F$ such that $\mathcal{N}_{\Delta^n}(F)=\cone(\{- e_j \ | \ j\in [n+1]\setminus I\}\cup\{\vec n, -\vec n\})$. \cref{thm:universalformula} says that $(F,G) \in \Ima \triangle_{(\Delta^n,\vec v)}$ if and only if for each pair $1\leq i<j \leq n+1$, either $- e_j\in \mathcal{N}_{\Delta^n}(F)$ or $- e_i \in \mathcal{N}_{\Delta^n}(G)$. Restricting our attention to pairs with $\dim F + \dim G = n$, we obtain directly the families $\{\vec F_i=- e_i \ | \ 1 \leq i \leq k\}$ and $\{\vec G_j=- e_j \ | \ k\leq j \leq n+1 \}$ for $k\in [n+1]$, which define the Alexander--Whitney map.
\end{example}

The case of the associahedra will be treated in the same fashion in \cref{section:diagonal}, as a special case of \cref{thm:formulaoperahedra}.

\subsection{Universal formula and refinement of normal fans} \label{sec:applications} We consider polytopes related by refinement of their normal fans. We have in mind applications to the operahedra in \cref{section:diagonal}, and to generalized permutahedra in forthcoming work. We recall that a fan $\mathcal{G}'$ \emph{refines} a fan $\mathcal{G}$, or that $\mathcal{G}$ \emph{coarsens} $\mathcal{G}'$, if every cone of $\mathcal{G}$ is the union of cones of $\mathcal{G}'$ and $\cup_{C\in\mathcal{G}}C=\cup_{C'\in\mathcal{G}'}C'$, see \cite[Lecture 7]{Ziegler95} for more details and examples.

\begin{definition}[Coarsening projection] \label{def:coarseningprojection} Let $P$ and $Q$ be two polytopes in $\RR^n$ such that the normal fan of $P$ refines the normal fan of $Q$. The \emph{coarsening projection} from $P$ to $Q$ is the application $\theta : \mathcal{L}(P)\to\mathcal{L}(Q)$ which sends a face $F$ of $P$ to the face $\theta(F)$ of $Q$ whose normal cone $\mathcal{N}_Q(\theta(F))$ is the minimal cone with respect to inclusion which contains $\mathcal{N}_P(F)$.
\end{definition}

\begin{proposition} \label{prop:coarseninghyperplanes} Let $P$ and $Q$ be two polytopes in $\RR^n$ such that the normal fan of $P$ refines the normal fan of $Q$. Then, their fundamental hyperplane arrangements satisfy $\mathcal{H}_Q\subset \mathcal{H}_P$.
\end{proposition}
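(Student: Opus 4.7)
My plan is to reduce the inclusion $\mathcal{H}_Q \subset \mathcal{H}_P$ to a statement about pairs of faces via \cref{prop:edgesPP}, which identifies each hyperplane of a fundamental arrangement with a pair of faces whose associated cone $\cone(-\mathcal{N}(F)\cup\mathcal{N}(G))$ has codimension $1$. Concretely, for any $H\in\mathcal{H}_Q$, I would first use \cref{prop:edgesPP} applied to $Q$ to extract a pair of faces $(F',G')$ of $Q$ with $\codim(\cone(-\mathcal{N}_Q(F')\cup\mathcal{N}_Q(G')))=1$, the hyperplane $H$ being precisely the linear span of this codimension-one cone.

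Next, I would lift $(F',G')$ to a pair of faces $(F,G)$ of $P$ by exploiting the refinement of normal fans. Since $\mathcal{N}_P$ refines $\mathcal{N}_Q$, the cone $\mathcal{N}_Q(F')$ is a union of cones of $\mathcal{N}_P$ contained in it; selecting a point in the relative interior of $\mathcal{N}_Q(F')$ and taking the cone of $\mathcal{N}_P$ containing it produces a face $F$ of $P$ with $\mathcal{N}_P(F)\subseteq \mathcal{N}_Q(F')$ and, crucially, $\dim\mathcal{N}_P(F)=\dim\mathcal{N}_Q(F')$, so that the two normal cones share the same linear span. The analogous construction applied to $G'$ yields the companion face $G$ of $P$. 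Combining both, one obtains
\[\mathrm{span}\bigl(\cone(-\mathcal{N}_P(F)\cup\mathcal{N}_P(G))\bigr)=\mathrm{span}(\mathcal{N}_P(F))+\mathrm{span}(\mathcal{N}_P(G))=\mathrm{span}(\mathcal{N}_Q(F'))+\mathrm{span}(\mathcal{N}_Q(G'))=H,\]
which shows at once that $\cone(-\mathcal{N}_P(F)\cup\mathcal{N}_P(G))$ has codimension exactly $1$ and is contained in $H$. A second application of \cref{prop:edgesPP}, now to $P$, then says that the pair $(F,G)$ determines the direction of an edge of $P\cap\rho_wP$ for some $w\in P$ that is orthogonal to $H$, whence $H\in\mathcal{H}_P$.

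The delicate point is the dimension-matching in the lifting step: if $\mathcal{N}_P(F)$ were a strictly lower-dimensional subcone of $\mathcal{N}_Q(F')$, the final span computation could yield a proper subspace of $H$, and $\cone(-\mathcal{N}_P(F)\cup\mathcal{N}_P(G))$ could have codimension strictly greater than $1$, preventing the use of \cref{prop:edgesPP}. Choosing the subcone through a point in the relative interior of $\mathcal{N}_Q(F')$ is exactly what guarantees equality of spans and hence the codimension identity; the rest is a direct two-way application of \cref{prop:edgesPP}.
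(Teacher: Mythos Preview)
Your argument is correct and follows essentially the same strategy as the paper's proof: start from a pair of faces of $Q$ witnessing $H\in\mathcal{H}_Q$, lift it to a pair of faces of $P$ whose normal cones have the same dimensions, and verify that the lifted pair again yields a codimension-one cone with span $H$. The only cosmetic differences are that the paper phrases the lift via the coarsening projection $\theta$ (choosing $F'\in\theta^{-1}(F)$ with $\dim F'=\dim F$) and checks the orthogonality of the edge direction $\vec d$ against explicit generators of the normal cones, whereas you select the lift through a relative-interior point and argue directly with linear spans; these are equivalent formulations of the same step.
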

\begin{proof} Let $H\in \mathcal{H}_Q$. Let $F,G$ be two faces of $Q$ such that the intersection $G \cap \rho_z F$ is an edge of $Q\cap\rho_z Q$ perpendicular to $H$, for any $z \in (\mathring{F}+\mathring{G})/2$. If we write $\mathcal{N}_Q(F)=\cone(\{\vec F_i\}_{i\in I})$ and $\mathcal{N}_Q(G)=\cone(\{\vec G_j\}_{j\in J})$, this means that a direction $\vec d$ of this edge is solution to the system of equations $\langle \vec F_i, \vec d \rangle =0$ and $\langle \vec G_j, \vec d \rangle =0$. Now we choose any $F'\in \theta^{-1}(F)$ and $G'\in \theta^{-1}(G)$ such that $\dim(F')=\dim(F)$ and $\dim(G')=\dim(G)$. We can write the normal cones of $F'$ and $G'$ as $\mathcal{N}_P(F')=\cone(\{\vec F_k'\}_{k \in K})$ and $\mathcal{N}_P(G')=\cone(\{\vec G_l'\}_{l \in L})$ where for each $k$ and $l$, we have $\vec F_k' = \sum_{i \in I} \alpha_i \vec F_i$ and $\vec G_l' = \sum_{j \in J} \beta_j \vec G_j$ with $\alpha_i, \beta_j \geq 0$ for all $i,j$. So, the direction $\vec d$ is also solution to the system of equations $\langle \vec F_k', \vec d \rangle =0$ and $\langle \vec G_l', \vec d \rangle =0$. Then, the dimension assumption shows that for any $w \in (\mathring{F}'+\mathring{G}')/2$ the intersection $G'\cap\rho_w F'$ is an edge of $P\cap\rho_w P$ with direction $\vec d$, and thus $H\in\mathcal{H}_P$. 
\end{proof}

\begin{corollary} \label{coroll:coarseningoriented} Suppose that that normal fan of $P$ refines the normal fan of $Q$. If $P$ is positively oriented by $\vec v$, then so is $Q$. 
\end{corollary}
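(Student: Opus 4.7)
The plan is to reduce the statement to a direct comparison of fundamental hyperplane arrangements, which is already provided by \cref{prop:coarseninghyperplanes}. Unpacking the definition, $(P,\vec v)$ being positively oriented means that for every $z \in P$, the vector $\vec v$ is not perpendicular to any edge of $P \cap \rho_z P$; equivalently, $\vec v$ avoids every hyperplane in $\mathcal{H}_P$, i.e.\ $\vec v$ lies in a chamber of $\mathcal{H}_P$. So what I need to establish is that $\vec v$ also lies in a chamber of $\mathcal{H}_Q$.

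First I would reformulate the hypothesis as: $\vec v \notin H$ for every $H \in \mathcal{H}_P$. Then I would invoke \cref{prop:coarseninghyperplanes}, which asserts $\mathcal{H}_Q \subset \mathcal{H}_P$ whenever the normal fan of $P$ refines that of $Q$. Combining these two facts, $\vec v \notin H$ for every $H \in \mathcal{H}_Q$, so $\vec v$ lies in a chamber of $\mathcal{H}_Q$. Translating back through the definition, this exactly says that for every $w \in Q$, the vector $\vec v$ is not perpendicular to any edge of $Q \cap \rho_w Q$, i.e.\ $Q$ is positively oriented by $\vec v$.

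There is essentially no obstacle here beyond the bookkeeping of the definitions. The real content lives in \cref{prop:coarseninghyperplanes}, whose proof reconstructs directions of edges of $Q \cap \rho_z Q$ from directions of edges of $P \cap \rho_w P$ using the refinement of normal fans. Once that is in hand, the present corollary is a one-line consequence: refining the normal fan only enlarges the fundamental hyperplane arrangement, hence shrinks the set of forbidden orientation vectors, so the property of being positively oriented is preserved when passing from $P$ to $Q$.
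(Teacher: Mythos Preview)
Your proposal is correct and matches the paper's own proof, which simply states that the corollary is an immediate consequence of \cref{prop:coarseninghyperplanes}. Your unpacking of the definition of positively oriented via the fundamental hyperplane arrangement is exactly the intended reasoning.
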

\begin{proof} This is an immediate consequence of \cref{prop:coarseninghyperplanes}.
\end{proof}

\begin{proposition} \label{prop:coarseningcommutes} Let $P$ and $Q$ be two polytopes in $\RR^n$ such that the normal fan of $P$ refines the normal fan of $Q$, and suppose that they are both positively oriented by the same vector $\vec v \in \RR^n$. Then, the coarsening projection $\theta$ commutes with the cellular maps $\triangle_{(P,\vec v)}$ and $\triangle_{(Q,\vec v)}$. 
\end{proposition}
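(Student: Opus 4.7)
The plan is to reduce the commutativity to the cone characterization of Proposition~\ref{prop:coeurcoeur}, using the normal-cone inclusion built into the definition of $\theta$. By Definition~\ref{def:coarseningprojection}, for every face $F$ of $P$ one has $\mathcal{N}_P(F)\subseteq\mathcal{N}_Q(\theta(F))$, so for any pair of faces $(F,G)$ of $P$,
\[
\cone\bigl(-\mathcal{N}_P(F)\cup\mathcal{N}_P(G)\bigr)\ \subseteq\ \cone\bigl(-\mathcal{N}_Q(\theta(F))\cup\mathcal{N}_Q(\theta(G))\bigr).
\]
Applying Proposition~\ref{prop:coeurcoeur} on both sides yields at once the face-level implication that whenever $(F,G)$ labels a cell of the coherent subdivision of $P$, the pair $(\theta(F),\theta(G))$ labels a cell of the coherent subdivision of $Q$. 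In particular, $\theta\times\theta$ carries the pairs of faces arising in the image of $\triangle_{(P,\vec v)}$ to pairs of faces arising in the image of $\triangle_{(Q,\vec v)}$.

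To promote this to commutativity of the cellular maps themselves, I would use the pointwise bot--top formula of Proposition~\ref{def:Diag}, which is applicable to $(Q,\vec v)$ by Corollary~\ref{coroll:coarseningoriented} and Proposition~\ref{prop:quasiposifftight}. For $z\in P$, write $\triangle_{(P,\vec v)}(z)=(F,G)$ with $F=\bm_{\vec v}(P\cap\rho_zP)$ and $G=\tp_{\vec v}(P\cap\rho_zP)$. Proposition~\ref{prop:normalintersection} identifies $\mathcal{N}_{P\cap\rho_zP}(G\cap\rho_zF)$ with $\cone(-\mathcal{N}_P(F)\cup\mathcal{N}_P(G))$, and gives the analogous identification on the $Q$-side. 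Since the bot vertex is the unique one whose normal cone contains $\vec v$, the cone inclusion above forces its $\theta$-image inside $Q\cap\rho_{\theta(z)}Q$ to have the same property, and quasi-orientation on the $Q$-side identifies it with $\bm_{\vec v}(Q\cap\rho_{\theta(z)}Q)$. The same argument applied to $-\vec v$ handles the top, which gives
\[
(\theta\times\theta)\,\triangle_{(P,\vec v)}(z)\;=\;\triangle_{(Q,\vec v)}(\theta(z)).
\]

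The main obstacle I expect is this last vertex-matching step: a priori the intersections $P\cap\rho_zP$ and $Q\cap\rho_{\theta(z)}Q$ have different combinatorial types, so one has to check that bot and top match through $\theta$ rather than through some other cellular identification. The cone inclusion provides the bridge between the two normal fans of these intersections, but care is needed to verify that $\theta$ intertwines the reflections, in the sense that the $\theta$-image of $P\cap\rho_zP$ actually lands inside $Q\cap\rho_{\theta(z)}Q$; this should follow from the fact that both reflections are centered at corresponding points ($z$ and $\theta(z)$) together with the compatibility of $\theta$ with the normal-fan refinement recorded in Proposition~\ref{prop:coarseninghyperplanes}.
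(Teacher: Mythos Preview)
Your first paragraph is correct and is essentially half of the paper's argument: from $\mathcal{N}_P(F)\subseteq\mathcal{N}_Q(\theta(F))$ you get the cone inclusion and hence, via Proposition~\ref{prop:coeurcoeur}, that $(\theta\times\theta)$ sends $\mathcal{F}^\phi_P$ into $\mathcal{F}^\phi_Q$.

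The second and third paragraphs, however, rest on a misreading of the statement. The coarsening projection $\theta$ is a map of face lattices $\mathcal{L}(P)\to\mathcal{L}(Q)$, not a map of underlying spaces, so the expression $\theta(z)$ for a point $z\in P$ is undefined, and there is no pointwise square to make commute. The assertion ``$\theta$ commutes with the cellular maps'' is to be read cellularly: $(\theta\times\theta)$ carries the family $\mathcal{F}^\phi_P$ \emph{onto} $\mathcal{F}^\phi_Q$. The whole bot--top discussion involving $P\cap\rho_zP$ versus $Q\cap\rho_{\theta(z)}Q$ therefore does not get off the ground.

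What you are actually missing is the reverse inclusion (surjectivity). The paper obtains it in one stroke by upgrading your containment $\mathcal{N}_P(F)\subseteq\mathcal{N}_Q(\theta(F))$ to the equality
\[
\mathcal{N}_Q(F)=\bigcup_{F'\in\theta^{-1}(F)}\mathcal{N}_P(F'),
\]
which is exactly the meaning of ``$\mathcal{N}_P$ refines $\mathcal{N}_Q$''. Since normal cones are already cones, this yields
\[
\cone\bigl(-\mathcal{N}_Q(F)\cup\mathcal{N}_Q(G)\bigr)=\bigcup_{\substack{F'\in\theta^{-1}(F)\\ G'\in\theta^{-1}(G)}}\cone\bigl(-\mathcal{N}_P(F')\cup\mathcal{N}_P(G')\bigr),
\]
and applying Proposition~\ref{prop:coeurcoeur} to both sides gives $(F,G)\in\mathcal{F}^\phi_Q$ if and only if some $(F',G')\in\theta^{-1}(F)\times\theta^{-1}(G)$ lies in $\mathcal{F}^\phi_P$. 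That is the full commutation statement; no pointwise argument is needed.
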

\begin{proof} Let $F$ be a face of $Q$. By definition of the coarsening projection $\theta$, we have that $\mathcal{N}_Q(F)=\cup_{F'\in\theta^{-1}(F)}\mathcal{N}_P(F')$. It follows that \[\bigcup_{\substack{F'\in\theta^{-1}(F)\\ G'\in\theta^{-1}(G)}}\cone(-\mathcal{N}_P(F')\cup\mathcal{N}_P(G'))=\cone(-\mathcal{N}_Q(F)\cup\mathcal{N}_Q(G)) \ ,\] from which we conclude by using \cref{prop:coeurcoeur}.
\end{proof}

\begin{proposition} \label{coroll:coarseninguniversal} Let $P$ and $Q$ be two polytopes such that $\mathcal{H}_Q\subset \mathcal{H}_P$, and suppose that they are both positively oriented by the same vector $\vec v$. For each $H\in\mathcal{H}_P$, we choose a normal vector $\vec d_H$ such that $\langle \vec d_H, \vec v \rangle >0$. We have
\begin{eqnarray*}
(F,G) \in \Ima \triangle_{(Q,\vec v)} 
&\iff& \forall H \in \mathcal{H}_P , \ \exists i , \ \langle \vec F_i, \vec d_H \rangle < 0  \text{ or } \exists j , \ \langle \vec G_j, \vec d_H \rangle > 0 \ .
\end{eqnarray*} 
\end{proposition}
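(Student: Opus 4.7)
The strategy is to apply \cref{thm:universalformula} directly to $Q$ (which is positively oriented by $\vec v$ by hypothesis) and then to extend the quantification from its fundamental hyperplane arrangement $\mathcal{H}_Q$ to the finer arrangement $\mathcal{H}_P$. Applying \cref{thm:universalformula} to $Q$ yields
\[ (F,G) \in \Ima \triangle_{(Q,\vec v)} \iff \forall H \in \mathcal{H}_Q, \ \exists i, \langle \vec F_i, \vec d_H \rangle < 0 \text{ or } \exists j, \langle \vec G_j, \vec d_H \rangle > 0. \]
Since $\mathcal{H}_Q \subset \mathcal{H}_P$, the direction $(\Leftarrow)$ of the desired equivalence is immediate: a disjunction holding for every $H \in \mathcal{H}_P$ a fortiori holds for every $H \in \mathcal{H}_Q$, so we can invoke \cref{thm:universalformula} for $Q$.

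For the direction $(\Rightarrow)$, suppose $(F,G) \in \Ima \triangle_{(Q,\vec v)}$ and fix $H \in \mathcal{H}_P$; we must verify the disjunction. By \cref{prop:coeurcoeur} applied to $Q$, our hypothesis translates to $\vec v \in \cone(-\mathcal{N}_Q(F) \cup \mathcal{N}_Q(G))$. Let $C_Q$ denote the chamber of $\mathcal{H}_Q$ containing $\vec v$ and $C_P$ the chamber of $\mathcal{H}_P$ containing $\vec v$; since $\mathcal{H}_Q \subset \mathcal{H}_P$, we have $C_P \subset C_Q$. By \cref{prop:chamberinvariance} applied to $Q$, every vector in $C_Q$ produces the same bot-top diagonal of $Q$ as $\vec v$, so $C_Q \subset \cone(-\mathcal{N}_Q(F) \cup \mathcal{N}_Q(G))$; in particular $C_P \subset \cone(-\mathcal{N}_Q(F) \cup \mathcal{N}_Q(G))$.

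We now follow the argument in the proof of \cref{thm:universalformula} verbatim, with the cone $\cone(-\mathcal{N}_Q(F) \cup \mathcal{N}_Q(G))$ in place of $\cone(-\mathcal{N}_P(F) \cup \mathcal{N}_P(G))$. Since $C_P$ is a full-dimensional open cone of $\mathcal{H}_P$ contained in $H^+$, the inclusion just established forces $\cone(-\mathcal{N}_Q(F) \cup \mathcal{N}_Q(G)) \cap H^+ \neq \emptyset$, hence some generator of this cone pairs strictly positively with $\vec d_H$. Discarding the basis vectors of the orthogonal complement of the affine hull of $Q$ (which are orthogonal to $\vec d_H$), we conclude that either $\langle \vec F_i, \vec d_H \rangle < 0$ for some $i$ or $\langle \vec G_j, \vec d_H \rangle > 0$ for some $j$, yielding the required disjunction. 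The main obstacle is conceptual rather than computational: one must check that the proof of \cref{thm:universalformula} transposes cleanly when the outward pointing normals $\vec F_i, \vec G_j$ come from the faces of $Q$ while the chambers belong to the strictly finer arrangement $\mathcal{H}_P$, a point which follows from \cref{prop:chamberinvariance} applied to $Q$ together with the inclusion $C_P \subset C_Q$.
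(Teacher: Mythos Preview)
Your proof is correct and follows essentially the same route as the paper: both use the inclusion $C_P\subset C_Q$ (from $\mathcal{H}_Q\subset\mathcal{H}_P$) together with \cref{prop:coeurcoeur} and \cref{prop:chamberinvariance} to obtain $C_P\subset\cone(-\mathcal{N}_Q(F)\cup\mathcal{N}_Q(G))$, and then deduce the disjunction for every $H\in\mathcal{H}_P$; the paper phrases the forward direction contrapositively (assuming $\cone(-F,G)\subset\overline{H^-}$ and reaching a contradiction with $C_P\subset H^+$), while you argue directly, but the content is the same.
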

\begin{proof} 
We denote by $C_P$ (resp. $C_Q$) the chamber of $\mathcal{H}_P$ (resp. $\mathcal{H}_Q$) containing $\vec v$. Since $\mathcal{H}_Q\subset \mathcal{H}_P$, we have $C_P\subset C_Q$. As in the proof of \cref{thm:universalformula}, we write $\cone(-F,G)\coloneqq \cone(-\mathcal{N}_Q(F)\cup\mathcal{N}_Q(G))$ and we have $(F,G) \in \Ima\triangle_{(Q,\vec v)} \iff C_Q \cap \cone(-F,G)\neq \emptyset \iff C_Q \subset \cone(-F,G)$. For the first implication ($\implies$), suppose that there exists an $H\in\mathcal{H}_P$ such that $\cone(-F,G)\subset \overline{H^{-}}=:\{x \in \RR^n | \ \langle \vec d_H, x \rangle \leq 0\}$. Since $C_P\subset C_Q \subset \cone(-F,G)$, we have $C_P\subset \overline{H^{-}}$, which is impossible. The reverse implication ($\impliedby$) follows immediately from \cref{thm:universalformula} since $\mathcal{H}_Q\subset\mathcal{H}_P$.
\end{proof}

One can thus compute the universal formula for a polytope $P$ and apply it \emph{mutatis mutandis} to any polytope $Q$ whose normal fans coarsens the one of $P$. Alternatively, one can apply the coarsening projection via \cref{prop:coarseningcommutes}. Depending on the polytopes under consideration, one approach or the other might give a simpler combinatorial description.

\section{Realizations of the operahedra} \label{section:realizations}

In this section we define the operahedra, the family of polytopes that will be the center of attention for the rest of the paper. These polytopes range from the associahedra to the permutohedra. Their face lattices are described by the combinatorics of planar nested trees. Via the line graph construction, these correspond to tubed clawfree block graphs, and the operahedra are thus instances of graph-associahedra \cite{CarrDevadoss06}. We define integer-coordinate realizations of the operahedra by the same procedure as for J.-L. Loday's realizations of the associahedra \cite{Loday04a}. Their fundamental geometric properties are described in \cref{prop:PropertiesKLoday}. The standard weight realizations were already studied by V. Pilaud in \cite{PilaudSignedTree13}, but the construction given here is different.

\subsection{What is an operahedron?} \label{subsec:whatis}

Let us consider the set $\PT{n}$ of reduced planar rooted trees with $n$ internal vertices, for $n\geq 1$, that is trees where each internal vertex is at least bivalent. 
We denote the set of internal vertices of a tree $t\in \PT{n}$ by $V(t)$ and its set of internal edges by $E(t)$, and we label them as pictured in \cref{fig:treeandnesting}: starting from the root, we traverse around the tree in clockwise direction, numbering a vertex (resp. an edge) only the first time we see it.

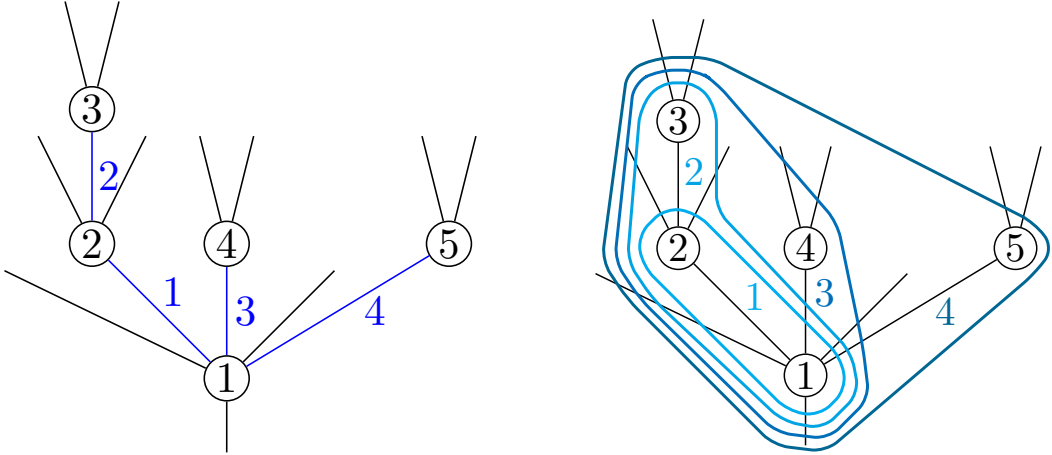
\begin{figure}[h!]
\centering
\resizebox{0.4\linewidth}{!}{
\begin{tikzpicture}[scale=1.2]
       \node (E) [circle,draw=black,minimum size=4mm,inner sep=0.1mm] at (-0,0) {\small $1$};
       \node (F) [circle,draw=black,minimum size=4mm,inner sep=0.1mm] at (-1,1) {\small $2$};
       \node (A) [circle,draw=black,minimum size=4mm,inner sep=0.1mm] at (-1,2) {\small $3$};
       \node (q) [circle,draw=black,minimum size=4mm,inner sep=0.1mm] at (0,1) {\small $4$};
       \node (r) [circle,draw=black,minimum size=4mm,inner sep=0.1mm] at (1.65,1) {\small $5$};
       \node (x) [circle,draw=none,minimum size=4mm,inner sep=0.1mm] at (-0.4,0.64) {\color{blue} \small $1$};
       \node (y) [circle,draw=none,minimum size=4mm,inner sep=0.1mm] at (-0.875,1.5) {\color{blue} \small $2$};
       \node (u) [circle,draw=none,minimum size=4mm,inner sep=0.1mm] at (0.14,0.5) {\color{blue} \small $3$};
       \node (v) [circle,draw=none,minimum size=4mm,inner sep=0.1mm] at (1.1,0.5) {\color{blue} \small $4$};
       \draw[-] (0.8,0.8) -- (E)--(-1.65,0.8); 
       \draw[-] (-1.2,2.8) -- (A)--(-0.8,2.8); 
       \draw[-] (-0.2,1.8) -- (q)--(0.2,1.8);   
       \draw[-] (1.85,1.8) -- (r)--(1.45,1.8); 
       \draw[-] (E)--(0,-0.55); 
       \draw[-] (-1.4,1.8) -- (F)--(-0.6,1.8);   
       \draw[-,draw=blue] (E)--(F) node {};
       \draw[-,draw=blue] (E)--(q) node {};
       \draw[-,draw=blue] (E)--(r) node {};
       \draw[-,draw=blue] (F)--(A) node {};
\end{tikzpicture}}
\quad \quad \quad 
\resizebox{0.4\linewidth}{!}{
\begin{tikzpicture}[scale=1.2]
       \node (E)[circle,draw=black,minimum size=4mm,inner sep=0.1mm] at (-0,0) {\small $1$};
       \node (F) [circle,draw=black,minimum size=4mm,inner sep=0.1mm] at (-1,1) {\small $2$};
       \node (A) [circle,draw=black,minimum size=4mm,inner sep=0.1mm] at (-1,2) {\small $3$};
       \node (q) [circle,draw=black,minimum size=4mm,inner sep=0.1mm] at (0,1) {\small $4$};
       \node (r) [circle,draw=black,minimum size=4mm,inner sep=0.1mm] at (1.65,1) {\small $5$};
       \node (x) [circle,draw=none,minimum size=4mm,inner sep=0.1mm] at (-0.4,0.62) {\color{Cyan} \small $1$};
       \node (y) [circle,draw=none,minimum size=4mm,inner sep=0.1mm] at (-0.875,1.6) {\color{Cerulean} \small $2$};
       \node (u) [circle,draw=none,minimum size=4mm,inner sep=0.1mm] at (0.15,0.65) {\color{NavyBlue} \small $3$};
       \node (v) [circle,draw=none,minimum size=4mm,inner sep=0.1mm] at (1.1,0.5) {\color{MidnightBlue} \small $4$};
       \draw[-] (0.8,0.8) -- (E)--(-1.65,0.8); 
       \draw[-] (-1.2,2.8) -- (A)--(-0.8,2.8); 
       \draw[-] (-0.2,1.8) -- (q)--(0.2,1.8);   
       \draw[-] (1.85,1.8) -- (r)--(1.45,1.8); 
       \draw[-] (E)--(0,-0.55); 
       \draw[-] (-1.4,1.8) -- (F)--(-0.6,1.8);   
       \draw[-] (E)--(F) node {};
       \draw[-] (E)--(q) node  {};
       \draw[-] (E)--(r) node {};
       \draw[-] (F)--(A) node {};
       \draw [Cyan,rounded corners,thick] (0.11,-0.32) -- (-0.14,-0.28) -- (-1.28,0.86) -- (-1.32,1.1) --  (-1.1,1.32) -- (-0.86,1.28) -- (0.28,0.14) -- (0.32,-0.11) -- cycle;
       \draw [Cerulean,rounded corners,thick] (0.14,-0.42) -- (-0.18,-0.36) -- (-1.2,0.6) -- (-1.4,0.9) -- (-1.3,2.1) -- (-1.15,2.3) -- (-0.85,2.3) -- (-0.7,2.1) --  (-0.7,1.3) -- (0.36,0.18) -- (0.42,-0.14) -- cycle;
       \draw [NavyBlue,rounded corners,thick] (0.18,-0.5) -- (-0.23,-0.45) -- (-1.3,0.6) -- (-1.5,0.9) --  (-1.35,2.3) --  (-1.15,2.4) --  (-0.85,2.4) --  (-0.7,2.3) --  (.25,1.2) -- (0.45,0.23) -- (0.5,-0.18) -- cycle;
       \draw [MidnightBlue,rounded corners,thick] (0.16,-0.6) -- (-0.25,-0.55) -- (-1.4,0.6) -- (-1.6,0.9) --  (-1.4,2.4) --  (-1.15,2.5) --  (-0.75,2.5) --  (1.8,1.2) --  (1.95,1) --  (1.8,0.8) -- cycle;
\end{tikzpicture} }
\caption{A tree $t\in\PT{5}$ with our labeling conventions and one of its maximal nestings.}
\label{fig:treeandnesting}
\end{figure} 

The leaves and root (resp. the leaf edges and root edge) are not considered part of the set $V(t)$ (resp. $E(t)$), and from now on, the word "vertex" (resp. "edge") will refer exclusively to internal vertices (resp. edges).
Moreover, we will abuse terminology and use the terms \emph{leaves} and \emph{root}, as well as the terms \emph{inputs} and \emph{output}, to designate the leaf edges and root edge, respectively. 

Any subset of edges $N\subset E(t)$ defines a subgraph of $t$ whose edges are $N$ and whose vertices are all the vertices adjacent to an edge in $N$. We call this graph the \emph{closure} of $N$. 

\begin{definition}[Nest] A \emph{nest} of a tree $t \in \PT{n}$ is a non-empty set of edges $N \subset E(t)$ whose closure is a connected subgraph of $t$.
\end{definition}
Every nest $N$ thus defines a subtree $t(N)$ of $t$ by adjoining to its closure all the edges, leaves or root adjacent to its vertices. We call it the \emph{induced subtree of $N$}. 

\begin{definition}[Nesting] \label{def:nesting} A \emph{nesting} of a tree $t \in \PT{n}$ is a set $\mathcal{N}=\{N_i\}_{i\in I}$ of nests such that 
\begin{enumerate}
    \item the \emph{trivial nest} $E(t)$ is in $\mathcal{N}$,
    \item for every pair of nests $N_i\neq N_j$, we have either $N_i \subsetneq N_j$, $N_j \subsetneq N_i$ or $N_i \cap N_j = \emptyset$, and
    \item if $N_i \cap N_j = \emptyset$ then no edge of $N_i$ is adjacent to an edge of $N_j$.
\end{enumerate}
\end{definition}
Two nests that satisfy Conditions (2) and (3) are said to be \textit{compatible}. We naturally represent a nesting by circling the closure of each nest as in Figure \ref{fig:treeandnesting}. We denote by $\mathcal{N}(t)$ the set of nestings of a tree $t$. We notice that for a \emph{corolla} $t \in \PT{1}$ we have $\mathcal{N}(t) = \emptyset$. We call \emph{nested tree} a pair $(t,\mathcal{N})$ made up of a tree and a nesting.

\begin{definition}[Lattice of nestings] We denote by $(\mathcal{N}(t),\subset)$ the poset of nestings of a tree $t \in \PT{n}$ ordered by inclusion, together with a maximal element. 
\end{definition}

\begin{remark} 
\label{rem:linegraph} 
As explained in detail in \cite[Section 3.4]{Ward19}, a nesting of a tree $t$ is the same as a tubing \cite[Definition 2.2]{CarrDevadoss06} of the line graph \cite[Chapter 8]{Harary69} of the closure of $E(t)$. 
Note that the leaves and the root are not taken into account, so any tree $t'$ with the same internal structure as $t$ has the same line graph. 
\end{remark}

\begin{definition}[Edge contraction] The \emph{contraction of an edge} $e$ connecting vertices $v_1$ and $v_2$ consists in deleting $e$ and collapsing $v_1$ and $v_2$ to a new vertex $v$ having as inputs the union of the inputs of $v_1$ and $v_2$. 
\end{definition}
\begin{figure}[h!]
\centering
    \resizebox{0.7\linewidth}{!}{
    \begin{tikzpicture}
    \node (E)[circle,draw=black,minimum size=4mm,inner sep=0.1mm] at (0,0) {\small $1$};
    \node (F) [circle,draw=black,minimum size=4mm,inner sep=0.1mm] at (-1,1) {\small $2$};
    \node (A) [circle,draw=black,minimum size=4mm,inner sep=0.1mm] at (-1,2) {\small $3$};
    \node (q) [circle,draw=black,minimum size=4mm,inner sep=0.1mm] at (0,1) {\small $4$};
    \node (r) [circle,draw=black,minimum size=4mm,inner sep=0.1mm] at (1.65,1) {\small $5$};
    \draw[-] (0.8,0.8) -- (E)--(-1.65,0.8); 
    \draw[-] (-1.2,2.8) -- (A)--(-0.8,2.8); 
    \draw[-] (-0.2,1.8) -- (q)--(0.2,1.8);   
    \draw[-] (1.85,1.8) -- (r)--(1.45,1.8); 
    \draw[-] (E)--(0,-0.55); 
    \draw[-] (-1.4,1.8) -- (F)--(-0.6,1.8);   
    \draw[-] (E)--(F) node {};
    \draw[-,thick] (E)--(q) node  {};
    \draw[-,thick] (E)--(r) node {};
    \draw[-] (F)--(A) node {};

    \draw [NavyBlue,rounded corners,thick] (-0.15,-0.4) -- (-0.4,-0.25) -- (-0.4,1.2) -- (-0.15,1.4) -- (1.8,1.4) -- (2,1.1) -- (2,0.85) -- (0.4,-0.25) -- (0.15,-0.4) -- cycle;
    \end{tikzpicture} 
    \quad \resizebox{0.04\linewidth}{!}{\raisebox{2em}{$\longrightarrow$}}\quad \quad
    \begin{tikzpicture}
        \node (E)[circle,draw=black,minimum size=4mm,inner sep=0.1mm] at (0,0) {\small $1$};
        \node (F) [circle,draw=black,minimum size=4mm,inner sep=0.1mm] at (-1,1) {\small $2$};
        \node (A) [circle,draw=black,minimum size=4mm,inner sep=0.1mm] at (-1,2) {\small $3$};
        \draw[-] (0.8,0.8) -- (E)--(-1.65,0.8); 
        \draw[-] (-1.2,2.8) -- (A)--(-0.8,2.8); 
        \draw[-] (-0.2,0.8) -- (E)--(0.2,0.8);   
        \draw[-] (1.85,0.8) -- (E)--(1.45,0.8); 
        \draw[-] (E)--(0,-0.55); 
        \draw[-] (-1.4,1.8) -- (F)--(-0.6,1.8);   
        \draw[-] (E)--(F) node {};
        \draw[-] (F)--(A) node {};
\end{tikzpicture} 
}
\caption{Contraction of the edges in a nest.}
\label{fig:contraction}
\end{figure}
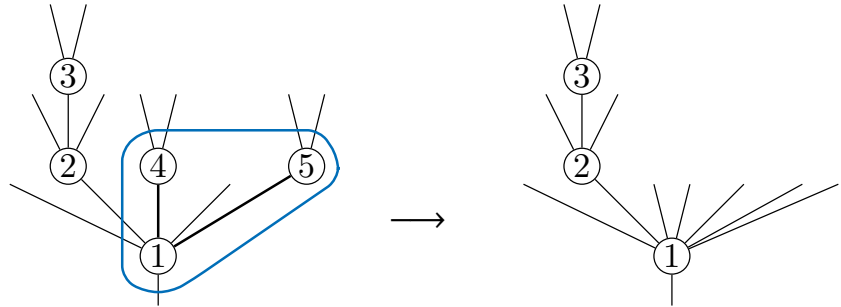 
We observe that trees are stable under edge contraction. Given a nested tree $(t,\mathcal{N})$, the \emph{contraction of a nest} $N \in \mathcal{N}$ consists in the contraction of all the edges of $t(N)$, as pictured in \cref{fig:contraction}. 

\begin{definition}[Maximal nesting] A nesting is \emph{maximal} if it has maximal cardinality. 
\end{definition}

We denote by $\mathcal{MN}(t) \subset \mathcal{N}(t)$ the set of maximal nestings. If $t \in \PT{n}$ and $\mathcal{N} \in \mathcal{MN}(t)$, we have $|\mathcal{N}|=|E(t)|=n-1$. We call \emph{fully nested tree} a nested tree $(t,\mathcal{N})$ where $\mathcal{N}$ is maximal. 

\begin{definition} \label{def:notationgeq}
For any subset of edges $S\subset E(t)$ of a nested tree $(t,\mathcal{N})$, we denote by $\mathcal{N}(S)$ the set of nests of $\mathcal{N}$ containing $S$.
\end{definition}
    
By definition of a nesting, the set $\mathcal{N}(e)$ is totally ordered by inclusion, for any edge $e\in E(t)$. For a maximal nesting $\mathcal{N} \in \mathcal{MN}(t)$, the assignment $e \mapsto \min\mathcal{N}(e)$ defines a bijection between $E(t)$ and $\mathcal{N}$.

\begin{samepage}
\begin{definition}[Poset of maximal nestings] \label{def:order2}
We denote by  $(\mathcal{MN}(t),<)$ the poset generated by the transitive closure of the covering relations
\begin{center}
\resizebox{0.25\linewidth}{!}{
\quad
\begin{tikzpicture}
    \node (t3)[circle,draw=black,minimum size=4mm,inner sep=0.1mm] at (0,0) {\small $t_1$};
    \node (t2) [circle,draw=black,minimum size=4mm,inner sep=0.1mm] at (0,1) {\small $t_2$};
    \node (t1) [circle,draw=black,minimum size=4mm,inner sep=0.1mm] at (0,2) {\small $t_3$};  
    \draw[-] (t3)--(t2) node {};
    \draw[-] (t2)--(t1) node {};
    \draw [Plum,rounded corners,thick] (-0.15+2,0.7) -- (-0.3+2,0.9) -- (-0.3+2,2.1) -- (-0.15+2,2.3) -- (0.15+2,2.3) -- (0.3+2,2.1) -- (0.3+2,0.9) -- (0.15+2,0.7) -- cycle;

    \node (B) at (1,1) {$\prec$};

    \node (b3)[circle,draw=black,minimum size=4mm,inner sep=0.1mm] at (2,0) {\small $t_1$};
    \node (b2) [circle,draw=black,minimum size=4mm,inner sep=0.1mm] at (2,1) {\small $t_2$};
    \node (b1) [circle,draw=black,minimum size=4mm,inner sep=0.1mm] at (2,2) {\small $t_3$};  
    \draw[-] (b3)--(b2) node {};
    \draw[-] (b2)--(b1) node {};
    \draw [Plum,rounded corners,thick] (1.85-2,-0.3) -- (1.7-2,-.1) -- (1.7-2,1.1) -- (1.85-2,1.3) -- (2.15-2,1.3) -- (2.3-2,1.1) -- (2.3-2,-.1) -- (2.15-2,-.3) -- cycle;
\end{tikzpicture}} \quad\quad \raisebox{2.75em}{and}\quad\quad 
\resizebox{0.4\linewidth}{!}{
\raisebox{1em}{\begin{tikzpicture}
    \node (t3)[circle,draw=black,minimum size=4mm,inner sep=0.1mm] at (0,0) {\small $t_1$};
    \node (t2) [circle,draw=black,minimum size=4mm,inner sep=0.1mm] at (0.71,0.71) {\small $t_3$};
    \node (t1) [circle,draw=black,minimum size=4mm,inner sep=0.1mm] at (-0.71,0.71) {\small $t_2$};  
    \draw[-] (t3)--(t1) node {};
    \draw[-] (t2)--(t3) node {};
    \draw [Plum,rounded corners,thick] (0.11,-0.32) -- (-0.14,-0.28) -- (-0.99,0.57) -- (-1.03,0.81) -- (-0.81,1.03) -- (-0.57,0.99)  -- (0.28,0.14) -- (0.32,-0.11) -- cycle;

    \node (B) at (1.5,0.5) {$\prec$};

    \node (b3)[circle,draw=black,minimum size=4mm,inner sep=0.1mm] at (3,0) {\small $t_1$};
    \node (b2) [circle,draw=black,minimum size=4mm,inner sep=0.1mm] at (3+0.71,0.71) {\small $t_3$};
    \node (b1) [circle,draw=black,minimum size=4mm,inner sep=0.1mm] at (3-0.71,0.71) {\small $t_2$};  
    \draw[-] (b3)--(b1) node {};
    \draw[-] (b2)--(b3) node {};
    \draw [Plum,rounded corners,thick] (3-0.32,-0.11) -- (3-0.28,0.14) -- (3+0.57,0.99) -- (3+0.81,1.03) -- (3+1.03,0.81) -- (3+0.99,0.57) -- (3+0.14,-0.28) -- (3-0.11,-0.32) -- cycle;
\end{tikzpicture}}} \raisebox{2.75em}{  ,} 
\end{center} 
where $t_1,t_2$ and $t_3$ are trees.
\end{definition}
\end{samepage}

On the set of \emph{linear trees}, i.e. trees where each vertex is connected to at most two edges, this order relation specializes to the \emph{Tamari order} \cite{Tamari51}. This can be seen via the bijection between the set of maximal nestings of a linear tree with $n$ vertices and the set of planar binary trees with $n$ leaves shown in \cref{fig:bijections}. 

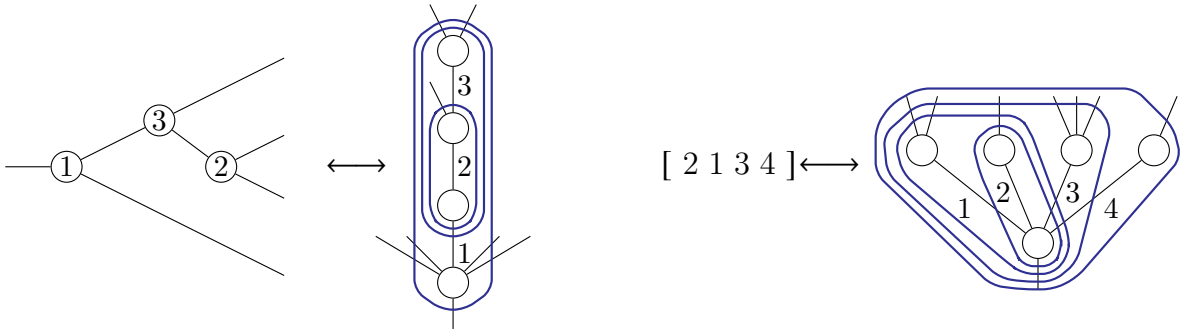
\begin{figure}[h!]
\resizebox{1\linewidth}{!}{
\begin{tikzpicture}
    \node (b1)[circle,draw=none,minimum size=4mm,inner sep=0.1mm] at (-2,3) {};
    \node (b2)[circle,draw=none,minimum size=4mm,inner sep=0.1mm] at (-2,2) {};
    \node (b3) [circle,draw=none,minimum size=4mm,inner sep=0.1mm] at (-2,1) {};
    \node (b4) [circle,draw=none,minimum size=4mm,inner sep=0.1mm] at (-2,0) {};
    \node (b5) [circle,draw=none,minimum size=4mm,inner sep=0.1mm] at (-6,1.5) {};

    \node (c2) [circle,draw=black,minimum size=4mm,inner sep=0.1mm] at (-3,1.5) {\small $2$}; 
    \node (c1) [circle,draw=black,minimum size=4mm,inner sep=0.1mm] at (-3.8,2.1) {\small $3$}; 
    \node (c3) [circle,draw=black,minimum size=4mm,inner sep=0.1mm] at (-5,1.5) {\small $1$}; 
    \draw[-] (b1)--(c1) node {};
    \draw[-] (b2)--(c2) node {};
    \draw[-] (b3)--(c2) node {};
    \draw[-] (b4)--(c3) node {};
    \draw[-] (c2)--(c1) node {};
    \draw[-] (c3)--(c1) node {};
    \draw[-] (c3)--(b5) node {};

    \node (B) at (-1.25,1.5) {$\longleftrightarrow$};

    \node (x1) [circle,draw=none,minimum size=4mm,inner sep=0.1mm] at (0.15,2.55) {\small $3$};
    \node (x2) [circle,draw=none,minimum size=4mm,inner sep=0.1mm] at (0.15,1.5) {\small $2$};
    \node (x3) [circle,draw=none,minimum size=4mm,inner sep=0.1mm] at (0.15,0.38) {\small $1$};

    \node (t4)[circle,draw=black,minimum size=4mm,inner sep=0.1mm] at (-0,0) {};
    \node (t3)[circle,draw=black,minimum size=4mm,inner sep=0.1mm] at (0,1) {};
    \node (t2) [circle,draw=black,minimum size=4mm,inner sep=0.1mm] at (0,2) {};
    \node (t1) [circle,draw=black,minimum size=4mm,inner sep=0.1mm] at (0,3) {};

    \draw[-] (t4)--(t3) node {};
    \draw[-] (t3)--(t2) node {};
    \draw[-] (t2)--(t1) node {};

    \draw[-] (-0.3,3.6) -- (t1)--(0.3,3.6); 
    \draw[-] (-0.3,2.6) -- (t2); 
    \draw[-] (-1,0.6) -- (t4)--(-0.6,0.6); 
    \draw[-] (0.6,0.6) -- (t4)--(1,0.6); 

    \draw[-] (0,-0.6) -- (t4);

    \draw [Blue,rounded corners,thick] (-0.15,0.7) -- (-0.3,0.9) -- (-0.3,2.1) -- (-0.15,2.3) -- (.15,2.3) -- (.3,2.1) -- (.3,.9) -- (.15,.7) -- cycle;
    \draw [Blue,rounded corners,thick] (-0.15,0.6) -- (-0.4,0.8) -- (-0.4,3.1) -- (-0.15,3.3) -- (.15,3.3) -- (0.4,3.1) -- (.4,.8) -- (.15,.6) -- cycle;
    \draw [Blue,rounded corners,thick] (-0.15,-0.35) -- (-0.5,-0.1) -- (-0.5,3.15) -- (-0.15,3.4) -- (.15,3.4) -- (0.5,3.15) -- (0.5,-0.1) -- (0.15,-0.35) -- cycle;
    
\end{tikzpicture} \quad \quad \quad \raisebox{1.2em}{
\begin{tikzpicture}
    \node (A) at (-4,1) {$[ \ 2 \ 1 \ 3  \ 4 \ ]$};

    \node (B) at (-2.7,1) {$\longleftrightarrow$};

    \node (x1) [circle,draw=none,minimum size=4mm,inner sep=0.1mm] at (-0.95,0.45) {\small $1$};
    \node (x2) [circle,draw=none,minimum size=4mm,inner sep=0.1mm] at (-0.45,0.65) {\small $2$};
    \node (x3) [circle,draw=none,minimum size=4mm,inner sep=0.1mm] at (0.45,0.65) {\small $3$};
    \node (x4) [circle,draw=none,minimum size=4mm,inner sep=0.1mm] at (0.95,0.45) {\small $4$};

    \node (t5)[circle,draw=black,minimum size=4mm,inner sep=0.1mm] at (-0,0) {};

    \node (t1)[circle,draw=black,minimum size=4mm,inner sep=0.1mm] at (-1.5,1.2) {};
    \node (t2) [circle,draw=black,minimum size=4mm,inner sep=0.1mm] at (-0.5,1.2) {};
    \node (t3) [circle,draw=black,minimum size=4mm,inner sep=0.1mm] at (0.5,1.2) {};  
    \node (t4) [circle,draw=black,minimum size=4mm,inner sep=0.1mm] at (1.5,1.2) {};  

    \draw[-] (t1)--(t5) node {};
    \draw[-] (t2)--(t5) node {};
    \draw[-] (t3)--(t5) node {};
    \draw[-] (t4)--(t5) node {};

    \draw[-] (-1.7,1.9) -- (t1)--(-1.3,1.9); 
    \draw[-] (-0.5,1.9) -- (t2); 
    \draw[-] (0.2,1.9) -- (t3)--(0.5,1.9); 
    \draw[-] (0.8,1.9) -- (t3); 
    \draw[-] (1.8,1.9) -- (t4);
    \draw[-] (0,-0.6) -- (t5);

    \draw [Blue,rounded corners,thick] (-0.2,-0.3) -- (-0.3,-0.1) -- (-0.75,0.9) -- (-0.85,1.3) -- (-0.55,1.55) -- (-0.25,1.45) -- (0.3,0.1) -- (0.3,-0.1) -- (0.2,-0.3) -- cycle;
    
    \draw [Blue,rounded corners,thick] (-0.2,-0.4) -- (-0.4,-0.25) -- (-1.75,0.9) -- (-1.85,1.3) -- (-1.55,1.65) -- (-0.25,1.65) -- (-0.1,1.45) -- (0.4,0.1) -- (0.4,-0.15) -- (0.2,-0.4) -- cycle;

    \draw [Blue,rounded corners,thick] (-0.2,-0.5) -- (-0.5,-0.45) -- (-1.95,0.9) -- (-2,1.4) -- (-1.55,1.8) -- (-0.25,1.8) -- (0.75,1.8) -- (0.95,1.5) -- (0.5,-0.25) -- (0.2,-0.5) -- cycle;

    \draw [Blue,rounded corners,thick] (-0.2,-0.6) -- (-0.6,-0.55) -- (-2.1,0.9) -- (-2.1,1.5) -- (-1.9,1.8) -- (-1.55,2) -- (-0.25,2) -- (1.25,2) -- (1.75,1.5) -- (1.85,1.1) -- (0.5,-0.45) -- (0.2,-0.6) -- cycle;

\end{tikzpicture}}}
\caption{Bijections between maximal nestings of a linear tree and planar binary trees and between maximal nestings of a 2-leveled tree and permutations.}
\label{fig:bijections}
\end{figure}

On the set of \emph{2-leveled trees}, i.e. trees where all the edges are adjacent to the same vertex, this order specializes to the weak Bruhat order. This can be seen via the bijection between the set of maximal nestings of a 2-leveled tree with $n+1$ vertices and the elements of the symmetric group of order $n$ shown in \cref{fig:bijections}. For a 2-leveled tree $t \in \mathrm{PT}_{n+1}$ and a maximal nesting $\mathcal{N} \in \mathcal{MN}(t)$, we construct a permutation $\sigma \in \mathbb{S}_n$ in the following way. First, for each $i \in E(t)$ we write $N_i\coloneqq \min\mathcal{N}(i) \in \mathcal{N}$. Then, the image of the permutation $\sigma$ is the unique ordered sequence $(\sigma(1),\ldots,\sigma(n))$ such that $|N_{\sigma(j)}|<|N_{\sigma(j+1)}|$ for all $j \in \{1,\ldots,n-1\}$. A covering relation in \cref{def:order2} between two maximal nestings $\mathcal{N}$ and $\mathcal{N}'$ then corresponds precisely to a covering relation of the weak Bruhat order between the associated permutations $\sigma$ and $\sigma'$. 

\begin{definition}[Operahedron]
An \emph{operahedron} is a polytope whose face lattice is isomorphic to the dual $(\mathcal{N}(t),\subset^{\emph{op}})$ of the lattice of nestings of a planar tree $t \in \PT{n}$, for any $n\geq 1$.
\end{definition}

The operahedron corresponding to a tree $t\in\PT{n}$ is of dimension $n-2$ (by convention, the empty set has dimension -1). The face corresponding to a nested tree $(t,\mathcal{N})$ has codimension $|\mathcal{N}|-1$, the number of non-trivial nests of $\mathcal{N}$. The oriented 1-skeleton of an operahedron gives the Hasse diagram of the poset of maximal nestings $(\mathcal{MN}(t),<)$. 

\begin{remark} Following \cref{rem:linegraph}, one can see that the operahedra are a special class of graph-associahedra, as defined in \cite{CarrDevadoss06}: they are associated to line graphs of trees, that is, clawfree block graphs \cite[Theorem 8.5]{Harary69}. Hence they are also a special class of hypergraph polytopes as defined in \cite{DP11}.
\end{remark}

\begin{remark} \label{rem:lattices} It would be interesting to know whether or not the posets $(\mathcal{MN}(t),<)$ are lattices. As studied in \cite{BarnardMcConville18}, the poset of maximal tubings of a graph do not form a lattice in general. For linear and 2-leveled trees, we have lattices isomorphic to the Tamari and weak Bruhat order lattices, respectively. For the other operahedra, comparison with the calculations of \cite[Section 6.1]{BarnardMcConville18} shows that we indeed have lattices up to dimension 3.
\end{remark}

\subsection{Loday realizations of the operahedra} \label{realizations} 

\begin{definition}[Weighted fully nested tree]
A \emph{weighted fully nested tree} is a triple $(t,\mathcal{N},\omega)$ made up of a fully nested tree with $n$ vertices together with a weight $\omega= (\omega_1, \ldots, \omega_n) \in \ZP^n$. We say that the weight $\omega$ has \emph{length} $n$.
\end{definition}

Let us fix a weighted fully nested tree $(t,\mathcal{N},\omega)$. For any edge $i\in E(t)$, we consider the two subtrees $t_1$ and $t_2$ of $t(\mathrm{min}\mathcal{N}(i))$ such that $i$ is the root of $t_1$ and a leaf of $t_2$. In other words, $t_1$ and $t_2$ are obtained by cuting the tree $t(\min\mathcal{N}(i))$ at the edge $i$. We define the two sums
\[\alpha_i:=\sum_{j \in V(t_1)}\omega_j \quad \text{and} \quad \beta_i:=\sum_{j \in V(t_2)}\omega_j \ .\] Multiplying these two numbers together for each edge of $t$, we obtain the following point
\[M(t, \mathcal{N},\omega) \coloneqq \big(\alpha_1\beta_1, \alpha_2\beta_2, \ldots, \alpha_{n-1}\beta_{n-1}\big)\in \ZZ^{n-1}\ . \]
\begin{remark} We will use the notations $\alpha_i$ and $\beta_i$ for brevity, even though these numbers depend on the tree $t$, the nesting $\mathcal{N}$ and the weight $\omega$. This dependence will be implicit but should be clear from the context.
\end{remark}
\begin{definition}[Loday realization of the operahedra] \label{def:Lodayreal}
For any $n\geq 1$, and for any tree $t \in \PT{n}$, the \emph{Loday realization of weight $\omega$ of the operahedron} is the  polytope
\[ P_{(t,\omega)} \coloneqq \conv \big\{M(t, \mathcal{N},\omega)\mid \mathcal{N} \in \mathcal{MN}(t) \big\}\subset \RR^{n-1}\ .\]
\end{definition}
The Loday realization of the operahedron associated to the standard weight $(1, \ldots, 1)$ is simply denoted by $P_t$. Some three-dimensional examples are shown in \cref{figure4}. For any corolla $t \in \PT{1}$, we adopt the following convention: the polytope $P_{(t,\omega)}$, with weight $\omega=(\omega_1)$ of length $1$, is made up of one point in $0$-dimensional space. 

\begin{figure}[h!]
    \centering
    \includegraphics[width=0.32\linewidth]{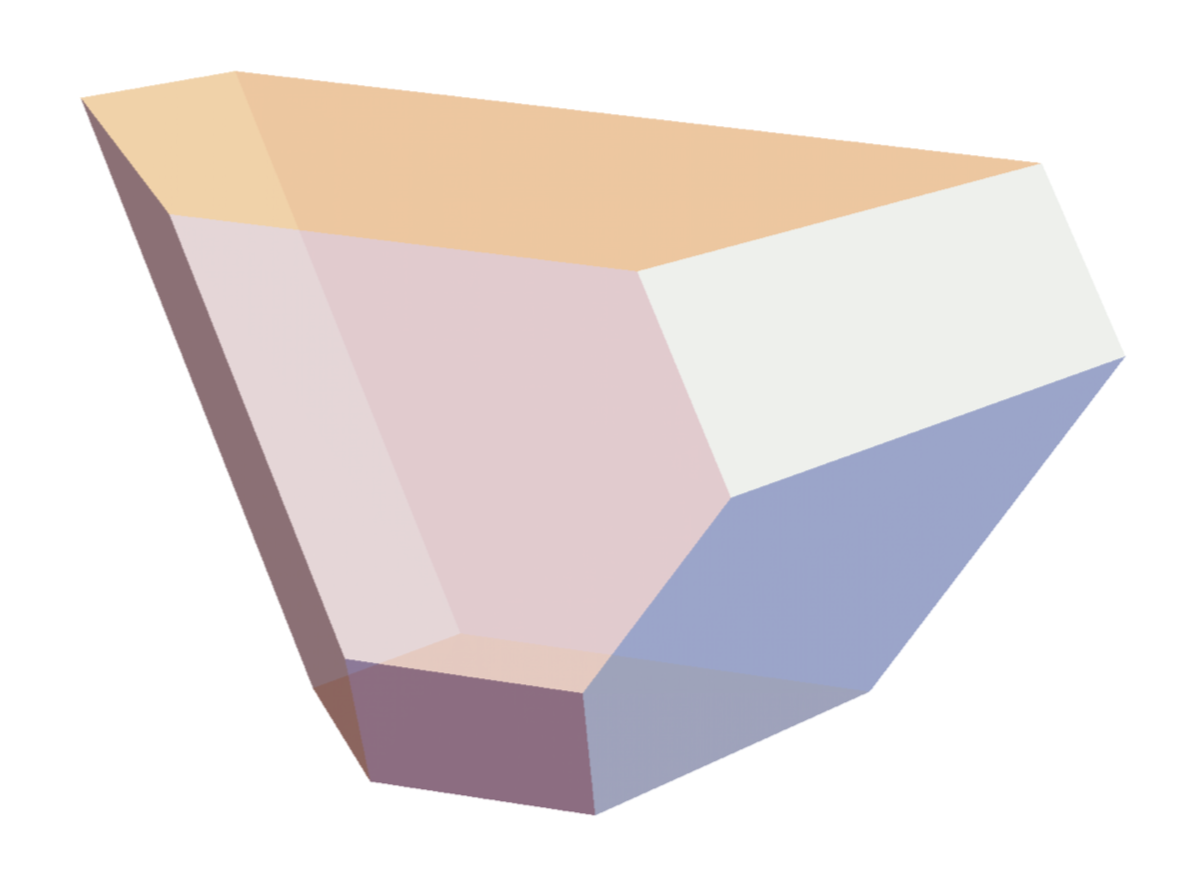} \ 
    \includegraphics[width=0.3\linewidth]{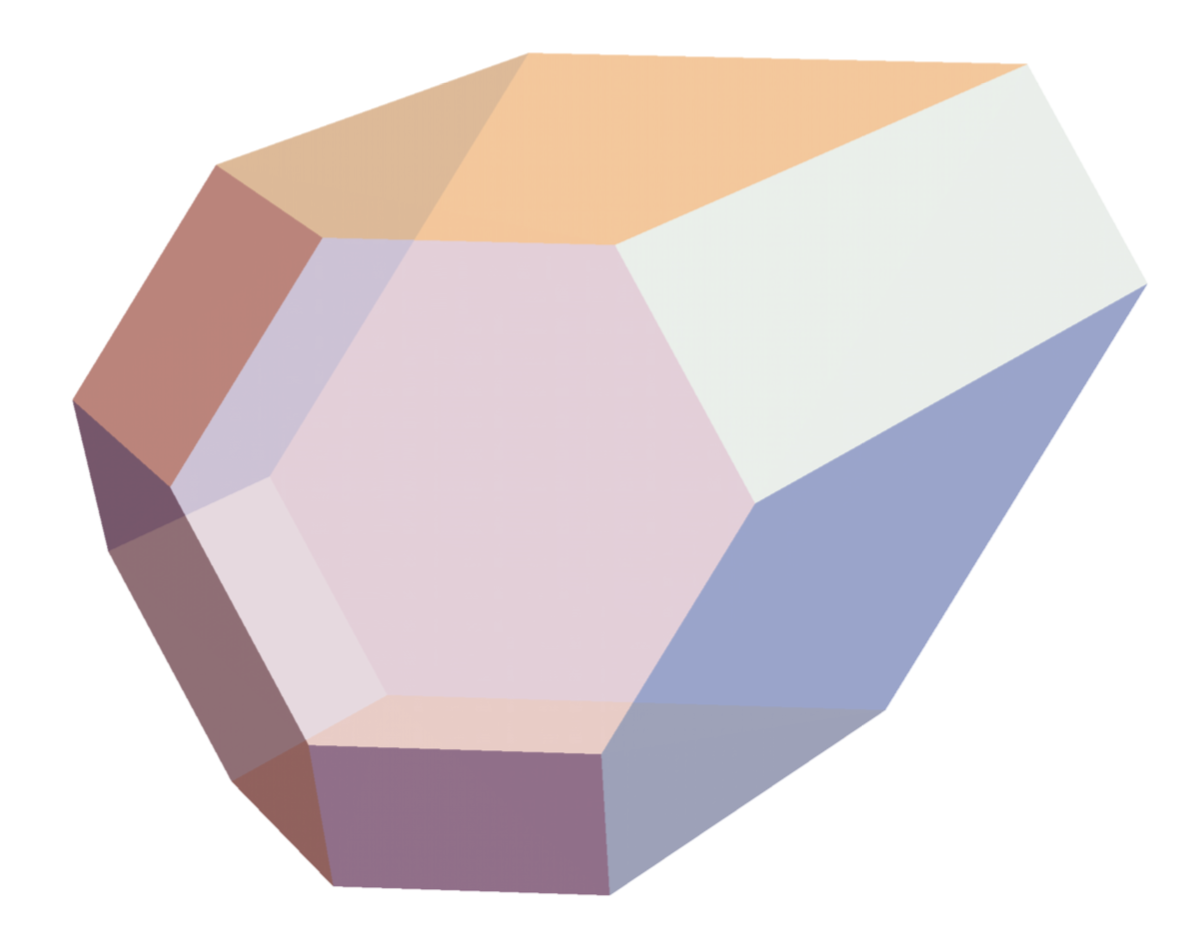} \ \ 
    \includegraphics[width=0.25\linewidth]{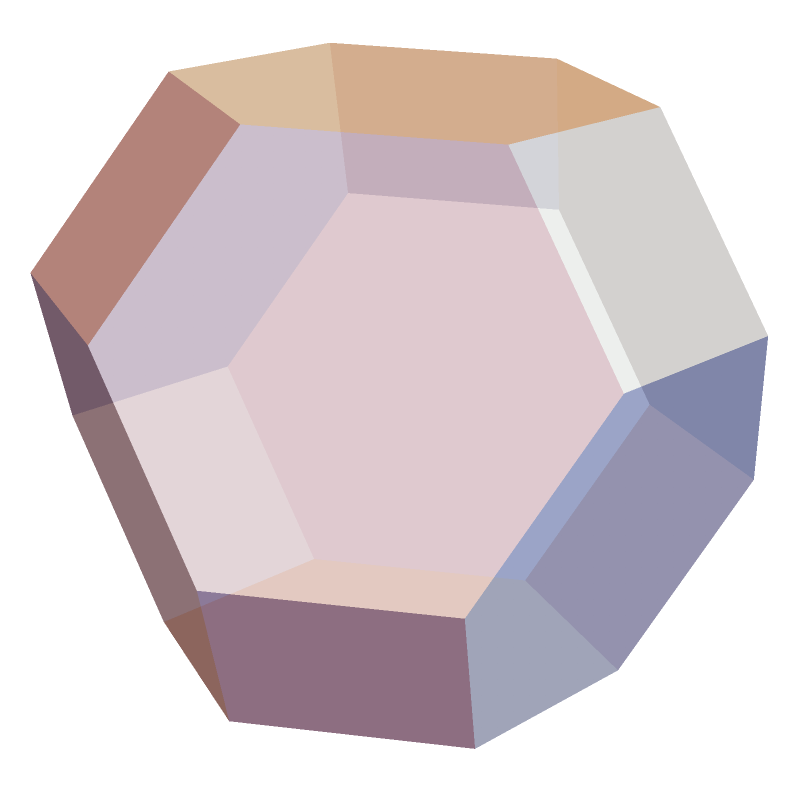}
\caption{Standard weight Loday realizations of some 3-dimensional operahedra, from the associahedron (left) to the permutahedron (right).}
\label{figure4}
\end{figure}

The following proposition summarizes the fundamental properties of Loday realizations of the operahedra and show in particular that they are indeed realizations of the operahedra. In the case of standard weight realizations, it should be compared with \cite[Theorem 56]{PilaudSignedTree13}.

\begin{proposition} \label{prop:PropertiesKLoday}
For any tree $t \in \PT{n}$ and for any weight $\omega$ of length $n$, the Loday realization of the operahedron $P_{(t,\omega)}$ satisfies the following properties. 
\begin{enumerate}
\item It is contained in the hyperplane with equation
\[
\sum_{i \in E(t)} x_i = \sum_{ \substack{k,\ell \in V(t) \\ k<\ell }} \omega_k \omega_l\ .
\]

\item Let $N$ be a non-trivial nest of $t$. For any maximal nesting $\mathcal{N}$, the point $M(t,\mathcal{N},\omega)$ is contained in the half-space defined by the inequality
\[
    \sum_{i \in E(t(N))} x_i \geq \sum_{ \substack{k,\ell \in V(t(N)) \\ k<\ell }} \omega_k \omega_l\ , 
\]
with equality if and only if $N \in \mathcal{N}$. 

\item The polytope $P_{(t,\omega)}$ is the intersection of the hyperplane of \emph{(1)} and the half-spaces of \emph{(2)}. 

\item The face lattice $(\La(P_{(t,\omega)}) \ , \subset)$ is isomorphic to the dual of the lattice of nestings $(\mathcal{N}(t) \ , \subset^{\emph{op}})$. 

\item Any face of a Loday realization of an operahedron is isomorphic to a product of Loday realizations of operahedra of lower dimension, via a permutation of coordinates. 
\end{enumerate}
\end{proposition}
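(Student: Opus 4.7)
My plan is to follow the blueprint of J.-L. Loday's original proof for the associahedra \cite{Loday04a}, adapted to the combinatorics of trees and nestings used here. The backbone is a double-counting argument for (1) and (2), which then feeds into an inductive proof of (3), (4) and (5).

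For (1), I would expand each product as $\alpha_i \beta_i = \sum_{k,\ell} \omega_k \omega_\ell$, where $(k,\ell)$ runs over pairs lying on opposite sides of the edge $i$ in $t(\min \mathcal{N}(i))$. The key claim is that each unordered pair $\{k,\ell\}$ of vertices of $t$ contributes to exactly one edge in the total sum, namely the unique edge on the path from $k$ to $\ell$ whose minimal nest in $\mathcal{N}$ coincides with the minimal nest of $\mathcal{N}$ containing both $k$ and $\ell$ as vertices. This pairing is well-defined because the set of nests of $\mathcal{N}$ that contain $k$ and $\ell$ as vertices is totally ordered by inclusion, hence has a minimum. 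Summing recovers the identity of (1), independently of $\mathcal{N}$. For (2), the same argument restricted to $i \in E(t(N))$ gives the lower bound, and equality holds exactly when every such edge satisfies $\min \mathcal{N}(i) \subseteq N$, which is in turn equivalent to $N \in \mathcal{N}$; if $N \notin \mathcal{N}$, some edge must have $\min \mathcal{N}(i) \supsetneq N$, producing extra positive contributions from pairs with a vertex outside $V(t(N))$, forcing strict inequality.

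For (3), the inclusion of $P_{(t,\omega)}$ into the polytope $Q$ cut out by the hyperplane of (1) and the half-spaces of (2) is immediate from (1) and (2). The reverse inclusion I would establish by induction on the number of vertices of $t$, the base case of a corolla being trivial. For the inductive step, one argues that each facet of $Q$ corresponds to a minimal non-trivial nest $N$, and that the face of $Q$ where the corresponding inequality is saturated is affinely equivalent, via a permutation of coordinates, to the product of the Loday realizations associated to $t(N)$ and to the tree obtained from $t$ by contracting $N$ (equipped with the weight that assigns to the new vertex the total weight of $V(t(N))$). By the inductive hypothesis, this product coincides with the convex hull of the vertices of $P_{(t,\omega)}$ lying on the corresponding facet, and iterating down to the vertex level yields $Q = P_{(t,\omega)}$.

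Items (4) and (5) follow as byproducts of this inductive argument. By (2), each face of $P_{(t,\omega)}$ is determined by the set of tight inequalities it saturates, and this set is precisely a nesting of $t$; the correspondence is manifestly order-reversing, and a dimension count confirms it is a bijection, giving (4). For (5), extending the product decomposition exhibited at the facet level recursively along a nesting $\mathcal{M}$, by decomposing $t$ into the subtrees $t(N_i)$ and the corresponding contracted trees for each nest $N_i \in \mathcal{M}$, exhibits the associated face as a product of Loday realizations of lower-dimensional operahedra. I expect the main technical difficulty to lie in the careful bookkeeping of the weights carried by the contracted trees and of the coordinate permutation identifying each face with the product of smaller Loday realizations; this is precisely where the explicit formula for $M(t,\mathcal{N},\omega)$ must be shown compatible with the recursive decomposition.
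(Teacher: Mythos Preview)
Your proposal is correct and would go through, but it diverges from the paper's argument in two places worth noting.

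For (1) and (2) you use a direct double-counting of pairs $\{k,\ell\}$, in Loday's original style. The paper instead proves the identity $\sum_{i\in E(t(N))}\alpha_i\beta_i=\sum_{k<\ell\in V(t(N))}\omega_k\omega_\ell$ by induction on $|N|$, peeling off the unique edge $j$ with $\min\mathcal N(j)=N$ and splitting $t(N)$ at $j$ into two subtrees. Both approaches are short; the inductive one avoids having to verify that the edge on the $k$--$\ell$ path with $\min\mathcal N(i)=N_{k\ell}$ exists and is unique, which in your outline relies implicitly on the bijection $e\mapsto\min\mathcal N(e)$ and the nesting axioms.

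For (3) the difference is more substantial. You run an induction on $|V(t)|$, identifying each face $Q\cap H_N$ with a product of smaller Loday realizations and pulling the result back. The paper instead argues directly at the vertex level: any vertex of $Q$ saturates $n-2$ inequalities, and the \emph{key lemma} shows that two simultaneously saturated inequalities must come from compatible nests (by deriving a contradiction with the inequality for $N_1\cup N_2$ or $N_1\cap N_2$ when $N_1,N_2$ are incompatible). Hence the tight nests form a maximal nesting and the vertex is one of the $M(t,\mathcal N,\omega)$. This avoids any induction and makes (4) and (5) fall out immediately afterwards. Your route is legitimate, but be careful with the logical order: the claim that $Q\cap H_N$ \emph{equals} the product (rather than is merely contained in it) uses that inequalities for nests incompatible with $N$ are redundant on $H_N$, which is essentially the paper's compatibility lemma in disguise. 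For the bare inclusion $Q\subseteq P_{(t,\omega)}$ you only need $Q\cap H_N\subseteq\text{product}$, which is immediate since the product is cut out by a subset of the constraints; equality can then be recovered \emph{a posteriori} from (2). Also, drop the word ``minimal'' in ``minimal non-trivial nest'': facets correspond to all non-trivial nests, not just the inclusion-minimal ones.
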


\begin{proof} 

\leavevmode
\begin{enumerate}

\item  We show that every nest $N$ of a maximal nesting $\mathcal{N}$ satisfies the equation 
\[
\sum_{i \in E(t(N))} \alpha_i \beta_i = \sum_{ \substack{k,\ell \in V(t(N)) \\ k<\ell }} \omega_k \omega_l\ ,
\] 
by induction on the cardinality of $N$. The case when $|N|=1$ is clear. We suppose that every nest $N\in\mathcal{N}$ with $1\leq |N| \leq m-1$ satisfies the equation above. We consider now a nest $N$ with $|N|=m\geq 2$. We select $j \in N$ the unique edge such that $N=\min\mathcal{N}(j)$. Denoting by $t_1$ and $t_2$ the two subtrees of $t(N)$ having $j$ respectively as a root and a leaf, we have
\begin{eqnarray*}
    \sum_{i \in E(t(N))} \alpha_i \beta_i 
    &=& \alpha_j \beta_j + \sum_{i \in E(t_1)} \alpha_i \beta_i + \sum_{i \in E(t_2)} \alpha_i \beta_i \\
    &=& \left( \sum_{k \in V(t_1)}\omega_k \right) \left( \sum_{\ell \in V(t_2)}\omega_\ell \right) + \sum_{ \substack{k,\ell \in V(t_1) \\ k<\ell }} \omega_k \omega_l+ \sum_{ \substack{k,\ell \in V(t_2) \\ k<\ell }} \omega_k \omega_l \\
    &=&\sum_{ \substack{k,\ell \in V(t(N)) \\ k<\ell }} \omega_k \omega_l\ .
\end{eqnarray*}
Taking the trivial nest $N=E(t)$, which is contained in every maximal nesting, we obtain that every point $M(t,\mathcal{N},\omega)$ is contained in the hyperplane of (1). By convexity, the same is true for the entire polytope. 

\medskip

\item The proof of Point (1) shows that if the nest $N$ is in $\mathcal{N}$, then 
\[
\sum_{i \in E(t(N))} \alpha_i \beta_i = \sum_{ \substack{k,\ell \in V(t(N)) \\ k<\ell }} \omega_k \omega_l\ .
\]
Let us show that every nest $N \notin \mathcal{N}$ satisfies the strict inequality
\[
\sum_{i \in E(t(N))} \alpha_i \beta_i > \sum_{ \substack{k,\ell \in V(t(N)) \\ k<\ell }} \omega_k \omega_l\ ,
\]
by induction on the cardinality of $N$. The case when $|N|=1$ is clear. We suppose that every nest $N \notin \mathcal{N}$ with $1 \leq |N| \leq m-1$ satisfies the strict inequality above. We consider now a nest $N\notin \mathcal{N}$ with $|N|=m \geq 2$. We select $j$ the unique edge such that $\min\mathcal{N}(j)=\min\mathcal{N}(N)$. It is clear that this edge exists and is unique. We denote by $t_1$ and $t_2$ the two subtrees of $t(\min\mathcal{N}(j))$ having $j$ respectively as a root and a leaf.

If we suppose that $j \notin N$, then $N \subset E(t_1)$ or $N \subset E(t_2)$ which contradicts the assumption that $\min\mathcal{N}(j)=\min\mathcal{N}(N)$. So we have $j\in N$. We denote by $t_1'$ and $t_2'$ the two subtrees of $t(N)$ having $j$ respectively as a root and a leaf. At least one of the inclusions $E(t_1')\subset E(t_1)$ or $E(t_2') \subset E(t_2)$ has to be strict, otherwise we would have $N=\min\mathcal{N}(N)\in \mathcal{N}$. Thus we have 
\begin{eqnarray*}
    \sum_{i \in E(t(N))} \alpha_i \beta_i 
    &=& \alpha_j \beta_j + \sum_{i \in E(t_1')} \alpha_i \beta_i  + \sum_{i \in E(t_2')} \alpha_i \beta_i\\
    &\geq& \left( \sum_{k \in V(t_1)}\omega_k \right) \left( \sum_{\ell \in V(t_2)}\omega_\ell \right) 
    + \sum_{ \substack{k,\ell \in V(t_1') \\ k<\ell }} \omega_k \omega_l
    + \sum_{ \substack{k,\ell \in V(t_2') \\ k<\ell }} \omega_k \omega_l   \\
    &>&\left( \sum_{k \in V(t_1')}\omega_k \right) \left( \sum_{\ell \in V(t_2')}\omega_\ell \right) 
    + \sum_{ \substack{k,\ell \in V(t_1') \\ k<\ell }} \omega_k \omega_l
    + \sum_{ \substack{k,\ell \in V(t_2') \\ k<\ell }} \omega_k \omega_l   \\
    &=&  \sum_{ \substack{k,\ell \in V(t(N)) \\ k<\ell }} \omega_k \omega_l  \ .
\end{eqnarray*}

\medskip 

\item Let us denote by $P$ the polytope defined by the intersection of the hyperplane of (1) and the half-spaces of (2). We show that $P_{(t,\omega)}=P$.
The first inclusion ($\subset$) is obvious. For the reverse inclusion, we observe first that the equations of Point (2), with equality, define the facets of $P$. Let $x=(x_1,\ldots,x_{n-1})$ be a point in the intersection of two facets $F_1$ and $F_2$ of $P$. We claim that the associated nests $N_1$ and $N_2$ are compatible.
We suppose to the contrary that the nests $N_1$ and $N_2$ are not compatible. We are in one of the following two situations.
First, we suppose that $N_1 \cap N_2=\emptyset$. We have by the proof of Point (1) that
\begin{equation*}
\begin{aligned}[l]
\sum_{i \in E(t(N_1\cup N_2))} x_i 
&= \sum_{i \in E(t(N_1))} x_i + \sum_{i \in E(t(N_2))} x_i  \\
    &= \sum_{ \substack{k,\ell \in V(t(N_1)) \\ k<\ell }} \omega_k \omega_l + \sum_{ \substack{k,\ell \in V(t(N_2)) \\ k<\ell }} \omega_k \omega_l \\
    &< \sum_{ \substack{k,\ell \in V(t(N_1)) \\ k<\ell }} \omega_k \omega_l + \sum_{ \substack{k,\ell \in V(t(N_2)) \\ k<\ell }} \omega_k \omega_l + \sum_{ \substack{k \in V(t(N_1))\backslash V(t(N_2)) \\ \ell \in V(t(N_2))\backslash V(t(N_1))}} \omega_k \omega_l  \\
&= \sum_{ \substack{k,\ell \in V(t(N_1\cup N_2)) \\ k<\ell }} \omega_k \omega_l \ ,
\end{aligned}
\end{equation*}
which contradicts the inequality of Point (2) associated to the nest $N_1\cup N_2$.
Second, we suppose that $N_1 \cap N_2\neq \emptyset$. We have
\begin{equation*}
\begin{aligned}[l]
\sum_{i \in E(t(N_1\cap N_2))} x_i 
&= \sum_{i \in E(t(N_1))} x_i + \sum_{i \in E(t(N_2))} x_i  - \sum_{i \in E(t(N_1\cup N_2)} x_i \\
&\leq \sum_{ \substack{k,\ell \in V(t(N_1)) \\ k<\ell }} \omega_k \omega_l 
+ \sum_{ \substack{k,\ell \in V(t(N_2)) \\ k<\ell }} \omega_k \omega_l 
- \sum_{ \substack{k,\ell \in V(t(N_1\cup N_2)) \\ k<\ell }} \omega_k \omega_l\\
&= \sum_{ \substack{k,\ell \in V(t(N_1\cap N_2)) \\ k<\ell }} \omega_k \omega_l - \sum_{ \substack{k\in V(t(N_1))\backslash V(t(N_2)) \\ \ell \in V(t(N_2))\backslash V(t(N_1)) }} \omega_k \omega_l  
        \quad \\
&< \sum_{ \substack{k,\ell \in V(t(N_1\cap N_2)) \\ k<\ell }} \omega_k \omega_l \ , 
\end{aligned}
\end{equation*}
which contradicts the inequality of Point (2) associated to the nest $N_1\cap N_2$. So, $N_1$ and $N_2$ must be compatible. 

A vertex $M$ of $P$ is solution to a system of $n-1$ independent linear equations, one of type (1) and $n-2$ of type (2). By the preceding argument, the associated nests are compatible and assemble into a maximal nesting $\mathcal{N}$ of $t$. Also the point $M(t,\mathcal{N},\omega)$ is solution to this system of equations, in virtue of Point (1) and Point(2). Since the solution is unique, this implies $M=M(t,\mathcal{N},\omega)$ and therefore $P=P_{(t,\omega)}$.

\medskip

\item  Point (2) shows that the facets of $P_{(t,\omega)}$ correspond bijectively to nestings with only one non-trivial nest: the facet labeled by the non-trivial nest $N$ is the convex hull of the points $M(t,\mathcal{N},\omega)$ such that $N\in\mathcal{N}$. Any face of $P_{(t,\omega)}$ of codimension $k$, with $0\leq k \leq n-2$ is defined as the intersection of $k$ facets. The preceding description of facets gives that the set of faces of codimension $k$ is bijectively labeled by nestings with $k$ non-trivial nests: the face corresponding to such a nesting $\mathcal{N}$ is the convex hull of the points $M(t,\mathcal{N}',\omega)$ such that $\mathcal{N}'\subset \mathcal{N}$. With the top dimensional face labeled by the trivial nest, the statement is proved.

\medskip 

\item The proof of the preceding point shows that it is enough to treat the case of the facets. Let $\mathcal{N}$ be a nesting of $t$ with only one non-trivial nest $N$. We contract the nest $N$ to obtain a new tree $\overline{t}$. We define a weight $\overline{\omega}$ on $\overline{t}$ as follows. As a result of the contraction of $N$, the set $V(t(N))$ is reduced in $\overline{t}$ to a single vertex $j$. We assign to this vertex the sum of the weights of the vertices of $N$, that is, \[ \overline{\omega_j}:=\sum_{k \in V(t(N))}\omega_k \ .\] 
Each of the other vertices keeps its weight, only the label changes: for each $i\neq j$ in $V(\overline{t})$, we define $\overline{\omega_i}:=\omega_\ell$ for the corresponding vertex $\ell$ in $V(t)$.
We also define a weight $\widetilde{\omega}$ on $t(N)$, considered as an independent tree that we denote by $\widetilde{t}$. This weight is simply the restriction of $\omega$ to the vertices of $t(N)$ : for each $i \in V(\widetilde{t})$, we define $\widetilde{\omega}_i\coloneqq \omega_\ell$ for the corresponding vertex $\ell$ in $V(t)$. 

We write $|E(\overline{t})|=p$ and $|E(\widetilde{t})|=q$ and we renumber the edges of $\widetilde{t}$ from $p+1$ to $p+q$. We denote by $\sigma: E(\overline{t})\sqcup E(\widetilde{t})\to E(t)$ the permutation mapping each (just renumbered) edge of $\overline{t}$ and $\widetilde{t}$ to its label in $t$. We obtain a $(p,q)$-shuffle. We claim that the image of $P_{(\overline{t},\overline{\omega})}\times P_{(\widetilde{t},\widetilde{\omega})}\hookrightarrow P_{(t,\omega)}$ under the isomorphism 
\begin{align*}
\begin{array}{rccc}
\Theta\  : &  \RR^{p}\times \RR^{q} &\xrightarrow{\cong} &\RR^{n-1}\\
&(x_1, \ldots, x_{p})\times (x_{p+1}, \ldots, x_{p+q}) & \mapsto& 
(x_{\sigma^{-1}(1)}, \ldots, x_{\sigma^{-1}(n-1)})
\end{array}
\end{align*}
is equal to the facet defined by the weighted nested tree $(t,\mathcal{N},\omega)$. To see this, we recall that the two polytopes $P_{(\overline{t},\overline{\omega})}$ and $P_{(\widetilde{t},\widetilde{\omega})}$ are defined by the equations
\begin{eqnarray*}
    \sum_{i \in E(\overline{t})} x_i \overset{(a)}{=} \sum_{ \substack{k,\ell \in V(\overline{t}) \\ k<\ell }} \overline{\omega}_k \overline{\omega}_l
\quad \text{and} \quad 
    \sum_{i \in E(\widetilde{t})} x_i \overset{(b)}{=} \sum_{ \substack{k,\ell \in V(\widetilde{t}) \\ k<\ell }} \widetilde{\omega}_k \widetilde{\omega}_l\ ,
\end{eqnarray*}
respectively, and observe that the image under $\Theta$ of the pair of equations $(a)+(b)$ and $(b)$ consists exactly in the equations defining the facet labelled by $(t, \mathcal{N},\omega)$.
\end{enumerate}
\end{proof}

Restricting to linear trees, we recover the weighted Loday realizations of the associahedra of \cite[Proposition 1]{MTTV19}. Restricting to 2-leveled trees, we obtain weighted realizations of the permutahedron. To end this section, let us point out some geometric properties of the Loday realizations of the operahedra. They can be visualized on the examples of \cref{figure4}.

\begin{corollary} \label{coroll:geometricproperties} For any tree $t \in \PT{n}$ and for any weight $\omega$ of length $n$, the Loday realizations of the operahedron $P_t$ and $P_{(t,\omega)}$ satisfy the following geometric properties. 
    \begin{enumerate}
        \item The polytope $P_{(t,\omega)}$ is obtained by successive truncations of a simplex.
        \item The polytope $P_{(t,\omega)}$ is obtained from the classical permutahedron by parallel translation of its facets, i.e. it is a \emph{generalized permutahedron} in the sense of \cite{Postnikov09}.
        \item The polytope $P_t$ is obtained by deleting inequalities from the facet description of the classical permutahedron, i.e. it is a \emph{removahedron} in the sense of \cite{Pilaud14}. 
    \end{enumerate}
\end{corollary}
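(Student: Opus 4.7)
The approach is to read all three statements off from the explicit facet description of $P_{(t,\omega)}$ provided by Proposition~\ref{prop:PropertiesKLoday}. Points~(2) and~(3) follow essentially by inspection, while~(1) is an induction whose key geometric ingredient is already implicit in the proof of Proposition~\ref{prop:PropertiesKLoday}(3).

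For (2), the inequalities in Proposition~\ref{prop:PropertiesKLoday}(2)--(3) all take the form $\sum_{i \in S} x_i \geq c_S(\omega)$ with $S = E(t(N)) \subseteq \{1,\ldots,n-1\}$ indexed by a non-trivial nest $N$ of $t$, together with the ambient hyperplane $\sum_{i=1}^{n-1} x_i = c$. This is exactly the defining form of a generalized permutahedron in the sense of~\cite{Postnikov09}: the normal fan of $P_{(t,\omega)}$ coarsens the braid arrangement, and $P_{(t,\omega)}$ is obtained from the classical permutahedron by parallel translation of facets. For (3), specialising to $\omega = (1,\ldots,1)$ the right-hand side collapses to $c_S = \binom{|V(t(N))|}{2} = \binom{|S|+1}{2}$, which is precisely the right-hand side of the classical permutahedron's facet inequality associated with $S \subseteq \{1,\ldots,n-1\}$. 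Since $\{E(t(N)) : N \text{ a non-trivial nest of } t\}$ is in general a proper subfamily of the non-empty subsets of $\{1,\ldots,n-1\}$, $P_t$ is recovered from the classical permutahedron by deleting the facet inequalities not of this form, i.e.\ as a removahedron in the sense of~\cite{Pilaud14}.

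For (1), I would proceed by induction on the number of truncations. Start from the intersection $\Delta^{(0)}$ of the hyperplane of Proposition~\ref{prop:PropertiesKLoday}(1) with the $n-1$ half-spaces coming from the singleton nests $\{i\}$, $i \in E(t)$; since the singletons are pairwise compatible, the same contradiction argument as in the proof of Proposition~\ref{prop:PropertiesKLoday}(3) shows that $\Delta^{(0)}$ is an $(n-2)$-simplex. Now enumerate the remaining non-trivial nests $N_1,\ldots,N_m$ in order of strictly increasing cardinality, and at step $k$ intersect $\Delta^{(k-1)}$ with the closed half-space defined by $N_k$. The main obstacle is to check that this operation is an honest face truncation, i.e.\ that the supporting hyperplane meets $\Delta^{(k-1)}$ in a face of $\Delta^{(k-1)}$. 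I would establish this by a compatibility/contradiction analysis on the vertices of $\Delta^{(k-1)}$ parallel to the one in the proof of Proposition~\ref{prop:PropertiesKLoday}(3): a vertex saturates the inequality of $N_k$ precisely when the nests labelling its incident facets are all compatible with $N_k$, and such vertices span a proper face. After the $m$-th truncation one recovers the full inequality system of Proposition~\ref{prop:PropertiesKLoday}, so $\Delta^{(m)} = P_{(t,\omega)}$. A shorter alternative, which bypasses the case analysis, is to observe via the line-graph description of Section~\ref{subsec:whatis} that operahedra are nestohedra and to invoke the classical iterated-truncation result for nestohedra~\cite{FeichtnerSturmfels05}.
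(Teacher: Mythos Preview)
Your arguments for~(2) and~(3) are correct and match the paper's in spirit; the paper phrases them dually via the normal fan (each nest $N$ contributes the ray $-\vec N$ where $\vec N$ is the characteristic vector of $N$, and for a 2-leveled tree every subset of $E(t)$ is a nest, giving the braid fan), but the content is the same.

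Your direct argument for~(1), however, has two genuine problems. First, singleton nests are \emph{not} pairwise compatible in general: two distinct edges of $t$ sharing a vertex give disjoint singleton nests that violate condition~(3) of Definition~\ref{def:nesting}. This does not affect the conclusion that $\Delta^{(0)}$ is a simplex---it is one simply because $\{\sum x_i = c,\ x_i \geq c_i\}$ is a simplex whenever $\sum c_i < c$---but your stated reason is wrong. Second, and more seriously, the truncation order is backward. The inequality for a nest $N$ with $|N|=k$ cuts off the face $\{x_i = c_i : i \in N\}$ of the simplex, which has dimension $n-2-k$; if you process nests by increasing cardinality you truncate high-dimensional faces first, and a later large nest may correspond to a face already removed as a subface of a previously truncated one. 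The standard order (Carr--Devadoss, Feichtner--Sturmfels) is by \emph{decreasing} cardinality, truncating vertices before the edges containing them. Relatedly, your description of the truncation condition (``the supporting hyperplane meets $\Delta^{(k-1)}$ in a face'') is not what you want: a face truncation requires the hyperplane to \emph{separate} a face from the remaining vertices, not to meet the polytope in a face. Your alternative route via the nestohedron literature is sound, and the paper's own one-line argument for~(1)---observing that the singleton nests already give the rays of the simplex's normal fan---is implicitly relying on exactly that body of results.
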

\begin{proof} 
    One can read off the normal fan of the operahedron $P=P_{(t,\omega)}$ in Points (1) and (2) of \cref{prop:PropertiesKLoday} as follows. 
    A face $F$ of codimension $k$ of $P$ is determined by a nesting $\mathcal{N}=\{N_j\}_{1\leq j \leq k+1}$ of $t$, where $N_{k+1}$ is the trivial nest. 
    For any nest $N_j \in\mathcal{N}$, we define its associated \emph{characteristic vector} $\vec N_j$ which has a 1 in position $i$ if $i \in N_j$ and 0 otherwise. 
    The normal cone of $F$ is then given by $\mathcal{N}_P(F)=\cone(-\vec N_1,\ldots,-\vec N_{k+1},\vec N_{k+1})$.
    If $t$ is a 2-leveled tree, all the subsets of edges define nests, and we have the normal fan of the permutahedron. 
    If $t$ is a tree which is not a 2-leveled tree, only some subsets of edges define nests. 
    Thus, we have Points (2) and (3) above. 
    For Point (1), we observe that the nests containing only one edge of $t$ define the normal fan of the standard simplex. 
\end{proof}

\section{The diagonal of the operahedra} \label{section:diagonal}

The main goal of this section is to compute the fundamental hyperplane arrangement of the permutahedron which, as we shall see, turns out to be a refinement of the braid arrangement. 
By the general theory of \cref{section:cellularappoximation}, any choice of a chamber in this arrangement then gives a cellular approximation of the diagonal of the permutahedron. 
Moreover, such a choice gives a cellular approximation of the diagonal for every operahedron (in fact, any generalized permutahedra), as well as an explicit combinatorial formula describing its cellular image. 
In contrast with the cases of the simplices, the cubes, and the associahedra, the combinatorics of the permutahedron are less constrained: many choices of chambers agree with the weak Bruhat order on the vertices, and the condition $\tp(F)\leq\bm(G)$ is no longer sufficient to characterize the image of the diagonal. 
We make a choice, motivated by the operadic structure that will appear in the next section. The formula thus obtained, which consists of complementary pairs of ordered partitions of $\{1,\ldots,n\}$, has interesting combinatorial properties.

\subsection{The fundamental hyperplane arrangement of the permutahedra}

Let us first recall from the proof of \cref{coroll:geometricproperties} above that a face $F$ of codimension $k$ of the operahedron $P=P_t$ is determined by a nesting $\mathcal{N}=\{N_j\}_{1\leq j \leq k+1}$ of $t$ where $N_{k+1}$ is the trivial nest. 
For any nest $N_j \in\mathcal{N}$, we define its associated \emph{characteristic vector} $\vec N_j$ which has a 1 in position $i$ if $i \in N_j$ and 0 otherwise.
The vectors $-\vec N_j$, $1 \leq j \leq k$ are outward pointing normal vectors for the facets defining $\mathcal{N}$, in the sense of \cref{def:normalpointing}.
Together with the vector $\vec N_{k+1}$, which forms a basis of the orthogonal complement of the affine hull of $P$, they define the normal cone 
\[ \mathcal{N}_P(F)=\cone\left(-\vec N_1,\ldots,-\vec N_k, -\vec N_{k+1}, \vec N_{k+1}\right) \ . \]

\begin{definition}[Trinary and boolean vectors] We say that a vector $\vec v  \in \mathbb{R}^{n-1}$ is a \emph{trinary vector} (resp. \emph{boolean}) if its coordinates are 0, 1 or -1 (resp. 0 or 1). 
\end{definition}

Let us recall that two nests $N_1$ and $N_2$ are said to be \emph{compatible} if they fulfill Conditions (2) and (3) of \cref{def:nesting}. Moreover, we say that they are \emph{linearly independent} if $\vec N_1$ and $\vec N_2$ are.

\begin{proposition} \label{prop:edges1}
Let $t \in \PT{n}$ and let us denote by $P=P_t$ the standard weight Loday realization of the operahedron. There is a surjection
\begin{eqnarray*}
    \begin{Bmatrix}
        \text{a set of } k \text{ compatible nests,} \\
        \text{a set of } l \text{ compatible nests,} \\
        \text{with } k+l=n-3 \text{ and }k,l\geq 0 \ , \\
        \text{mutually linearly independent and with the trivial nest}
    \end{Bmatrix}
    \twoheadrightarrow
    \begin{Bmatrix}
     \text{direction }\vec d \text{ of an edge} \\
     \text{of } P\cap\rho_z P, \text{ for some } z \in P 
    \end{Bmatrix}_{\big{/\sim}} \ ,
\end{eqnarray*}
where two directions in the target are identified if they are a scalar multiple of each other.
\end{proposition}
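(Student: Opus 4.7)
The plan is to deduce this proposition from \cref{prop:edgesPP} by translating its codimension-one condition into the combinatorial language of nests. First I would recall from the proof of \cref{coroll:geometricproperties} that a face of $P = P_t$ corresponding to a nesting $\mathcal{N}_F = \{N_1, \ldots, N_k, N_{\mathrm{triv}}\}$ has normal cone
\[\mathcal{N}_P(F) = \cone\bigl(-\vec N_1, \ldots, -\vec N_k, \ \pm\vec N_{\mathrm{triv}}\bigr),\]
where $\vec N$ denotes the characteristic vector of a nest $N$ and $N_{\mathrm{triv}}$ the trivial nest. For a second face $G$ with nesting $\mathcal{N}_G = \{M_1, \ldots, M_l, N_{\mathrm{triv}}\}$, this gives
\[\cone\bigl(-\mathcal{N}_P(F) \cup \mathcal{N}_P(G)\bigr) = \cone\bigl(\vec N_1, \ldots, \vec N_k, -\vec M_1, \ldots, -\vec M_l, \ \pm\vec N_{\mathrm{triv}}\bigr),\]
whose linear span in $\mathbb{R}^{n-1}$ coincides with that of the $k+l+1$ characteristic vectors $\{\vec N_j\} \cup \{\vec M_i\} \cup \{\vec N_{\mathrm{triv}}\}$.

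Second, I would observe that this cone has codimension one in $\mathbb{R}^{n-1}$ precisely when the above span has dimension $n-2$, and that the conditions in the statement ($k+l = n-3$, together with mutual linear independence) pin this situation down exactly: the $n-2$ vectors form a basis of a hyperplane. Conversely, starting from any such combinatorial data, the faces $F$ and $G$ are well-defined (subfamilies of compatible families are automatically compatible, so $\mathcal{N}_F$ and $\mathcal{N}_G$ are valid nestings), and \cref{prop:edgesPP} then produces the unique (up to sign) edge direction $\vec d$ orthogonal to the spanning hyperplane. This defines the map of the statement.

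Finally, I would prove surjectivity by a reduction argument. Given an edge direction, \cref{prop:edgesPP} furnishes some codimension-one pair $(F,G)$ of faces with nestings $\mathcal{N}_F$ and $\mathcal{N}_G$. From the spanning set of characteristic vectors of the associated cone, I would extract a basis of the $(n-2)$-dimensional span containing $\vec N_{\mathrm{triv}}$ (always possible by extending the linearly independent singleton $\{\vec N_{\mathrm{triv}}\}$ to a basis). Discarding the omitted $\vec N_j$'s and $\vec M_i$'s amounts to removing nests from $\mathcal{N}_F$ and $\mathcal{N}_G$; the result is again a pair of valid nestings defining larger faces $F' \supseteq F$ and $G' \supseteq G$, whose associated cone has the same linear span — and hence the same orthogonal edge direction — as before.

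The main obstacle I foresee is ensuring that this reduction preserves both the nesting property and the edge direction: the former follows because compatibility between nests is inherited by subsets, and the latter because the edge direction depends only on the linear span of the cone. Once these two points are settled, the dimension count $k' + l' + 1 = n - 2$ is automatic from the linear independence, and the reduced pair fits the combinatorial shape of the statement, completing the proof of surjectivity.
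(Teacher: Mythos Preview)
Your proposal is correct and follows exactly the route the paper takes: the paper's proof is the single sentence ``This follows from a direct application of \cref{prop:edgesPP},'' and what you have written is precisely that application spelled out in detail, including the translation of the codimension-one condition into the count $k+l+1=n-2$ and the basis-extraction step needed to land in the stated domain.
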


\begin{proof} This follows from a direct application of \cref{prop:edgesPP}.
\end{proof}

\begin{definition}[Support and length] The set of non-zero entries of a vector $\vec v \in \mathbb{R}^n$ is called its \emph{support} and the cardinality of this set is called its \emph{length}. 
\end{definition}

\begin{samepage}
\begin{proposition}[Direction of the edges of $P\cap\rho_z P$] \label{prop:charactedge}
Let $t\in\PT{n}$ and let $P=P_t$ be the standard weight Loday realization of the operahedron. Then, representatives for the equivalence classes of directions of the edges of $P\cap\rho_z P$, for all $z \in P$, are given by trinary vectors of $\mathbb{R}^{n-1}$ with the same number of $1$ and $-1$ and whose first non-zero coordinate is $1$.
\end{proposition}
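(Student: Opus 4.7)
By \cref{prop:edges1}, the plan is to begin from two sets of compatible nests $\mathcal{F}$ and $\mathcal{G}$, with $|\mathcal{F}|+|\mathcal{G}|=n-3$, mutually linearly independent together with the trivial nest $E(t)$. The edge direction $\vec d$ spans the one-dimensional orthogonal complement of $\mathrm{span}\{\vec N : N \in \mathcal{F} \cup \mathcal{G} \cup \{E(t)\}\}$ inside $\RR^{n-1}$, so $\vec d$ is the unique (up to scaling) vector satisfying $\sum_{i \in N} d_i = 0$ for every $N \in \mathcal{F} \cup \mathcal{G} \cup \{E(t)\}$.

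Next I will reformulate these constraints as a bipartite incidence condition. Since $\mathcal{F}' \coloneqq \mathcal{F} \cup \{E(t)\}$ is a nesting, each edge $i \in E(t)$ is contained in a unique minimal nest of $\mathcal{F}'$, which partitions $E(t)$ into \emph{bodies} $\{B_\alpha\}$; similarly $\mathcal{G}' \coloneqq \mathcal{G} \cup \{E(t)\}$ gives bodies $\{C_\beta\}$. A short inclusion-exclusion over the inclusion hierarchy of $\mathcal{F}'$ shows that the nest constraints are equivalent to the body constraints $\sum_{i \in B_\alpha} d_i = 0$ and $\sum_{i \in C_\beta} d_i = 0$. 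I then build the bipartite multigraph $H$ whose left vertex set is $\{B_\alpha\}$, whose right vertex set is $\{C_\beta\}$, and which has one edge for each $i \in E(t)$, joining the $\mathcal{F}'$-body of $i$ to its $\mathcal{G}'$-body. The vector $\vec d$ then lies precisely in the kernel of the unsigned vertex-edge incidence matrix of $H$.

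The final step is to identify this kernel with the cycle space of $H$: negating the rows indexed by right vertices converts the unsigned incidence matrix into the standard signed incidence matrix without changing the kernel, and the kernel of the signed incidence matrix of a graph is precisely its cycle space over $\RR$. Counting yields $|V(H)| = |\mathcal{F}'|+|\mathcal{G}'| = n-1$ and $|E(H)| = |E(t)| = n-1$, while the linear independence hypothesis forces the incidence matrix to have rank $n-2$, so $H$ is connected with one-dimensional cycle space, i.e.\ $H$ is unicyclic. The unique cycle has even length because $H$ is bipartite, and its cycle space generator assigns alternating $+1$ and $-1$ to the edges of the cycle and $0$ to all other edges of $H$. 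This is exactly a trinary representative of $\vec d$ with equal numbers of $+1$ and $-1$ entries; a final sign flip makes the first non-zero coordinate equal to $+1$. The only delicate point is the equivalence between the nest constraints and the body constraints, but this follows from a direct induction on the depth of nests within $\mathcal{F}'$ and $\mathcal{G}'$, using that distinct children of a common nest are disjoint.
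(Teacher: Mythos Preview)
Your argument is correct and takes a genuinely different route from the paper's proof. Both proofs begin the same way: they replace the nest vectors $\vec N$ by the characteristic vectors of the ``bodies'' (what the paper obtains by successive subtractions, getting lines with disjoint support summing to $(1,\ldots,1)$ on each side). From there the approaches diverge. The paper proves by a somewhat ad hoc induction on $n$ that the resulting system can be further reduced to one in which every row has length at most~$2$, and then reads off the trinary solution directly. You instead recognise the body constraints as the kernel of the incidence matrix of a bipartite multigraph $H$ on $n-1$ vertices and $n-1$ edges, observe that the linear-independence hypothesis forces $H$ to be connected and hence unicyclic, and conclude via the standard identification of the kernel with the cycle space of $H$. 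The bipartiteness then gives an even cycle and hence the balanced trinary form.

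Your approach is more conceptual and arguably cleaner: the trinary shape of $\vec d$ is explained by a structural fact (unicyclic bipartite graph) rather than obtained by row-reduction. It also makes the role of the linear-independence hypothesis transparent: it is exactly what forces connectedness of $H$ (equivalently, nonemptiness of every body). The paper's argument, on the other hand, is more self-contained and does not invoke any graph-theoretic vocabulary. One small point worth making explicit in your write-up: the equality $|V(H)|=|\mathcal{F}'|+|\mathcal{G}'|$ uses that every body is nonempty, which you can justify either combinatorially from Condition~(3) of \cref{def:nesting} (disjoint compatible nests have disconnected union) or, more economically, directly from the linear independence of the nest vectors in $\mathcal{F}'$ and in $\mathcal{G}'$.
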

\end{samepage}
    
\begin{proof} 
The space of solutions of the system of linear equations in the left hand side of the surjection in Proposition \ref{prop:edges1} is given by the kernel of the $(n-1) \times (n-1)$ boolean matrix 
\begin{eqnarray}  \label{system2}
\begin{pmatrix}
\text{---} & \vec N_1 & \text{---} \\
\text{---} & \vdots & \text{---} \\
\text{---} & \vec N_{k+1} & \text{---} \\
\text{---} & \vec N_1' & \text{---} \\
\text{---} & \vdots & \text{---} \\
\text{---} & \vec N_{l+1}' & \text{---} \\
\end{pmatrix} \ ,
\end{eqnarray}
where the vectors are written horizontally. The $k+1$ first (resp. $l+1$ last) lines are included in one another as elements of the boolean lattice $\{0,1 \}^{n-1}$. We can thus substract the line of minimal length to the $k$ (resp. $l$) others, then the line with second minimal length to the $k-1$ (resp. $l-1$) others, and so on until we obtain a family of $k+1$ (resp. $l+1$) lines with disjoint support, whose sum is $(1,\ldots, 1)$.

We claim that the system of linear equations obtained in this way is equivalent to a system where the length of each line is at most 2, that is, where each line has a 1 in at most two places. We proceed by induction on $n$. The case $n-1=2$ is clear. Let $n-1\geq 3$ and suppose that the result holds for every matrix of size $k\leq n-2$. Let $M$ be a matrix of size $n-1$ filling the hypothesis. 
\begin{enumerate}
\item Suppose that $M$ contains a line of length 1, that is a line $i$ with zeros everywhere except in place $j$. We can then reduce every non-zero element of the $j$th column to 0 and apply the induction hypothesis to the $(n-2) \times (n-2)$ matrix $M'$ obtained from $M$ by suppressing its $i$th line and $j$th column.
\item Suppose that no line of $M$ has length 1. 
\begin{enumerate}
\item Suppose that $k>l$. The length of the sum of the $k+1$ lines of the first group is at least $2k+2>k+l+2=n-1$, which is impossible. 
\item Suppose that $k=l$. The length of the sum of the $k+1$ lines of the first group is, as for the $l+1$ lines of the second group, exactly $2k+2$, which means that every line has length 2. This finishes the proof of the claim.
\end{enumerate}
\end{enumerate}
The kernel of (\ref{system2}) has dimension 1. Since the vector $(1,\ldots,1)$ is in the system, the coordinates of any non-zero vector in it sum to zero. By the preceding claim, it is a scalar multiple of a trinary vector with the same number of $1$ and $-1$, and whose first non-zero coordinate is $1$.
\end{proof}

\begin{corollary} \label{coroll:permuto}
Let $t\in\PT{n}$ be a 2-leveled tree, and let us denote by $P=P_t$ the standard weight Loday realization of the permutahedron. There is a bijection
\begin{eqnarray*}
    \begin{Bmatrix}
         \text{direction }\vec d \text{ of an edge} \\
         \text{of } P\cap\rho_z P, \text{ for some } z \in P 
    \end{Bmatrix}_{\big{/\sim}}
        \cong
    \begin{Bmatrix}
            \text{trinary vector of} \ \ \mathbb{R}^{n-1} \\
            \text{with the same number of }1\text{ and } -1 \\
            \text{whose first non-zero coordinate is }1
    \end{Bmatrix} \ ,
\end{eqnarray*}
where, in the first set, two linearly dependent directions are identified.
\end{corollary}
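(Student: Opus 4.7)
The plan is to combine the general characterization in \cref{prop:charactedge} with an explicit chain construction adapted to the permutahedron. The map from equivalence classes of edge directions to trinary vectors is well-defined by \cref{prop:charactedge}, which produces a trinary representative for each direction with equal numbers of $+1$'s and $-1$'s and with first non-zero coordinate equal to $+1$. This representative is unique under the normalization, so the map is injective. The substance of the corollary is surjectivity, which I would establish by constructing, for any such trinary vector $\vec d$, a pair of nestings of $t$ whose associated matrix (\ref{system2}) has $\vec d$ as the generator of its one-dimensional kernel.

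The key structural simplification in the 2-leveled case is that all edges of $E(t)$ share the bottom internal vertex, so any two edges are adjacent in the sense of \cref{def:nesting}. Consequently, any nonempty subset of $E(t)$ is a nest and Condition~(3) of \cref{def:nesting} forbids two non-trivial compatible nests from being disjoint. Thus the non-trivial nests of any nesting of $t$ form a chain under inclusion, and each of the two groups of compatible nests appearing in \cref{prop:edges1} is genuinely a chain.

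Given $\vec d$ with $+1$'s at positions $I^+ = \{p_1 < \cdots < p_a\}$, $-1$'s at $I^- = \{q_1 < \cdots < q_a\}$ and $0$'s at $I^0 = \{r_1 < \cdots < r_b\}$ (with $2a + b = n-1$ and $a \geq 1$), I would take as the first chain
\[
N_j = \{p_1, q_1, \ldots, p_j, q_j\} \text{ for } 1 \leq j \leq a, \quad N_{a+k} = N_a \cup \{r_1, \ldots, r_k\} \text{ for } 1 \leq k \leq b,
\]
and as the second chain
\[
N_j' = \{p_1, \ldots, p_j\} \cup \{q_2, \ldots, q_{j+1}\} \text{ for } 1 \leq j \leq a-1, \quad N_a' = \{1, \ldots, n-1\}.
\]
Each nest contains equal numbers of elements from $I^+$ and from $I^-$, so its characteristic vector is orthogonal to $\vec d$; hence $\vec d$ lies in the kernel of the associated matrix. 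A direct computation shows that the first chain reduces the solution space to $\{v : v_{q_j} = -v_{p_j},\, v_{r_k} = 0\}$, which is $a$-dimensional, while the second chain then forces $v_{p_j} = v_{p_{j+1}}$ for $1 \leq j \leq a-1$, leaving exactly the one-dimensional span of $\vec d$. Applying \cref{prop:edges1} to this pair of nestings produces an edge of $P \cap \rho_z P$ with direction $\vec d$, which yields surjectivity.

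The main obstacle, identified by small-case calculations, is the correct choice of second chain: a symmetric pairing such as $N_j' = \{p_1, \ldots, p_j, q_{a-j+1}, \ldots, q_a\}$ collapses to a single relation $v_{p_1} = v_{p_a}$ for all $j$ once one is restricted to the antisymmetric subspace carved out by the first chain, yielding only one independent constraint rather than the required $a-1$. The asymmetric pairing above is chosen precisely so that the successive increments $N_{j}' \setminus N_{j-1}'$ read $\{p_j, q_{j+1}\}$, giving the linearly independent relations $v_{p_j} = v_{p_{j+1}}$ that collapse the kernel to a line.
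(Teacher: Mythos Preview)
Your proof is correct and follows essentially the same strategy as the paper's: both constructions use one chain built from the ``symmetric'' pairs $\{p_i,q_i\}$ and another from the ``offset'' pairs $\{p_i,q_{i+1}\}$, the latter being precisely the idea needed to cut the kernel down to a line. The only cosmetic difference is the placement of the zero positions---you append them at the top of the first chain, whereas the paper inserts them at the bottom of the second chain---which has no effect on the argument. Your explicit two-step kernel computation (first reducing to the antisymmetric subspace, then collapsing via the offset increments) is in fact a bit more transparent than the paper's verification.
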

    
\begin{proof} We prove that every trinary vector on the right-hand side is a representative of some equivalence class of directions on the left-hand side. Let $\vec d \in \mathbb{R}^{n-1}$ be a vector having $p$ coordinates equal to 1, $p$ coordinates equal to -1 and $q$ coordinates equal to 0 with $p\geq 1, q \geq 0$ and $2p+q=n-1$. We construct a system of nested boolean vectors $\{\vec N_1, \ldots , \vec N_p, \vec N_1', \ldots , \vec N_{q+p}' \}$ that has $\vec d$ as solution. First label the pairs of $\{1,-1\}$ from left to right with $\{1,\ldots,p\}$ and the zeros, if there are any, from left to right with $\{1,\ldots,q\}$. Then,
\begin{enumerate}
    \item If $p=1$, go directly to Step (2). If $p\geq 2$, define $p-1$ boolean vectors $\{\vec N_1,\vec N_2,\ldots, \vec N_{p-1} \}$ by the following: the vector $\vec N_i$ has as support the columns of the $i$th first pairs of 1 and -1.
    \item[\refstepcounter{enumi}(2)] If $p=1$ and $q=0$, go directly to Step (3). Otherwise, define $q+p-1$ boolean vectors $\{\vec N_1',\vec N_2',\ldots, \vec N_{q+p-1}' \}$ by the following: the vector $\vec N_j'$ has as support the columns of the $j$th first zeros if $j\leq q$; otherwise for $j\geq q+1$ (that is, if $p\geq 2$) it has as support the columns of the $q$ zeros, the 1 of the first pair, the -1 of the $(j-q+1)$th pair, and if they exist (that is, if $p\geq 3$ and $j\geq q+2$) all the pairs from 2 to $j-q$.
    \item[\refstepcounter{enumi}(3)] Set $\vec N_p =\vec N_{q+p}' =(1,\ldots,1)$ and add the two vectors to the system.
\end{enumerate}
Choosing such vectors is possible since the permutahedron has as normal vectors to its facets every possible non-zero boolean vector, see Point (2) of \cref{prop:PropertiesKLoday}. It is clear from the construction that the vector $\vec d$ is a basis of the space of solutions to this system, see Figure \ref{fig:boolean}. 
\end{proof}

\begin{figure}[ht!]
\[   \vec d =  \begin{pmatrix}
    1 & 0 & -1 & -1 & 1 & 0 & 0 & -1 & 1 
\end{pmatrix} \]
\begin{eqnarray*}  
      \begin{pmatrix}
        \text{---} & \vec N_1 & \text{---} \rule{0pt}{10pt}\\
        \text{---} & \vec N_2 & \text{---} \rule{0pt}{10pt}\\
        \text{---} & \vec N_3 & \text{---} \rule{0pt}{10pt}\\
        \text{---} & \vec N_1' & \text{---} \rule{0pt}{10pt}\\
        \text{---} & \vec N_2' & \text{---} \rule{0pt}{10pt}\\
        \text{---} & \vec N_3' & \text{---} \rule{0pt}{10pt}\\
        \text{---} & \vec N_4' & \text{---} \rule{0pt}{10pt}\\
        \text{---} & \vec N_5' & \text{---} \rule{0pt}{10pt}\\
        \text{---} & \vec N_6' & \text{---} \rule{0pt}{10pt}
    \end{pmatrix} &=&
    \begin{pmatrix} 
        1 & 0 & 1 & 0 & 0 & 0 & 0 & 0 & 0 \rule{0pt}{10pt} \\
        1 & 0 & 1 & 1 & 1 & 0 & 0 & 0 & 0 \rule{0pt}{10pt}\\
        1 & 1 & 1 & 1 & 1 & 1 & 1 & 1 & 1 \rule{0pt}{10pt}\\
        0 & 1 & 0 & 0 & 0 & 0 & 0 & 0 & 0 \rule{0pt}{10pt}\\
        0 & 1 & 0 & 0 & 0 & 1 & 0 & 0 & 0 \rule{0pt}{10pt}\\
        0 & 1 & 0 & 0 & 0 & 1 & 1 & 0 & 0 \rule{0pt}{10pt}\\
        1 & 1 & 0 & 1 & 0 & 1 & 1 & 0 & 0 \rule{0pt}{10pt}\\
        1 & 1 & 0 & 1 & 1 & 1 & 1 & 1 & 0 \rule{0pt}{10pt}\\
        1 & 1 & 1 & 1 & 1 & 1 & 1 & 1 & 1 \rule{0pt}{10pt}
    \end{pmatrix}  
\end{eqnarray*}
\caption{The result of the procedure described in the proof of Corollary \ref{coroll:permuto} for a vector $\vec d$ with $p=3$ pairs of 1 and -1, and $q=3$ zeros.}
\label{fig:boolean}
\end{figure}

Among the fundamental hyperplane arrangements of the operahedra, the one associated to the permutahedron plays a special role.  

\begin{theorem}[Fundamental hyperplane arrangement of the permutahedron] 
\label{thm:permutohyperplane} 
Let $n\geq 1$, and let us write \[ D(n)\coloneqq \{(I,J) \ | \ I,J\subset\{1,\ldots,n\}, |I|=|J|, I\cap J=\emptyset, \min(I\cup J)\in I \}. \] 
The fundamental hyperplane arrangement of the $(n-1)$-dimensional permutahedron in $\RR^n$ is the set of hyperplanes defined by  
\[ \sum_{i\in I}x_i = \sum_{j \in J}x_j \quad \text{ for all } (I,J) \in D(n) \ . \]
\end{theorem}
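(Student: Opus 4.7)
The plan is to deduce this theorem as a direct translation of \cref{coroll:permuto}, turning the language of trinary vectors into the language of ordered pairs of subsets $(I,J)$.

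First, I would observe that the $(n-1)$-dimensional standard permutahedron in $\RR^n$ appears as the standard-weight Loday realization $P_t$ for $t$ a 2-leveled tree with $n+1$ internal vertices (using the bijection with $\Sy_n$ recalled after \cref{def:order2}). Applying \cref{coroll:permuto} to this $t \in \PT{n+1}$ then provides a bijection between equivalence classes (up to scalar) of directions of edges of $P \cap \rho_z P$, for $z \in P$, and trinary vectors $\vec d \in \RR^n$ having the same number of $+1$'s and $-1$'s and whose first non-zero coordinate is $+1$.

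Second, I would set up the bijection between such trinary vectors and the pairs $(I,J) \in D(n)$ of the statement by sending
\[
\vec d \longmapsto \bigl(I, J\bigr), \quad I \coloneqq \{i : d_i = 1\}, \quad J \coloneqq \{j : d_j = -1\}.
\]
The conditions $I \cap J = \emptyset$ and $|I| = |J|$ follow from $\vec d$ being trinary with an equal count of $\pm 1$, and the requirement that the first non-zero coordinate of $\vec d$ be $+1$ translates exactly into $\min(I \cup J) \in I$. The inverse map, sending $(I,J)$ to the characteristic trinary vector $\vec d$ with $d_i = 1$ for $i \in I$ and $d_j = -1$ for $j \in J$, is immediate.

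Third, recalling from \cref{def:fundamentalhyperplane} that $\mathcal{H}_P$ consists of the hyperplanes orthogonal to these directions, I would compute
\[
\{x \in \RR^n : \langle x, \vec d \rangle = 0\} = \Bigl\{x \in \RR^n : \sum_{i \in I} x_i = \sum_{j \in J} x_j \Bigr\},
\]
which matches the equations of the statement. Combining the two bijections yields the desired description of $\mathcal{H}_P$. There is no real obstacle here: the whole argument is a rewriting, and the only point that requires care is checking that the normalization ``first non-zero coordinate equal to $+1$'' gives precisely one representative per pair $\{\vec d, -\vec d\}$, which is exactly what the condition $\min(I \cup J) \in I$ enforces on the index-pair side.
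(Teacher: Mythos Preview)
Your proposal is correct and is exactly the paper's approach: the paper's proof consists of the single sentence ``This follows immediately from \cref{coroll:permuto}'', and you have simply spelled out the dictionary between trinary direction vectors and pairs $(I,J)\in D(n)$ that makes this immediate.
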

\begin{proof} This follows immediately from \cref{coroll:permuto}. 
\end{proof}
    
This hyperplane arrangement is a refinement of the braid arrangement, see \cref{fig:braidrefinement}. Computations show that it is in general not a simplicial arrangement. 

\begin{figure}[ht!]
    \centering
    \includegraphics[width=0.4\linewidth]{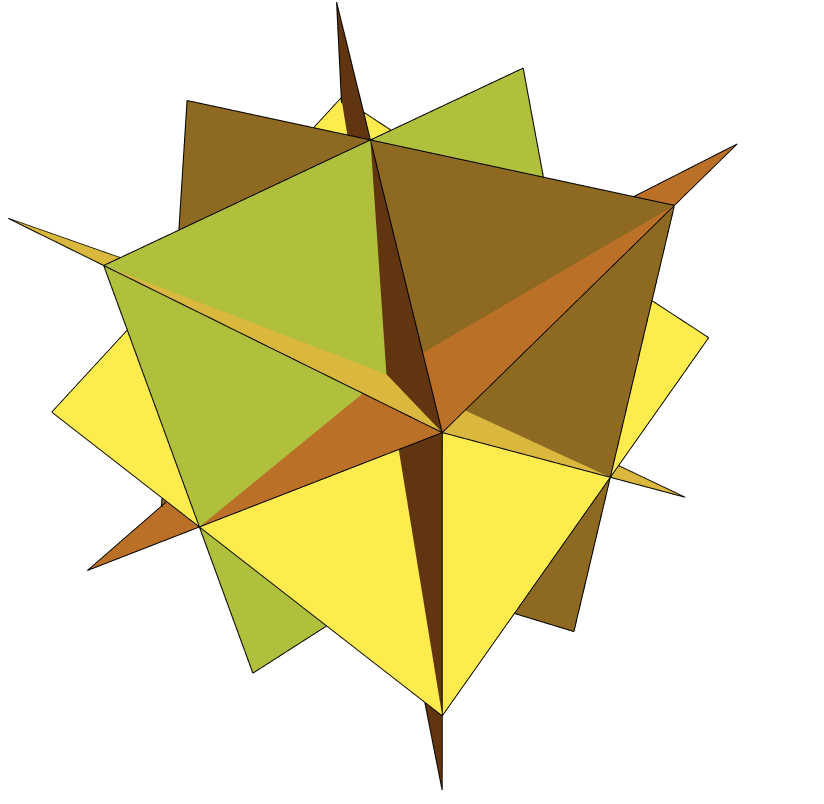}
    \includegraphics[width=0.4\linewidth]{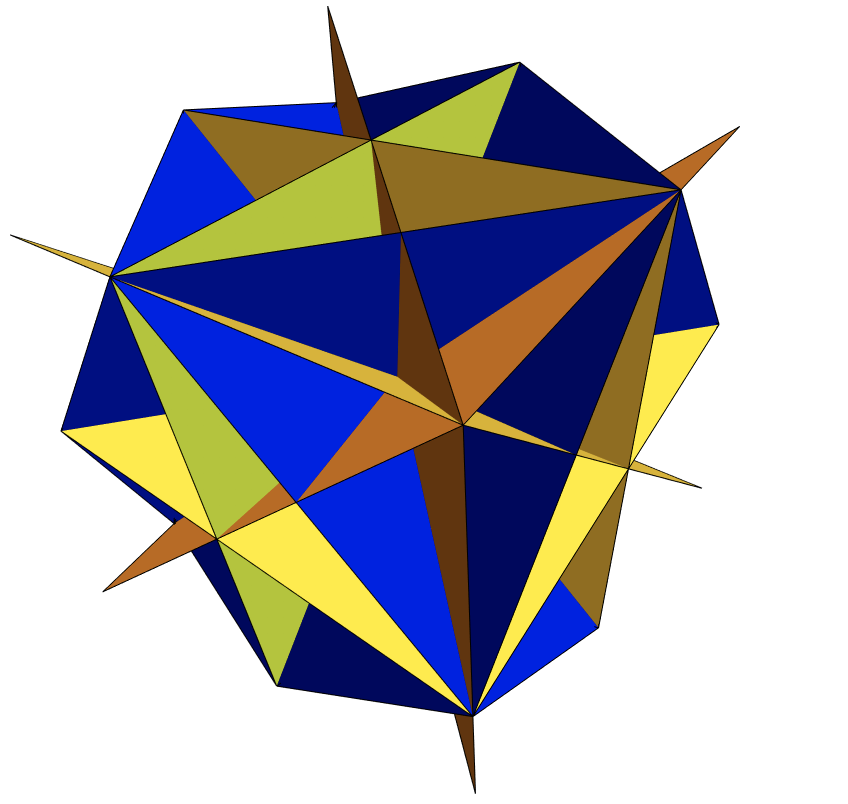}
\caption{The braid arrangement and fundamental hyperplane arrangement of the permutohedron in $\RR^4$, projected into $\RR^3$.}
\label{fig:braidrefinement}
\end{figure}

\begin{remark} 
    \label{rem:zonotope} 
    The fundamental hyperplane arrangement appears as the normal fan of a zonotope, which is itself a facet of the zonotope denoted $H_\infty(d,1)$ in \cite{DezaPourninRakotonarivo21}, the facet contained in the hyperplane $x_1+\cdots + x_d=0$. This last zonotope is related to matroid optimization \cite{DezaManoussakisOnn18} and generalizes L. Billera's \emph{White Whale} \cite{BilleraWhale}, which has been the subject of active research in the recent years. 
\end{remark}

For a tree $t$, we denote by $\mathcal{H}_t$ the fundamental hyperplane arrangement of $P_t$. 

\begin{proposition} \label{prop:inclusionhyperplanes} Let $t, t'\in \PT{n}$ such that $t'$ is a 2-leveled tree. We have $\mathcal{H}_t\subset\mathcal{H}_{t'}$, and if $P_{t'}$ is positively oriented by $\vec v$, then so is $P_t$. 
\end{proposition}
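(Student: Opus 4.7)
The plan is to apply the general framework of \cref{section:cellularappoximation} to the specific pair $(P_{t'}, P_t)$. The whole statement will follow once we identify that the normal fan of $P_{t'}$ refines that of $P_t$.

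First I would observe that, since $t'$ is a 2-leveled tree, $P_{t'}$ is (a translate of) the standard $(n-1)$-dimensional permutahedron: this is already recorded in the discussion following \cref{prop:PropertiesKLoday}, where restricting the weighted Loday construction to 2-leveled trees produces the classical permutahedron. In particular, the normal fan of $P_{t'}$ is the braid fan. Next I would invoke \cref{coroll:geometricproperties}(2), which states that every operahedron $P_t$ is a generalized permutahedron in the sense of Postnikov, hence is obtained from the classical permutahedron by parallel translation of its facets. The standard fact about generalized permutahedra is precisely that their normal fans coarsen the braid fan; equivalently, the normal fan of $P_{t'}$ refines the normal fan of $P_t$.

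With this refinement established, the first assertion $\mathcal{H}_t \subset \mathcal{H}_{t'}$ is immediate from \cref{prop:coarseninghyperplanes}, and the second assertion (if $P_{t'}$ is positively oriented by $\vec v$, then so is $P_t$) is immediate from \cref{coroll:coarseningoriented}.

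I do not expect any real obstacle here: the proposition is essentially a direct specialization of the coarsening results of \cref{sec:applications}. The only point that merits a brief justification is the refinement of normal fans, for which one can either cite \cref{coroll:geometricproperties}(2) together with the standard characterization of generalized permutahedra, or argue directly by describing the normal cone $\mathcal{N}_{P_t}(F) = \cone(-\vec N_1, \ldots, -\vec N_{k+1}, \vec N_{k+1})$ of a face of $P_t$ (as recalled in the proof of \cref{coroll:geometricproperties}) and noting that each such cone is a union of chambers of the braid fan, because its defining characteristic vectors $\vec N_j$ belong to the boolean lattice $\{0,1\}^{n-1}$ that already cuts out the facets of the permutahedron.
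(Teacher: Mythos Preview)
Your proposal is correct and coincides with one of the two arguments the paper itself gives: the paper's proof says the statement ``is an immediate consequence of \cref{prop:charactedge,coroll:permuto}. Alternatively, it is a special case of the general results \cref{prop:coarseninghyperplanes,coroll:coarseningoriented}.'' You have written out this second, coarsening-based route in full.

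The only difference worth noting is that the paper's \emph{primary} justification is the direct one via \cref{prop:charactedge} and \cref{coroll:permuto}: \cref{prop:charactedge} shows that every edge direction of $P_t\cap\rho_z P_t$ is (up to scalar) a trinary vector with equally many $1$'s and $-1$'s, while \cref{coroll:permuto} shows that for the permutahedron \emph{all} such trinary vectors actually occur. This gives $\mathcal{H}_t\subset\mathcal{H}_{t'}$ without passing through the general refinement machinery, and the positive-orientation claim then follows immediately from the definition. Your route via generalized permutahedra and \cref{prop:coarseninghyperplanes,coroll:coarseningoriented} is equally valid and is exactly what the paper offers as an alternative.
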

\begin{proof} This is an immediate consequence of \cref{prop:charactedge,coroll:permuto}. Alternatively, it is a special case of the general results \cref{prop:coarseninghyperplanes,coroll:coarseningoriented}.
\end{proof}

\begin{theorem} \label{thm:orientation1}
Let $t\in \PT{n}$ be a tree. Any vector $\vec v=(v_1, \ldots, v_{n-1})\in \RR^{n-1}$ satisfying the equations \begin{eqnarray*}  \sum_{i\in I}v_i \neq \sum_{j \in J}v_j \end{eqnarray*} for all $(I,J) \in D(n-1)$ defines a bot-top diagonal of $P_t$.
\end{theorem}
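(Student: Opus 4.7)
The plan is to reduce the statement to the already-treated case of the permutahedron. Given any tree $t \in \PT{n}$, I would pick any 2-leveled tree $t' \in \PT{n}$; its Loday realization $P_{t'}$ is (up to the ambient embedding) the standard $(n-2)$-dimensional permutahedron sitting in $\RR^{n-1}$. The point of this reduction is that \cref{prop:inclusionhyperplanes} already provides the containment $\mathcal{H}_t \subset \mathcal{H}_{t'}$, so an orientation of $P_{t'}$ will automatically be an orientation of $P_t$.

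First I would invoke \cref{thm:permutohyperplane}, applied with $n$ replaced by $n-1$, to identify the fundamental hyperplane arrangement $\mathcal{H}_{t'}$ as the collection of hyperplanes $\sum_{i \in I} x_i = \sum_{j \in J} x_j$ indexed by $(I,J) \in D(n-1)$. The hypothesis on $\vec v$ is then precisely the statement that $\vec v$ lies on none of these hyperplanes, i.e.\ that $\vec v$ belongs to the interior of some chamber of $\mathcal{H}_{t'}$. By the definition of positively oriented polytope, this means $P_{t'}$ is positively oriented by $\vec v$.

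Next, applying \cref{prop:inclusionhyperplanes}, from $\mathcal{H}_t \subset \mathcal{H}_{t'}$ we deduce that $\vec v$ also avoids every hyperplane of $\mathcal{H}_t$, so $P_t$ is itself positively oriented by $\vec v$. Following the chain of implications of \cref{fig:implications}, $P_t$ is then in particular quasi-positively oriented by $\vec v$, and \cref{prop:quasiposifftight} guarantees that the associated coherent subdivision $\pi(\mathcal{F}^\phi)$ is tight. \Cref{def:Diag} then produces the desired bot-top diagonal $\triangle_{(P_t,\vec v)} : P_t \to P_t \times P_t$.

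There is essentially no obstacle once the pieces are assembled; the only subtlety is keeping track of the index shift (the ambient $\RR^{n-1}$ here versus the $\RR^n$ in the statement of \cref{thm:permutohyperplane}), which is what forces the use of $D(n-1)$ rather than $D(n)$. All the substantive work has been carried out in \cref{thm:permutohyperplane}, \cref{prop:inclusionhyperplanes} and \cref{prop:quasiposifftight}; the theorem is a direct consequence of their combination.
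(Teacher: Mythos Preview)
Your proof is correct and follows essentially the same route as the paper's one-line proof, which simply reads ``This follows directly from \cref{prop:inclusionhyperplanes}.'' You have just unpacked that reference by explicitly invoking \cref{thm:permutohyperplane} to identify $\mathcal{H}_{t'}$ and then tracing the implication from positively oriented to the existence of a bot-top diagonal via \cref{prop:quasiposifftight} and \cref{def:Diag}.
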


\begin{proof} This follows directly from \cref{prop:inclusionhyperplanes}.
\end{proof}

\subsection{Universal formula for the operahedra}

We restrict our attention to a certain class of orientation vectors. 

\begin{definition}[Well-oriented realization of the operahedron]
    Let $t\in\PT{n}$ be a tree. A \emph{well-oriented realization of the operahedron} is a positively oriented realization which also induces the poset of maximal nestings $(\mathcal{MN}(t),<)$ on the set of vertices.
\end{definition}

\begin{proposition}\label{prop:nestingoriented}
Let $t\in\PT{n}$ and let $\omega$ be a weight of length $n$. Any vector $\vec v\in \RR^{n-1}$ with strictly decreasing coordinates induces the poset of maximal nestings $(\mathcal{MN}(t),<)$ on the set of vertices of $P_{(t,\omega)}$.
\end{proposition}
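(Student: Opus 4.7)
The plan is to verify the orientation edge-by-edge on the $1$-skeleton of $P_{(t,\omega)}$, which, as noted in the paragraph following \cref{def:order2}, coincides with the Hasse diagram of $(\mathcal{MN}(t),<)$. It therefore suffices to show that for every covering relation $\mathcal{N} \prec \mathcal{N}'$ of \cref{def:order2},
\[
\bigl\langle M(t,\mathcal{N}',\omega) - M(t,\mathcal{N},\omega),\, \vec v \bigr\rangle \;>\; 0.
\]
Such a strict inequality will also guarantee that $\vec v$ is orthogonal to no edge of $P_{(t,\omega)}$, so that $(P_{(t,\omega)},\vec v)$ is oriented in the sense of \cref{def:orientedpolytope} and the induced partial order on its vertices is genuinely $(\mathcal{MN}(t),<)$.

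The two types of covering relations in \cref{def:order2} share the same local skeleton: three subtrees $\tau_1,\tau_2,\tau_3$ of $t$ linked by two edges, which I shall call $e_a,e_b$ in the linear case (with $\tau_2$ in the middle) and $e_L,e_R$ in the Y case (with $\tau_1$ at the branching and $\tau_2,\tau_3$ on the left and right). In either case, the swapped nests $N_{12}\in\mathcal{N}$ and $N_{23}\in\mathcal{N}'$ are the nests around $\tau_1\cup\tau_2$ and $\tau_2\cup\tau_3$ respectively. I would first check that the minimal nest of every edge other than the two local ones is the same in $\mathcal{N}$ and $\mathcal{N}'$, so that only two coordinates of $M$ can differ. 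A direct computation using \cref{def:Lodayreal} then yields
\[
M(t,\mathcal{N}',\omega) - M(t,\mathcal{N},\omega) \;=\; c\,(\mathbf{e}_i - \mathbf{e}_j),
\]
with $c>0$ and $\{i,j\}$ equal to the labels in $t$ of the two local edges; explicitly, $c=W(\tau_1)W(\tau_3)$ in the linear case and $c=W(\tau_2)W(\tau_3)$ in the Y case, where $W(\tau_k):=\sum_{v\in V(\tau_k)}\omega_v>0$.

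To conclude, I would then observe that the clockwise-DFS labeling convention of \cref{subsec:whatis} forces $i_a<i_b$ in the linear case (the root-side edge $e_a$ of $\tau_2$ is reached first by the traversal) and $i_L<i_R$ in the Y case (the planar-left branch is visited before the right one); hence $i<j$ in both cases. Combined with the assumption that $\vec v$ has strictly decreasing coordinates, this yields $v_i>v_j$ and therefore $c\,(v_i-v_j)>0$, as required. The main technical obstacle should be the preliminary claim that only two coordinates of $M$ differ: one must verify that the smallest nest of $\mathcal{N}$ properly containing $N_{12}$ is exactly the nest around $\tau_1\cup\tau_2\cup\tau_3$, which will follow from the compatibility conditions of \cref{def:nesting} — any candidate strictly between these nests would introduce an incompatibility with either $N_{12}\in\mathcal{N}$ or $N_{23}\in\mathcal{N}'$.
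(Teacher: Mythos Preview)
Your approach is essentially the same as the paper's: reduce to covering relations, show that the two corresponding vertices differ in exactly two coordinates by $\pm x$ with $x>0$, check via the labeling convention that the positive coordinate has the smaller index, and conclude from $v_i>v_j$. The paper treats the two covering types uniformly (it identifies the unique nests $N\in\mathcal{N}\setminus\mathcal{N}'$ and $N'\in\mathcal{N}'\setminus\mathcal{N}$, shows $\alpha_j\beta_j<\alpha_j'\beta_j'$ directly, and uses the hyperplane equation of \cref{prop:PropertiesKLoday}(1) to force the other changed coordinate to be $-x$), whereas you split into the linear and Y cases and compute $c$ explicitly; both routes are fine. One small slip: in the Y case the swapped nest in $\mathcal{N}'$ is around $\tau_1\cup\tau_3$, not $\tau_2\cup\tau_3$ (the latter is not even a nest since $\tau_2$ and $\tau_3$ are not adjacent); your value $c=W(\tau_2)W(\tau_3)$ is nonetheless correct.
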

\begin{proof} 
Let $\mathcal{N}$ and $\mathcal{N}'$ be two maximal nestings of $t$ corresponding to a covering relation $\mathcal{N} \prec \mathcal{N}'$. They differ only by a nest. We show that the corresponding edge in $P_{(t,\omega)}$ is of the form
\[\overrightarrow{M(t,\mathcal{N},\omega)M(t,\mathcal{N}',\omega)}=(0, \ldots, 0, x , 0, \ldots, 0, -x, 0, \ldots, 0) \ \ \ \ (*) \]
for some $x>0$. We denote by $N$ the unique nest of $\mathcal{N}\setminus\mathcal{N}'$ and by $N'$ the unique nest of $\mathcal{N}'\setminus\mathcal{N}$. Let $j$ and $j'$ be the two edges of $t$ such that $\min\mathcal{N}(j)=N \ \ \text{and} \ \ \min\mathcal{N}'(j')=N'$, see \cref{def:notationgeq}. Now, by the definition of the order on the edges of $t$ we have $j<j'$, see \cref{fig:treeandnesting} and \cref{def:order2}.
We denote by $\alpha_i'\beta_i'$ the $i$th coordinate of the point $M(t,\mathcal{N}',\omega$). The fact that $\min\mathcal{N}(i)=\min\mathcal{N}'(i)$ for all $i\neq j,j'$ implies that $\alpha_i\beta_i=\alpha_i'\beta_i'$ for all $i\neq j,j'$. We show that $\alpha_j\beta_j < \alpha_{j}'\beta_{j}'$. Since the nestings $\mathcal{N}$ and $\mathcal{N}'$ are maximal, we have $\min\mathcal{N}(j')=\min\mathcal{N}'(j)$ and \[ \alpha_j\beta_j = \left( \sum_{k \in V(t_1)}\omega_k\right)\left( \sum_{\ell \in V(t_2)}\omega_\ell\right) < \left( \sum_{k \in V(t_1)}\omega_k\right)\left( \sum_{\ell \in V(t_2)}\omega_\ell + \sum_{\ell \in V(t_3)}\omega_\ell\right) = \alpha_{j}'\beta_{j}' \ , \] where $t_1,t_2$ and $t_3$ are trees with possibly only one vertex, see Definition \ref{def:order2}. Moreover, since $\sum_{i \in E(t)} \alpha_i\beta_i = \sum_{i \in E(t)} \alpha_i'\beta_i'$ is constant, we must have $\alpha_j\beta_j + \alpha_{j'}\beta_{j'} = \alpha_j'\beta_j' + \alpha_{j'}'\beta_{j'}'$ . Defining $x:=\alpha_j\beta_j-\alpha_j'\beta_j'$ we obtain $\alpha_{j'}'\beta_{j'}'-\alpha_{j'}\beta_{j'}=-x$, which proves $(*)$. So $\Big\langle\overrightarrow{M(t,\mathcal{N},\omega)M(t,\mathcal{N}',\omega)}, \vec v\Big\rangle= x(v_j-v_{j'})>0$, and $\vec v$ induces the poset of maximal nestings on the set of vertices. 
\end{proof}

\begin{remark} This poset and an orientation vector inducing it were studied in more depth in \cite[Section 6]{PilaudSignedTree13}. In particular, it is shown that as soon as $t$ is not a linear tree, the poset $(\mathcal{MN}(t),<)$ is never a quotient of the weak order, see \cite[Proposition 86]{PilaudSignedTree13}. 
\end{remark}

Combining the results of \cref{thm:orientation1,prop:nestingoriented}, we obtain the following one. 

\begin{corollary} \label{prop:OrientationVector} Let $t\in\PT{n}$ be a tree and let $\omega$ be a weight of length $n$. Any vector $\vec v=(v_1, \ldots, v_{n-1})\in \RR^{n-1}$ satisfying $v_1>v_2>\cdots >v_{n-1} \quad \text{and} \quad \sum_{i\in I}v_i \neq \sum_{j \in J}v_j$, for all sets of indices $(I,J) \in D(n-1)$, induces a well-oriented realization of the operahedron $(P_{(t,\omega)}, \vec v)$.
\end{corollary}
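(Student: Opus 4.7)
The plan is to combine the two preceding results \cref{thm:orientation1} and \cref{prop:nestingoriented}, which handle respectively the two conditions defining a well-oriented realization: that $(P_{(t,\omega)},\vec v)$ be positively oriented, and that $\vec v$ induce the poset of maximal nestings $(\mathcal{MN}(t),<)$ on the vertices. The two halves of the hypothesis on $\vec v$ match one-to-one with these two requirements.

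First I would verify positive orientation. The statement of \cref{thm:orientation1} is phrased for the standard weight realization $P_t$, so the only point worth noting is how to transfer it to an arbitrary weight $\omega$. The key observation is that the fundamental hyperplane arrangement $\mathcal{H}_P$ depends only on the normal fan of $P$: by \cref{def:fundamentalhyperplane} it is determined by the directions of the edges of $P\cap \rho_z P$ for $z\in P$, and by \cref{prop:normalintersection} these directions are governed by the cones $\cone(-\mathcal{N}_P(F)\cup\mathcal{N}_P(G))$, which depend only on the normal fan. Now \cref{coroll:geometricproperties}~(2) asserts that all weighted Loday realizations $P_{(t,\omega)}$ are obtained from one another by parallel translation of facets, so they share a common normal fan; hence $\mathcal{H}_{P_{(t,\omega)}}=\mathcal{H}_{P_t}$, and the argument of \cref{thm:orientation1} applies verbatim to $P_{(t,\omega)}$.

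Second, \cref{prop:nestingoriented} applies directly: the hypothesis $v_1>v_2>\cdots>v_{n-1}$ is exactly what is needed to ensure that $\vec v$ induces the poset $(\mathcal{MN}(t),<)$ on the vertices of $P_{(t,\omega)}$. Combining positive orientation with the correct induced poset on vertices yields the well-oriented realization, as required.

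The main obstacle, if one can be pointed out, is really just bookkeeping: verifying that the fundamental hyperplane arrangement is independent of the chosen weight, so that \cref{thm:orientation1}---stated only for the standard-weight realization---carries over to arbitrary weights without any modification.
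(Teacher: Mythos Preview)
Your proof is correct and follows exactly the paper's approach: combine \cref{thm:orientation1} with \cref{prop:nestingoriented}. You are actually more careful than the paper in justifying why \cref{thm:orientation1} (stated only for the standard weight) transfers to arbitrary weight via the weight-independence of the normal fan; your citation of \cref{coroll:geometricproperties}(2) is slightly imprecise---it says the realizations are obtained from the permutahedron, not from one another---but the underlying fact that the normal fan of $P_{(t,\omega)}$ is independent of $\omega$ is immediate from the facet description in \cref{prop:PropertiesKLoday}(1)--(2).
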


\begin{remark} \label{rem:weakchambers}
Following \cref{prop:chamberinvariance}, one can wonder how many distinct well-oriented realizations of a given operahedron $P_t$ exist, i.e. how many chambers in $\mathcal{H}_t$ induce a well-oriented realization of $P_t$. For a linear tree, there is only one such chamber \cite[Proposition 3]{MTTV19}. In the case of the permutahedra of dimensions 2, 3 and 4 there are respectively 1, 2 and 12 such chambers. It would be interesting to count the number of chambers in higher dimensions. 
\end{remark}

Now, we make a coherent choice of cellular approximations of the diagonal of the operahedra. By the preceding results, this amounts to a coherent choice of chambers in the fundamental hyperplane arrangement of the permutahedra. We are motivated by the perspective of endowing the family of standard weight Loday realizations of the operahedra with a topological cellular operad structure, which will be done in the next section.

\begin{definition} \label{def:principalvector}
    A \emph{principal orientation vector} is a vector $\vec v \in \mathbb {R}^{n-1}$ such that $\sum_{i\in I}v_i > \sum_{j \in J}v_j$ for all $(I,J) \in D(n-1)$.
\end{definition}

\begin{samepage}
\begin{theorem}[Universal formula for the operahedra] \label{thm:formulaoperahedra} 
    Let $t$ be a tree with $n$ vertices, let $\omega$ be a weight of length $n$, and let $P=P_{(t,\omega)}$ denote the Loday realization of the operahedron. Let $\vec v$ be a principal orientation vector. For $F$, $G$ two faces of $P$ with associated nestings $\mathcal{N}$ and $\mathcal{N}'$, we have
\begin{eqnarray*}
    (F,G)\in \Ima\triangle_{(P,\vec v)} 
    & \iff & \forall (I,J) \in D(n-1), \exists N \in \mathcal{N}, |N\cap I|>|N\cap J| \text{ or } \\
    && \exists N' \in \mathcal{N}', |N'\cap I|<|N'\cap J|  \ .
\end{eqnarray*}
\end{theorem}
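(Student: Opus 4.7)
The plan is to deduce this theorem directly from \cref{coroll:coarseninguniversal}, specialized to the coarsening from the permutahedron's normal fan to that of the operahedron. Specifically, I would take the polytope $P$ of \cref{coroll:coarseninguniversal} to be any Loday realization $P_{t'}$ of the permutahedron for $t' \in \PT{n}$ a 2-leveled tree, and $Q$ to be the given operahedron $P_{(t,\omega)}$. By \cref{prop:inclusionhyperplanes} we have the inclusion $\mathcal{H}_t \subset \mathcal{H}_{t'}$ of fundamental hyperplane arrangements, and \cref{prop:OrientationVector} ensures that $\vec v$ is a positive orientation vector for both polytopes, so the hypotheses of \cref{coroll:coarseninguniversal} are satisfied.

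The next step is to make both sides of \cref{coroll:coarseninguniversal} explicit. By \cref{thm:permutohyperplane}, the hyperplanes in $\mathcal{H}_{t'}$ are indexed by $(I,J) \in D(n-1)$, and I would choose as positive normal the vector $\vec d_{H_{(I,J)}} := \vec e_I - \vec e_J$, where $\vec e_K$ denotes the characteristic vector of $K \subset \{1,\ldots,n-1\}$. Then
\[
\langle \vec d_{H_{(I,J)}},\,\vec v \rangle = \sum_{i\in I}v_i - \sum_{j\in J}v_j > 0 \ ,
\]
which is precisely the principal orientation condition of \cref{def:principalvector}, so this choice is legitimate.

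On the polytope side, the analysis opening the section shows that the face $F$ of $P_{(t,\omega)}$ corresponding to a nesting $\mathcal{N} = \{N_1,\ldots,N_k,E(t)\}$ has outward-pointing facet normals $\{-\vec N_j : 1 \le j \le k\}$, while $\pm \vec N_{k+1} = \pm(1,\ldots,1)$ merely span the orthogonal complement of the affine hull of $P_{(t,\omega)}$. Since $|I|=|J|$, the vector $\vec d_{H_{(I,J)}}$ has zero coordinate sum, hence is orthogonal to $(1,\ldots,1)$, so (as in the proof of \cref{thm:universalformula}) these orthogonal-complement generators contribute trivially. A direct computation then gives
\[
\langle -\vec N,\,\vec d_{H_{(I,J)}} \rangle = |N\cap J| - |N\cap I| \ ,
\]
so ``$\exists\, i,\ \langle \vec F_i,\vec d_H\rangle < 0$'' translates to ``$\exists\, N\in\mathcal{N},\ |N\cap I|>|N\cap J|$'', and symmetrically the condition on $G$ becomes ``$\exists\, N'\in\mathcal{N}',\ |N'\cap I|<|N'\cap J|$''. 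Substituting these translations into the equivalence of \cref{coroll:coarseninguniversal} yields exactly the stated formula.

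I do not expect a serious obstacle, since every ingredient has already been set up in the previous sections; the argument is mostly a dictionary between the vector-language of \cref{coroll:coarseninguniversal} and the nest-language of the operahedra. The one bookkeeping subtlety worth mentioning is that because $|E(t)\cap I| = |I| = |J| = |E(t)\cap J|$, the trivial nest never satisfies either strict inequality, so one may harmlessly let the existential quantifiers range over all nests of $\mathcal{N}$ and $\mathcal{N}'$ (trivial or not), matching the clean form of the statement.
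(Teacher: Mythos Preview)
Your proposal is correct and follows essentially the same approach as the paper: apply \cref{coroll:coarseninguniversal} with the permutahedron playing the role of $P$ and the operahedron that of $Q$, using \cref{prop:inclusionhyperplanes} for the hyperplane inclusion, the explicit normal vectors $\vec d_{H_{(I,J)}}=\vec e_I-\vec e_J$ from \cref{thm:permutohyperplane}, and the description of the normal cone of a face in terms of characteristic vectors of nests. Your write-up in fact spells out the dictionary (including the observation that the trivial nest contributes nothing) more explicitly than the paper, which simply says ``the result follows directly''.
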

\end{samepage}

\begin{figure}[h!]
    \centering
    \includegraphics[width=0.32\linewidth]{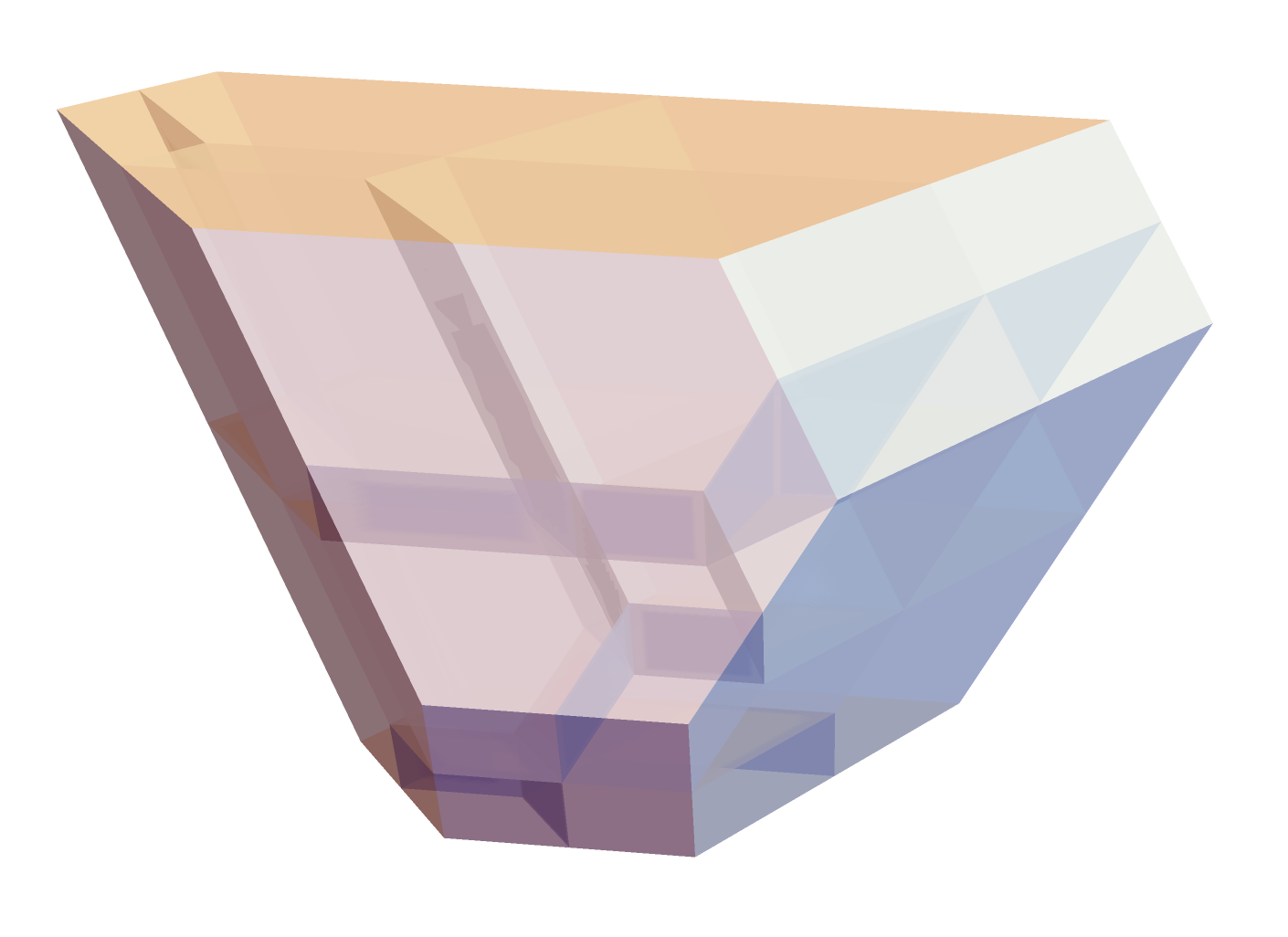} \ 
    \includegraphics[width=0.3\linewidth]{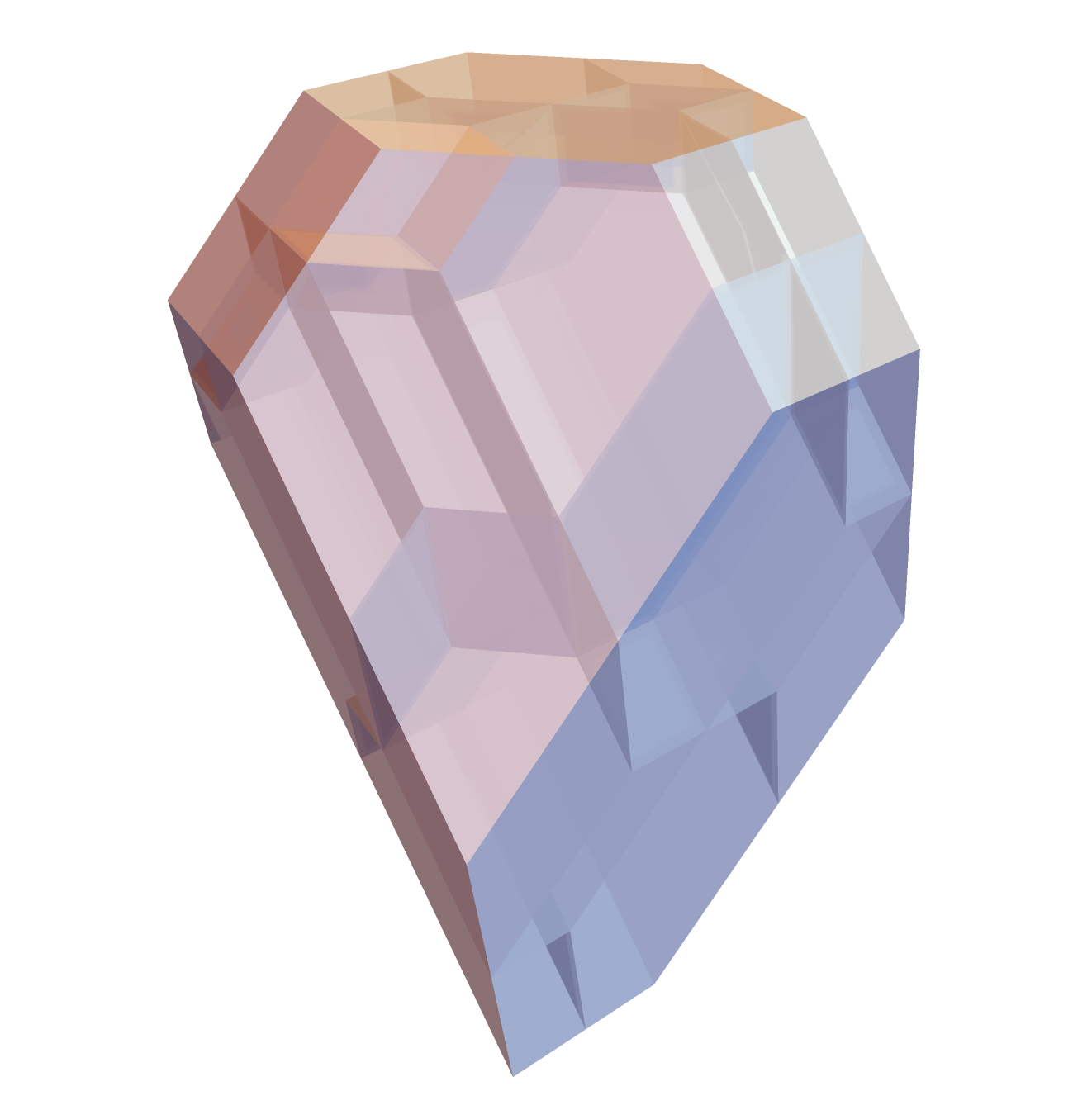} \ \ 
    \includegraphics[width=0.25\linewidth]{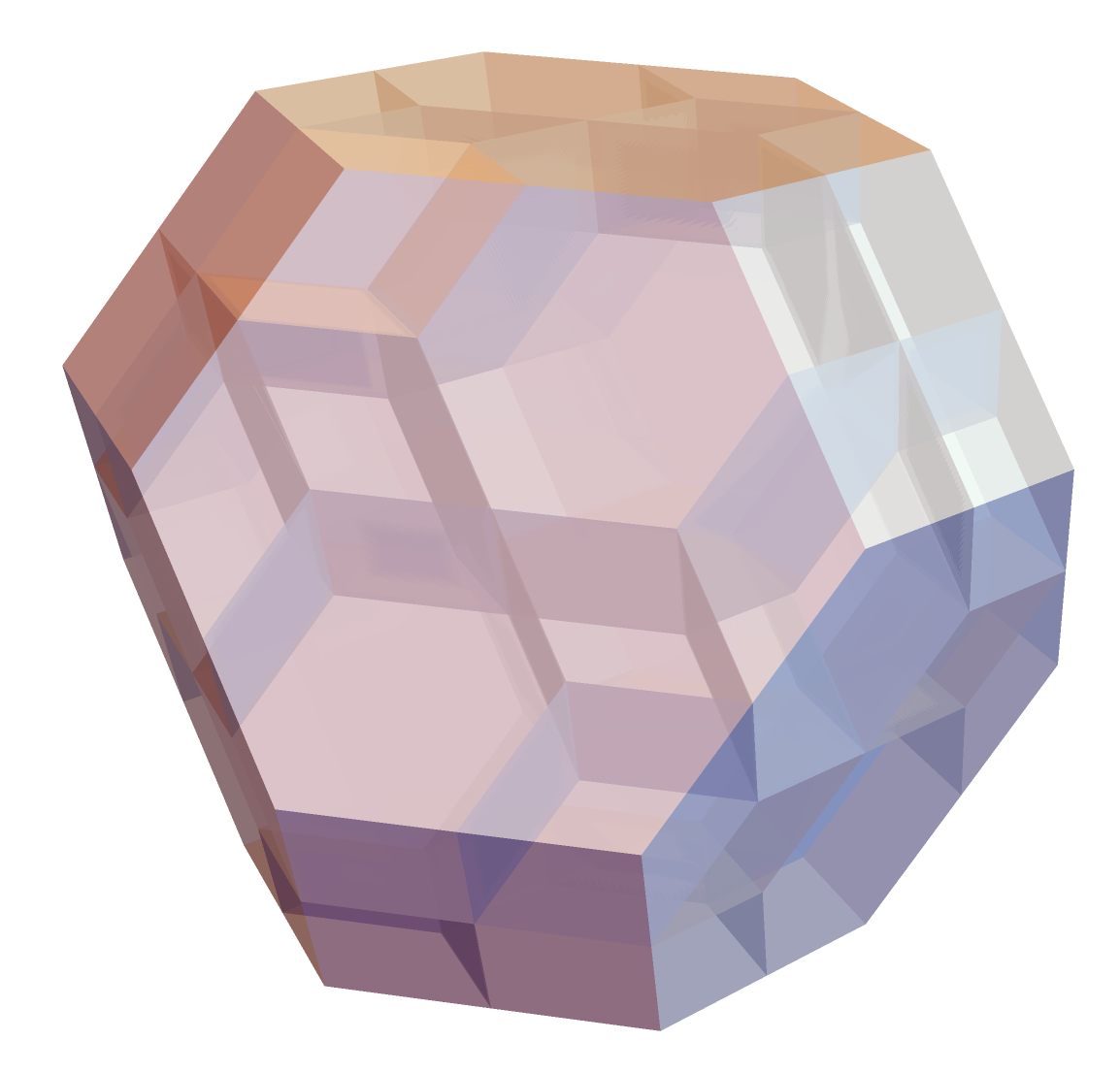}
\caption{Polytopal subdivisions of some 3-dimensional operahedra, from the associahedron (left) to the permutahedron (right), given by the universal formula.}
\label{fig:permutosub}
\end{figure}

\begin{proof} To any $(I,J)\in D(n-1)$, and thus to any hyperplane $H$ in the fundamental hyperplane arrangement of the permutohedron, we associate a normal vector $\vec d_H \in \RR^{n-1}$ by setting $(d_H)_i=1$ for $i\in I$, $(d_H)_j=-1$ for $j\in J$ and $(d_H)_k=0$ otherwise. Let us recall that the normal cone of a face $F$ of $P$, defined by a nesting $\mathcal{N}=\{N_i\}_{1\leq i \leq l+1}$, is given by $\mathcal{N}_P(F)=\cone(-\vec N_1,\ldots,-\vec N_l, -\vec N_{l+1},\vec N_{l+1})$, where $\vec N_{l+1}=(1,\ldots,1)$ is the basis of the orthogonal complement of the affine hull of $P$. Using \cref{prop:inclusionhyperplanes}, we can apply \cref{coroll:coarseninguniversal} and the result follows directly.
\end{proof}

Let us make explicit the image of the diagonal of the permutahedron in low dimensions. The bijection between maximal nestings of a 2-leveled tree and permutations pictured in \cref{fig:bijections} extends in a straightforward manner to all ordered partitions, and we use this more convenient labeling of the faces of the permutahedron to write the image of the diagonal. We also restrict ourselves to the pairs $(F,G)$ such that $\dim F + \dim G=\dim P$, since any other pair can be obtained from these by taking faces. In dimension 1, 2, and 3, we get the following formulas:
\begin{equation*}
    \begin{matrix}
        \triangle_{(P,\vec v)}(12) & = & \textcolor{blue}{1|2 \times 12} \cup \textcolor{blue}{12 \times 2|1}
    \end{matrix}
\end{equation*}

\begin{equation*}
    \renewcommand*{\arraystretch}{1.5}
\begin{matrix}
    \triangle_{(P,\vec v)}(123) 
    & = & \textcolor{blue}{1|2|3 \times 123} & \cup & \textcolor{blue}{123 \times 3|2|1} 
    & \cup & \textcolor{blue}{12|3 \times 2|13} & \cup &  13|2 \times 3|12 \\
    & \cup & \textcolor{blue}{2|13 \times 23|1} & \cup & 1|23 \times 13|2 
    & \cup & \textcolor{blue}{12|3 \times 23|1} & \cup & \textcolor{blue}{1|23 \times 3|12} \\
\end{matrix}
\end{equation*}

\begin{equation*}
    \renewcommand*{\arraystretch}{1.5}
\begin{matrix}
    \triangle_{(P,\vec v)}(1234) 
    & = & \textcolor{blue}{\textbf{1|2|3|4} \times \textbf{1234}} & \cup & \textcolor{blue}{\textbf{1234}\times \textbf{4|3|2|1}}
    & \cup & \textcolor{blue}{\textbf{12|3|4} \times \textbf{2|134}} & \cup & \textbf{134|2} \times \textbf{4|3|12} \\
    & \cup & \textcolor{blue}{12|3|4 \times 23|14} & \cup & \textbf{14|23} \times \textbf{4|3|12}
    & \cup & \textcolor{blue}{2|13|4 \times 23|14} & \cup & \textbf{14|23} \times \textbf{4|13|2} \\
    & \cup & \textbf{13|2|4} \times \textbf{3|124} & \cup & \textbf{124|3} \times \textbf{4|2|13}
    & \cup & \textcolor{blue}{\textbf{1|23|4} \times \textbf{3|124}} & \cup & 124|3 \times 4|23|1 \\
    & \cup & \textbf{1|2|34} \times \textbf{124|3} & \cup & \textcolor{blue}{\textbf{3|124} \times \textbf{34|2|1}}
    & \cup & \textbf{1|3|24} \times \textbf{134|2} & \cup & 2|134 \times 24|3|1 \\
    & \cup & \textbf{1|23|4} \times \textbf{134|2} & \cup & \textcolor{blue}{2|134 \times 4|23|1}
    & \cup & \textcolor{blue}{2|3|14 \times 234|1} & \cup & \textbf{1|234} \times \textbf{14|3|2} \\
    & \cup & \textcolor{blue}{2|13|4 \times 234|1} & \cup & \textbf{1|234} \times \textbf{4|13|2}
    & \cup & \textcolor{blue}{12|3|4 \times 234|1} & \cup & \textcolor{blue}{\textbf{1|234} \times \textbf{4|3|12}} \\
    & \cup & \textbf{1|24|3} \times \textbf{14|23} & \cup & \textcolor{blue}{23|14 \times 3|24|1}
    & \cup & \textbf{1|2|34} \times \textbf{14|23} & \cup & \textcolor{blue}{23|14 \times 34|2|1} \\
    & \cup & \textbf{1|23|4} \times \textbf{13|24} & \cup & 24|13 \times 4|23|1
    & \cup & \textbf{14|2|3} \times \textbf{4|123} & \cup & \textcolor{blue}{\textbf{123|4} \times \textbf{3|2|14}} \\
    & \cup & \textbf{1|24|3} \times \textbf{4|123} & \cup & \textcolor{blue}{123|4 \times 3|24|1}
    & \cup & \textcolor{blue}{\textbf{1|2|34} \times \textbf{4|123}} & \cup & \textcolor{blue}{\textbf{123|4} \times \textbf{34|2|1}} \\
    & \cup & \textbf{3|14|2} \times \textbf{34|12} & \cup & \textbf{12|34} \times \textbf{2|14|3}
    & \cup & \textcolor{blue}{\textbf{1|3|24} \times \textbf{34|12}} & \cup & 12|34 \times 24|3|1 \\
    & \cup & \textbf{13|4|2} \times \textbf{34|12} & \cup & \textcolor{blue}{\textbf{12|34} \times \textbf{2|4|13}}
    & \cup & \textcolor{blue}{\textbf{1|23|4} \times \textbf{34|12}}  & \cup & \textcolor{blue}{12|34 \times 4|23|1} \\
    & \cup & 2|14|3 \times 24|13 & \cup & \textbf{13|24} \times \textbf{3|14|2} 
    & \cup & 12|4|3 \times 24|13 & \cup & \textbf{13|24} \times \textbf{3|4|12} \\
    & \cup & 1|2|34 \times 24|13  & \cup & \textbf{13|24} \times \textbf{34|2|1}
\end{matrix}
\end{equation*}
The pairs in blue describe the image of the diagonal of the associahedron and the pairs in bold belong to the hemiassociahedron. The associated subdivisions of the three 3-dimensional polytopes are shown in \cref{fig:permutosub}. The number of pairs in the image of the diagonal of the permutahedra of dimensions 0 to 7 are given by the sequence 1, 2, 8, 50, 432, 4802, 65536, which coincides with the beginning of the integer sequence A007334 in \cite{OEIS}. 

\medskip

The condition $\tp(F)\leq\bm(G)$ of \cref{prop:topbot} characterizes completely all the pairs in dimension 3, except eight of them: 
$12|34\times 2|4|13$, $12|34\times 24|3|1$, $1|2|34\times 24|13$, $12|4|3\times 24|13$, $13|24\times 3|4|12$, $13|24\times 34|2|1$, $1|3|24\times 34|12$ and $13|4|2\times 34|12$. 
In fact, the condition $\tp(F)\leq\bm(G)$ is equivalent to the conditions in \cref{thm:formulaoperahedra} for the pairs $(I,J)$ of size $|I|=|J|=1$.

\begin{proposition}
\label{prop:inversionnumber}
For any pair of faces $F,G$ of an $(n-1)$-dimensional permutahedron $P\subset \RR^n$, we have that
\begin{eqnarray*}
    (*) \quad \quad \quad \tp(F)\leq\bm(G) 
    & \iff & \forall 1\leq i<j \leq n-1, \exists N \in \mathcal{N}, i \in N, j \notin N \text{ or }  \\
    && \exists N' \in \mathcal{N}', i \notin N', j \in N' \ . \nonumber
\end{eqnarray*}
\end{proposition}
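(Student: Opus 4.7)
The plan is to pass through the bijection between nestings of the $2$-leveled tree and ordered set partitions: the nesting $\mathcal{N}$ of $F$ corresponds to an ordered partition $B_1 | \cdots | B_k$ with $B_p = N_p\setminus N_{p-1}$, and likewise $\mathcal{N}'$ of $G$ corresponds to $C_1 | \cdots | C_l$. For each element $x$ let $b(x)$ denote the index of the block containing $x$ in $\mathcal{N}$, and $c(x)$ the analogous index in $\mathcal{N}'$. Under this bijection, the existence of $N\in\mathcal{N}$ with $i\in N$ and $j\notin N$ is equivalent to $b(i)<b(j)$ (take the prefix nest $N_{b(i)}$), and symmetrically on the $G$-side.

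The first step is to identify $\tp(F)$ and $\bm(G)$ explicitly as permutations. The face $F$ decomposes as a product of standard permutahedra, one per block $B_p$, and the orientation vector $\vec v$ restricts to a strictly decreasing vector on each factor. Reducing to the case of a single permutahedron --- where the coordinate formulas used in the proof of \cref{prop:nestingoriented} show that the identity is the unique bottom and the reverse permutation the unique top of the weak Bruhat order --- one obtains that $\bm(F)$ is the permutation listing each $B_p$ in increasing order (concatenated in the order $p=1,\dots,k$), while $\tp(F)$ is obtained analogously with each block in decreasing order.

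The second step is the computation of the inversion sets, using the standard convention that $(i,j)$ with $i<j$ is an inversion of a permutation when $j$ precedes $i$ in its one-line notation. For $\tp(F)$ the pair $(i,j)$ is an inversion precisely when $i$ and $j$ lie in the same block of $\mathcal{N}$ (the decreasing order forces $j$ before $i$) or when $i$'s block is strictly later than $j$'s block; these two cases merge into $b(i)\geq b(j)$. For $\bm(G)$ the increasing order inside each block produces no intrablock inversion, so $(i,j)\in\mathrm{Inv}(\bm(G))$ iff $c(i)>c(j)$.

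Finally, the weak Bruhat order is characterized by containment of inversion sets, so $\tp(F)\leq\bm(G)$ is equivalent to $\mathrm{Inv}(\tp(F))\subseteq\mathrm{Inv}(\bm(G))$. This inclusion reads: for all $i<j$, $b(i)\geq b(j)\Rightarrow c(i)>c(j)$. Its contrapositive yields: for all $i<j$, $b(i)<b(j)$ or $c(j)<c(i)$, which is precisely the right-hand side of the proposition. The only delicate point is the first step --- verifying that the orientation-defined $\bm$ and $\tp$ really coincide with the combinatorial block-increasing/decreasing description --- but this reduces to a base case on the standard permutahedron via the product decomposition of faces, which is the same reduction used in the proof of \cref{prop:PropertiesKLoday} (5).
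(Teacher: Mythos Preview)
Your proof is correct and follows essentially the same approach as the paper: both identify $\tp(F)$ and $\bm(G)$ as the permutations obtained by listing each block of the ordered partition in decreasing (resp.\ increasing) order, then use the characterization of the weak Bruhat order by containment of inversion sets to translate the condition into the block-index form $b(i)<b(j)$ or $c(j)<c(i)$. The only organizational difference is that the paper first treats the vertex case separately and then extends, whereas you compute $\mathrm{Inv}(\tp(F))$ and $\mathrm{Inv}(\bm(G))$ directly in terms of the block indices of $F$ and $G$; the content is the same.
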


\begin{proof}
We proceed in two steps.
\begin{enumerate}
\item Let us first prove the statement for $F$ and $G$ two vertices of $P$. 
They are associated to ordered partitions of $\{1,\ldots,n-1 \}$, from which one can extract the nestings by reading the connected unions of blocks containing the leftmost element. 
The condition on the right hand side of ($*$) is then exactly the condition that the set of inversions of $F$ is contained in the set of inversions of $G$, and thus that $F\leq G$ with respect to the weak Bruhat order.
\item Now let $F$ and $G$ be two faces of $P$ that are not necessarily vertices.
If they satisfy the condition on the right hand side of ($*$), then $\tp(F)$ and $\bm(G)$ certainly do, so by the preceding point we have $\tp(F) \leq \bm(G)$.
For the reverse implication, suppose that we have $\tp(F) \leq \bm(G)$.
Observe that $\tp(F)$ is obtained from the ordered partition defining $F$ by refining each block in the following way: putting the elements of the block in strictly decreasing order, and then making each of the elements into a new block. 
From this description, it is clear that if $i$ is to the left of $j$ in $\tp(F)$ and $i<j$, then the two must be in distinct blocks of $F$. 
Similarly, if $i$ is to the right of $j$ in $\bm(G)$ and $i<j$, then the two must be in distinct blocks of $G$.
Thus, $F$ and $G$ satisfy the condition on the right hand side of ($*$), which concludes the proof. 
\end{enumerate}
\end{proof}

\begin{remark} 
    \label{rem:combinatorialdescription} 
    When computing $\Ima\triangle_{(P,\vec v)}$, it appears that only a certain proportion of the pairs $(I,J)$ for $|I|=|J|\geq 2$ are necessary. 
    Is there a more "efficient" description of the diagonal? 
    In particular, is there a "purely combinatorial" description in terms of ordered partitions? 
\end{remark}

\begin{remark} 
    \label{rem:permutoSU} 
    The diagonal of the permutahedron considered here differs from that of \cite{SaneblidzeUmble04}, see Example 4 therein. 
    In dimension 3 the two diagonals correspond to the two chambers of the fundamental hyperplane arrangement respecting the weak order, see \cref{rem:weakchambers}. 
    It would be interesting to know if there is a choice of chambers in all dimensions that recovers the diagonal of \cite{SaneblidzeUmble04}. 
    According to \cite[Table 1]{Vejdemo07}, both diagonals have the same number of pairs in dimensions up to 7. 
\end{remark}

From the description of the diagonal of the permutohedron in terms of ordered partitions or nestings of a 2-leveled tree, one obtains the description of the diagonal of any operahedron by applying the coarsening projection of \cref{def:coarseningprojection}.

\begin{proposition}[Coarsening projection for the operahedra] Let $t,t'\in \PT{n}$ be such that $t'$ is a 2-leveled tree but $t$ is not. The coarsening projection $\theta : \mathcal{N}(t')\to\mathcal{N}(t)$ admits the following description. To each nest $N\subset E(t')$, we associate the minimal collection of disjoint nests $N_1,\ldots,N_r\subset E(t)$ such that $\cup_{1\leq i\leq r} N_i= N$. To obtains $\theta(\mathcal{N})$, for any nesting $\mathcal{N}$ of $t'$, we apply this procedure to every nest of $\mathcal{N}$ and then take the union of the resulting nests. 
\end{proposition}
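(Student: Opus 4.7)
The plan is to proceed in two steps: first verify that the minimal collection referred to in the statement is unambiguously defined and that the resulting family $\mathcal{N}$ of subsets of $E(t)$ is a nesting of $t$; then check that $\mathcal{N} = \theta(\mathcal{N}')$ in the sense of \cref{def:coarseningprojection}.

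For the first step, a collection of disjoint nests of $t$ whose union equals a subset $N \subseteq E(t)$ is precisely a partition of $N$ into subsets that are connected in the line graph of $t$, and the unique such partition of minimal cardinality is the partition of $N$ into its connected components. Hence the $N_i$'s are unambiguously defined. Moreover, since $t'$ is 2-leveled, every pair of edges of $t'$ shares the central vertex and is therefore adjacent, so Condition (3) of \cref{def:nesting} forbids any pair of disjoint nests of $t'$; any nesting $\mathcal{N}'$ of $t'$ is thus a chain $N_1' \subsetneq \cdots \subsetneq N_k' = E(t')$. Writing $\{C_{i,j}\}_j$ for the connected components of $N_i'$ in $t$ and setting $\mathcal{N} \coloneqq \bigcup_i \{C_{i,j}\}_j$, I would then verify the three axioms of \cref{def:nesting}: axiom (1) follows since the trivial nest $E(t) = E(t')$ arises as the unique component of $N_k'$; for axiom (2) applied to $C_{i,j}$ and $C_{i',j'}$ with $i \leq i'$, the connectedness of $C_{i,j} \subset N_{i'}'$ forces $C_{i,j} \subseteq C_{i',j''}$ for some unique $j''$, so the pair is nested or disjoint; axiom (3) is the maximality of connected components, since any edge joining two disjoint components would merge them.

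For the second step, I would use the explicit description of normal cones recalled in the proof of \cref{coroll:geometricproperties}: for a nesting $\mathcal{M}$ of $t$ determining a face $F_{\mathcal{M}}$ of $P_t$, one has $\mathcal{N}_{P_t}(F_{\mathcal{M}}) = \cone(\{-\vec M \mid M \in \mathcal{M}\} \cup \{\vec 1, -\vec 1\})$, and analogously for $P_{t'}$. The disjointness $N_i' = \bigsqcup_j C_{i,j}$ gives $\vec N_i' = \sum_j \vec C_{i,j}$, whence $\mathcal{N}_{P_{t'}}(F_{\mathcal{N}'}) \subseteq \mathcal{N}_{P_t}(F_{\mathcal{N}})$. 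To establish minimality, I would verify that a point in the relative interior of $\mathcal{N}_{P_{t'}}(F_{\mathcal{N}'})$, for instance $-\sum_i \vec N_i' = -\sum_{i,j} \vec C_{i,j}$, rewrites as a non-negative combination of the generators $-\vec C$, $C \in \mathcal{N}$, in which every coefficient is strictly positive (each $C \in \mathcal{N}$ appears at least once, namely as a component of the smallest $N_i'$ containing it). This places the relative interior of $\mathcal{N}_{P_{t'}}(F_{\mathcal{N}'})$ inside the relative interior of $\mathcal{N}_{P_t}(F_{\mathcal{N}})$, identifying $\mathcal{N}_{P_t}(F_{\mathcal{N}})$ as the minimal cone of $\mathcal{N}_{P_t}$ containing $\mathcal{N}_{P_{t'}}(F_{\mathcal{N}'})$, whence $\mathcal{N} = \theta(\mathcal{N}')$.

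The main subtlety is that the same connected component $C$ of $t$ may arise from several distinct $N_i'$'s, while the formula takes the set-theoretic union and therefore records $C$ only once in $\mathcal{N}$. The relative-interior argument above is robust to this repetition, because strict positivity of the coefficients, rather than their precise values, is what matters for the inclusion of relative interiors.
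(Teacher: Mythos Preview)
Your proposal is correct and is essentially what the paper has in mind, though the paper's own proof is a single sentence (``This follows from a direct translation of \cref{def:coarseningprojection} in terms of nestings'') whereas you carry out that translation in full. Your two-step structure---well-definedness of the nesting, then identification of normal cones---and the relative-interior argument for minimality are exactly the details one would supply to flesh out the paper's claim; nothing in your argument deviates from or adds to the underlying strategy.
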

\begin{proof} This follows from a direct translation of \cref{def:coarseningprojection} in terms of nestings. 
\end{proof}

\cref{prop:coarseningcommutes} ensures that the coarsening projection for the operahedra is surjective and commutes with the diagonal maps. We have $|\mathcal{N}|\geq |\theta(\mathcal{N})|$, so the dimension of a face stays the same or diminishes. To obtain the image of the diagonal of $P_t$, one has to apply $\theta$ to the pairs $(F,G)$ with $\dim F + \dim G = \dim P$ and keep only those for which $\dim \theta(F)=\dim F$ and $\dim\theta(G)=\dim G$. In the case where $t$ is a linear tree, one recovers A. Tonks' projection \cite{Tonks97}.

\begin{remark}
    Restricting to linear trees, we recover with a different combinatorial description the "magical formula" of \cite{MarklShnider06, MTTV19} and conjecturally \cite{SaneblidzeUmble04} for the associahedra.
\end{remark}

\section{Tensor product of homotopy operads} \label{section:operadicstructure}

In this section, we show that there exists a topological cellular colored operad structure on the Loday realizations of the operahedra compatible with (in fact, forced by) the above choices of diagonals. Applying the cellular chains functor, we obtain a "Hopf" operad in chain complexes (this operad is not quite a Hopf operad since the diagonal is not strictly coassociative) describing non-symmetric operads up to homotopy, that is non-symmetric operads where the parallel and sequential axioms are relaxed up to a coherent tower of homotopies. The formula for the image of the diagonal obtained in \cref{section:diagonal} allows us to define explicitly, for the first time, the tensor product of two non-symmetric operads up to homotopy.

\subsection{The colored operad encoding the space of non-symmetric operads} 
We work over a field $\mathbb{K}$ of characteristic $0$. 
Note that since the operads that we consider here come from set-theoretic ones, we could as well work over $\mathbb{Z}$.

\begin{definition}[Tree substitution] For any trees $t' \in \PT{k}$ and $t'' \in \PT{l}$, for any vertex $i \in V(t')$ having the same number of inputs as $t''$, we define the tree $t' \circ_i t'' \in \PT{k+l-1}$ obtained by replacing the induced subtree of the vertex $i$ in $t'$ by the tree $t''$. More precisely, the tree $t' \circ_i t''$ has vertices $(V(t')\setminus \{i\})\sqcup V(t'')$ and edges $E(t')\sqcup E(t'')$, see \cref{fig:subnested}.
\end{definition}

\begin{definition}[The colored operad $\mathcal{O}$] We denote by $\mathcal{O}$ the $\NN$-colored operad whose $\NN$-colored collection is defined by
$$\mathcal{O}({n_1,\ldots,n_k}; n)=
    \mathbb{K}\left\{
    \begin{array}{cl}
    \text{Planar tree } t \in \PT{k} \text{ with a bijection } \sigma : \{1,\ldots, k\}\to V(t) \\
    \text{such that } \sigma (i) \text{ has } n_i \text{ inputs for all } i   
    \end{array} 
    \right\} $$
if $n_1+\cdots+n_k-k+1=n$, and the trivial vector space otherwise. Let $t' \in \PT{k}$ and $t'' \in \PT{l}$ be trees with bijections $\sigma'$ and $\sigma''$, respectively. For any $i \in \{1,\ldots,k\}$ such that $\sigma'(i)\in V(t')$ has the same number of inputs as $t''$, we define a partial composition map via tree substitution \[ (t',\sigma')\circ_i (t'',\sigma'') \coloneqq (t' \circ_{\sigma'(i)} t'',\sigma'\circ_i \sigma'') \ , \] where the permutation $\sigma'\circ_i \sigma''$ is defined by 
\[\sigma'\circ_i\sigma''(j)\coloneqq \left\{
    \begin{array}{cll}
    \sigma'(j) & \text{if}\  j<i \\
    \sigma''(j-i+1) & \text{if}\ i\leq j \leq i+l-1 \\
    \sigma'(j-l+1)  & \text{if}\ i+l\leq j \leq k+l-1 \ .
    \end{array} 
    \right. \]
The symmetric groups action is given by precomposition and the corollas $\PT{1}=\{ \mathcal{O}(n;n) \ | \ n \in \NN\}$ give the unit elements.  
\end{definition}

The basis elements of $\mathcal{O}$ are called \emph{operadic trees}. We represent an operadic tree $(t, \sigma)$ by labeling every vertex $j \in V(t)$ with the number $\sigma^{-1}(j)$. If this labeling coincides with the canonical labeling defined in \cref{subsec:whatis}, we say that $t$ is a \emph{left-recursive} operadic tree. 

When we restrict $\mathcal{O}$ to the linear trees where each vertex has only one input (that is, we restrict the set of colors from $\NN$ to $1$), we obtain the symmetric operad $\mathrm{Ass}$ encoding associative algebras. 

When we restrict $\mathcal{O}$ to the two-leveled trees where each vertex of the second level has only one input, and identify all the trees with the same number of vertices and vertices labels, we obtain the permutad $\mathrm{permAs}^{sh}$ encoding associative permutadic algebras \cite[Section 7.6]{LodayRonco11}. Here, substitution is restricted to the vertex of the first level only, in such a way that we stay in 2-leveled trees.  

\begin{proposition}
    Algebras over the colored operad $\mathcal{O}$ are non-unital non-symmetric operads.
\end{proposition}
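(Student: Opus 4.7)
The plan is to exhibit the classical identification of $\mathcal{O}$ with the colored operad governing non-symmetric operads. Given an algebra $A = \{A(n)\}_{n \geq 1}$ over $\mathcal{O}$, I would first read off partial composition maps from the 2-vertex operadic trees. For each triple $(m,n,i)$ with $m,n \geq 1$ and $1 \leq i \leq m$, there is a unique left-recursive 2-vertex operadic tree $T_{m,n,i} \in \mathcal{O}(m,n;\, m+n-1)$ whose root has arity $m$ (labeled $1$) and whose upper vertex has arity $n$ (labeled $2$), attached at the $i$-th input of the root. The structure map $\mu_{T_{m,n,i}} \colon A(m) \otimes A(n) \to A(m+n-1)$ then provides the candidate partial composition $\circ_i$.

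Next I would verify the sequential and parallel axioms of non-symmetric operads by examining 3-vertex operadic trees. For the \emph{sequential axiom} $(f \circ_i g) \circ_{i+j-1} h = f \circ_i (g \circ_j h)$, both sides correspond to two ways of decomposing the unique 3-vertex left-recursive operadic tree obtained by substituting $T_{n,p,j}$ into the upper vertex of $T_{m,n,i}$. For the \emph{parallel axiom} $(f \circ_i g) \circ_{k+n-1} h = (f \circ_k h) \circ_i g$ with $i<k$, the two sides correspond to two orders of substitution producing the same 2-leveled 3-vertex operadic tree, with the symmetric group action on the two upper vertices intertwining the two labelings. In each case the associativity and equivariance of composition in the colored operad $\mathcal{O}$ force the equality of the associated structure maps. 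Conversely, any operadic tree with $k$ vertices decomposes into $k-1$ iterated substitutions of 2-vertex trees, so the collection $\{\circ_i\}$ determines all of the structure maps $\mu_{(t,\sigma)}$ of an $\mathcal{O}$-algebra, and one recovers an $\mathcal{O}$-algebra structure from a non-symmetric operad structure by assembling $\mu_{(t,\sigma)}$ from any chosen decomposition.

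The main obstacle is to show that no further relations beyond the sequential and parallel axioms arise, i.e. that $\mathcal{O}$ is freely generated by its 2-vertex operadic trees modulo exactly these two relations. This is a tree-combinatorial statement which reduces, by induction on the number of vertices, to showing that any two iterated decompositions of a given operadic tree into 2-vertex pieces are connected by a finite sequence of elementary moves, each of which is an instance of the sequential or parallel axiom applied to a pair of adjacent 2-vertex substitutions. Once this confluence property is established, the two constructions above become mutually inverse, yielding a bijective correspondence between $\mathcal{O}$-algebra structures on $\{A(n)\}_{n \geq 1}$ and non-unital non-symmetric operad structures, which proves the proposition.
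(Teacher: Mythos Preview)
Your proposal is correct and in fact contains substantially more than the paper's own proof, which consists entirely of a pointer to the literature (Van der Laan, Dehling--Vallette, Obradovi\'c). The route you sketch is precisely the standard one found in those references: the 2-vertex operadic trees generate $\mathcal{O}$ and yield the partial compositions $\circ_i$; the two shapes of 3-vertex trees (linear and 2-leveled, the latter together with the $\mathbb{S}_2$-action swapping the upper labels) give exactly the sequential and parallel axioms; and the fact that these exhaust the relations is the binary quadratic presentation of $\mathcal{O}$. Your confluence argument for this last point---that any two iterated decompositions of an operadic tree into 2-vertex pieces differ by a chain of sequential/parallel moves---is the combinatorial heart of that presentation, and is indeed what the cited sources establish (the paper itself relies on this presentation again later, in the proof of \cref{prop:Oinfiniminimal}). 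So there is no genuine divergence of approach: you have simply written out what the paper leaves to citation.
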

\begin{proof} We refer to \cite[Section 4]{VanDerLaan03}, \cite[Section 1]{DehlingVallette15} or \cite[Section 2]{Obradovic19} for details. 
\end{proof}

The operation of tree substitution naturally extends to nested trees. For $n\geq 2$, let us denote by $\mathrm{N}\PT{n}$ the set of nested trees with $n$ vertices. By convention, we define $\mathrm{N}\PT{1}:=\PT{1}$. 

\begin{definition}[Nested tree substitution] For any nested trees $(t',\mathcal{N}') \in \mathrm{N}\PT{k}$ and $(t'',\mathcal{N}'') \in \mathrm{N}\PT{l}$, for any $i \in V(t')$ having the same number of inputs as $t''$, we define the nested tree \[ (t',\mathcal{N}') \circ_i (t'',\mathcal{N}'')\coloneqq (t' \circ_i t'', \mathcal{N}'\circ_i \mathcal{N}'') \in \mathrm{N}\PT{k+l-1}\ , \] where $\mathcal{N}'\circ_i \mathcal{N}''=\{(N'\setminus \{i\})\sqcup V(t'') \ | \ N' \in \mathcal{N}'\}\sqcup \mathcal{N}''$. 
\end{definition}

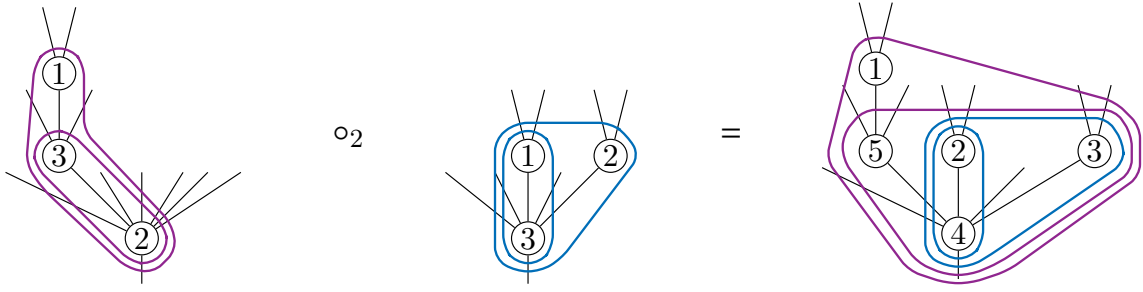
\begin{figure}[h!]
\centering
\resizebox{0.95\linewidth}{!}{
\begin{tikzpicture}
\node (E)[circle,draw=black,minimum size=4mm,inner sep=0.1mm] at (0,0) {\small $2$};
\node (F) [circle,draw=black,minimum size=4mm,inner sep=0.1mm] at (-1,1) {\small $3$};
\node (A) [circle,draw=black,minimum size=4mm,inner sep=0.1mm] at (-1,2) {\small $1$};
\draw[-] (0.8,0.8) -- (E)--(-1.65,0.8); 
\draw[-] (0.5,0.8) -- (E)--(-0.5,0.8); 
\draw[-] (-1.2,2.8) -- (A)--(-0.8,2.8); 
\draw[-] (E)--(0,-0.55); 
\draw[-] (-1.4,1.8) -- (F)--(-0.6,1.8);   
\draw[-] (E)--(F) node {};
\draw[-] (E)--(0,0.8) node  {};
\draw[-] (E)--(1.2,0.8) node {};
\draw[-] (F)--(A) node {};
\draw [Plum,rounded corners,thick] (0.11,-0.32) -- (-0.14,-0.28) -- (-1.28,0.86) -- (-1.32,1.1) --  (-1.1,1.32) -- (-0.86,1.28) -- (0.28,0.14) -- (0.32,-0.11) -- cycle;
\draw [Plum,rounded corners,thick] (0.14,-0.42) -- (-0.18,-0.36) -- (-1.2,0.6) -- (-1.4,0.9) -- (-1.3,2.1) -- (-1.15,2.3) -- (-0.85,2.3) -- (-0.7,2.1) --  (-0.7,1.3) -- (0.36,0.18) -- (0.42,-0.14) -- cycle;
\end{tikzpicture}   \quad \quad \raisebox{4.1em}{$\circ_2$} \quad \quad  
\begin{tikzpicture}
\node (E)[circle,draw=black,minimum size=4mm,inner sep=0.1mm] at (0,0) {\small $3$};
\node (F) [circle,draw=black,minimum size=4mm,inner sep=0.1mm] at (0,1) {\small $1$};
\node (A) [circle,draw=black,minimum size=4mm,inner sep=0.1mm] at (1,1) {\small $2$};
\draw[-] (-0.2,1.8) -- (F)--(0.2,1.8);   
\draw[-] (0.8,1.8) -- (A)--(1.2,1.8);   
\draw[-] (-0.4,0.8)--(E)--(0,-0.55); 
\draw[-] (0.4,0.8)--(E)--(-1,0.8); 
\draw[-] (E)--(F) node {};
\draw[-] (E)--(A) node  {};
\draw [NavyBlue,rounded corners,thick] (-0.15,-0.3) -- (-0.3,-0.1) -- (-0.3,1.1) -- (-0.15,1.3) -- (0.15,1.3) -- (0.3,1.1) -- (0.3,-.1) -- (0.15,-.3) -- cycle;
\draw [NavyBlue,rounded corners,thick] (-0.15,-0.4) -- (-0.4,-0.25) -- (-0.4,1.2) -- (-0.15,1.4) -- (1,1.4) -- (1.2,1.25) -- (1.35,1) -- (0.4,-0.25) -- (0.15,-0.4) -- cycle;
\end{tikzpicture}  \quad \quad \raisebox{4.1em}{$=$} \quad \quad 
\begin{tikzpicture}
\node (E)[circle,draw=black,minimum size=4mm,inner sep=0.1mm] at (0,0) {\small $4$};
\node (F) [circle,draw=black,minimum size=4mm,inner sep=0.1mm] at (-1,1) {\small $5$};
\node (A) [circle,draw=black,minimum size=4mm,inner sep=0.1mm] at (-1,2) {\small $1$};
\node (q) [circle,draw=black,minimum size=4mm,inner sep=0.1mm] at (0,1) {\small $2$};
\node (r) [circle,draw=black,minimum size=4mm,inner sep=0.1mm] at (1.65,1) {\small $3$};
\draw[-] (0.8,0.8) -- (E)--(-1.65,0.8); 
\draw[-] (-1.2,2.8) -- (A)--(-0.8,2.8); 
\draw[-] (-0.2,1.8) -- (q)--(0.2,1.8);   
\draw[-] (1.85,1.8) -- (r)--(1.45,1.8); 
\draw[-] (E)--(0,-0.55); 
\draw[-] (-1.4,1.8) -- (F)--(-0.6,1.8);   
\draw[-] (E)--(F) node {};
\draw[-] (E)--(q) node  {};
\draw[-] (E)--(r) node {};
\draw[-] (F)--(A) node {};
\draw [NavyBlue,rounded corners,thick] (-0.15,-0.3) -- (-0.3,-0.1) -- (-0.3,1.1) -- (-0.15,1.3) -- (0.15,1.3) -- (0.3,1.1) -- (0.3,-.1) -- (0.15,-.3) -- cycle;
\draw [NavyBlue,rounded corners,thick] (-0.15,-0.4) -- (-0.4,-0.25) -- (-0.4,1.2) -- (-0.15,1.4) -- (1.8,1.4) -- (2,1.1) -- (2,0.85) -- (0.4,-0.25) -- (0.15,-0.4) -- cycle;
\draw [Plum,rounded corners,thick] (-0.15,-0.5) -- (-0.5,-0.35) -- (-1.4,0.9) -- (-1.4,1.2) -- (-1.15,1.5) -- (1.9,1.5) -- (2.1,1.2) -- (2.1,0.75) -- (0.5,-0.35) -- (0.15,-0.5) -- cycle;
\draw [Plum,rounded corners,thick] (-0.15,-0.6) -- (-0.6,-0.45) -- (-1.6,0.9) -- (-1.25,2.3) -- (-1,2.4) -- (1.9,1.6) -- (2.2,1.3) -- (2.2,0.7) -- (0.6,-0.45) -- (0.15,-0.6) -- cycle;
\end{tikzpicture} }
\caption{Substitution of nested operadic trees.}
\label{fig:subnested}
\end{figure} 

We note that any nested tree can be obtained from a family of trivially nested trees by successive substitutions. In general, these substitutions can be performed in different orders without changing the resulting nested tree. 

\begin{definition}[Increasing order on nestings] \label{def:nestorder} For a nested tree $(t,\mathcal{N})$, we order the nests of $\mathcal{N}$ by decreasing order of cardinality, and further order nests of same cardinality in increasing order according to their minimal elements. 
We obtain a total order on the set $\mathcal{N}$ and a corresponding unique sequence of substitution of trivially nested trees $(\cdots(t_1\circ_{i_1}t_2)\circ_{i_2} t_3) \cdots \circ_{i_k} t_{k+1})=(t,\mathcal{N})$.
\end{definition}

Let us recall the permutation introduced in Point (5) of \cref{prop:PropertiesKLoday}. 

\begin{definition} \label{def:compositionshuffle}
Let $t$ be a tree, and let $\mathcal{N}$ be a nesting of $t$ with only one non-trivial nest $N$. We contract the nest $N$ to obtain a new tree $t'$. We write $t'' =t(N)$ for the subtree induced by $N$, considered as an independent tree. We write $|E(t')|=p$ and $|E(t'')|=q$ and we renumber the edges of $t''$ from $p+1$ to $p+q$. We denote by $\sigma_N: E(t')\sqcup E(t'')\to E(t)$ the $(p,q)$-shuffle mapping each just renumbered edge of $t'$ and $t''$ to its label in $t$.
\end{definition}

\begin{definition}[The graded colored operad $\mathcal{O}_\infty$] \label{def:NOT} We denote by $\mathcal{O}_\infty$ the graded $\mathbb{N}$-colored operad whose $\NN$-colored collection is given by
$$ \mathcal{O}_\infty(n; {n_1,\ldots,n_k})=
\mathbb{K}\left\{
\begin{array}{cl}
\text{Planar nested tree } (t,\mathcal{N}) \in \mathrm{N}\PT{k} \text{ with a bijection } \sigma : \{1,\ldots, k\}\to V(t) \\
\text{such that } \sigma (i) \text{ has } n_i \text{ inputs for all } i   
\end{array} 
\right\} $$
if $n_1+\cdots+n_k-k+1=n$, and the trivial vector space otherwise. The homological degree of a basis element $(t,\mathcal{N},\sigma)$ is given by $|E(t)|-|\mathcal{N}|$. Let $(t',\mathcal{N}') \in \mathrm{N}\PT{k}$ and $(t'',\mathcal{N}'') \in \mathrm{N}\PT{l}$ be two nested trees with bijections $\sigma'$ and $\sigma''$. For any $i \in \{1,\ldots,k\}$ such that $\sigma'(i)\in V(t')$ has the same number of inputs as $t''$, partial composition is defined via substitution of nested trees \[ (t',\mathcal{N}',\sigma')\circ_i (t'',\mathcal{N}'',\sigma'') \coloneqq \pm ((t',\mathcal{N}') \circ_{\sigma'(i)} (t'',\mathcal{N}''),\sigma'\circ_i\sigma'')  \] where the permutation $\sigma'\circ_i \sigma''$, the symmetric groups action and the units are defined exactly as in $\mathcal{O}$, and the sign is induced by the choice of increasing order on nestings. 
\end{definition}

An example of partial composition in $\mathcal{O}_\infty$ is pictured in \cref{fig:subnested}. The degree 0 part of $\mathcal{O}_\infty$ forms a suboperad made up of fully nested trees.

\begin{proposition} The $\mathbb{N}$-colored operad $\mathcal{O}_\infty$ is free on operadic trees.
\end{proposition}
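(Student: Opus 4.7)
The plan is to identify a generating $\NN$-colored sub-collection explicitly and verify the universal property of a free operad directly. Let $X \subset \mathcal{O}_\infty$ denote the sub-collection consisting of trivially nested operadic trees, i.e.\ the elements $(t,\{E(t)\},\sigma)$ for $t \in \PT{k}$ with $k \geq 2$, together with the corollas $(t,\emptyset,\sigma) \in \mathrm{N}\PT{1}$ serving as units; these are in natural bijection with the basis elements of $\mathcal{O}$, which is precisely why they deserve to be called operadic trees. Let $\Phi : \mathcal{F}(X) \to \mathcal{O}_\infty$ be the canonical morphism from the free $\NN$-colored graded operad on $X$, sending a tree of labels to the iterated substitution of those labels. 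It suffices to show that $\Phi$ is an isomorphism of graded $\NN$-colored operads.

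For surjectivity, given a nested tree $(t,\mathcal{N},\sigma) \in \mathcal{O}_\infty$, organize the nests of $\mathcal{N}$ into a rooted tree $T_\mathcal{N}$ under reverse inclusion, with the trivial nest $E(t)$ at the root and two nests linked whenever one is the maximal proper sub-nest of the other. For each $N \in \mathcal{N}$, form the local trivially nested operadic tree $\ell_N$ by contracting in $t(N)$ each of the maximal proper sub-nests of $N$ to a single vertex. Labeling each node of $T_\mathcal{N}$ by the corresponding $\ell_N$ defines a basis element of $\mathcal{F}(X)$ whose image under $\Phi$ is $(t,\mathcal{N},\sigma)$; this realizes the iterated substitution described in \cref{def:nestorder}. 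For injectivity, the same construction supplies a canonical normal form: any basis element of $\mathcal{F}(X)$ reduces, via the operad axioms, to a unique chain $(\cdots(\ell_{N_1}\circ_{i_1}\ell_{N_2})\circ_{i_2}\cdots)\circ_{i_k}\ell_{N_{k+1}}$ dictated by the increasing order on $\mathcal{N}$, and distinct such chains yield distinct nested trees under $\Phi$.

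The main subtlety will be verifying the global coherence of the sign convention of \cref{def:NOT}, namely that different decompositions of the same nested tree as an iterated substitution of trivially nested trees contribute the same overall sign. It suffices to check this on the elementary move of exchanging two adjacent commuting substitutions: since the homological degree $|E(t)|-|\mathcal{N}|$ is additive under substitution and since such commuting operations correspond to an adjacent transposition in the total order of \cref{def:nestorder}, the Koszul sign introduced by the $\pm$ in \cref{def:NOT} is exactly the one required by graded operadic associativity. Thus $\Phi$ respects the graded operad structure and is an isomorphism.
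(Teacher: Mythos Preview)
Your proposal is correct and takes essentially the same approach as the paper's proof, which is a terse three-sentence sketch: substituting trivially nested (operadic) trees yields nested trees, and each nested tree arises uniquely from such iterated substitution up to the parallel and sequential axioms. You have simply made this explicit via the nest-hierarchy tree $T_\mathcal{N}$ and added the sign-coherence check that the paper leaves implicit; one minor wording point is that the corollas are the operadic units and so should not sit inside the generating collection $X$ itself, though your phrase ``serving as units'' suggests you already intend this.
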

\begin{proof} Substituting operadic trees produces nested trees. There is a unique way to write a nested tree by iterating this process, up to the parallel and sequential axioms. So, this operad is free.
\end{proof}

Now we turn $\mathcal{O}_\infty$ into a differential graded colored operad. The differential is the unique derivation extending the map $\partial$, defined on trivially nested operadic trees by \[ \partial(t,\mathcal{N},\sigma)\coloneqq -\sum_{N\in\mathcal{N}(t)}(-1)^{|E(t)\setminus N|}\textrm{sgn}(\sigma_N)(t,\mathcal{N}\cup \{N\},\sigma) \ , \] where the sum runs over all nests of $t$.

\begin{proposition} \label{prop:Oinfiniminimal} The dg $\mathbb{N}$-colored operad $\mathcal{O}_\infty$ is the minimal model $\Omega\mathcal{O}^{\as}$ of $\mathcal{O}$.
\end{proposition}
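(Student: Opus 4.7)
The plan is to identify $\mathcal{O}_\infty$ with the cobar construction $\Omega \mathcal{O}^{\as}$ of the Koszul dual cooperad, using the Koszul duality framework for (colored) operads as developed in \cite{VanDerLaan03, DehlingVallette15}. First I would exhibit the quadratic presentation of $\mathcal{O}$: generators are the two-vertex operadic trees, one for each partial composition $\circ_i$, and relations, obtained from three-vertex operadic trees, are precisely the parallel and sequential axioms of a non-symmetric operad. Koszulness of $\mathcal{O}$ then follows from standard arguments (e.g.\ a distributive law between the ``parallel'' and ``sequential'' quadratic data, or a PBW-type basis) carried out in the cited references.

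Next, I would identify the Koszul dual cooperad $\mathcal{O}^{\as}$. Its underlying graded module has a basis indexed by operadic trees $t$, placed in weight equal to the number of vertices and in homological degree $|E(t)| = |V(t)|-1$. The infinitesimal decomposition sends $t$ to the sum over all non-trivial splittings $t = t'\circ_i t''$, which are in bijection with the choices of a non-trivial nest of $t$. A dimension count, nest by nest, matches the size of the quadratic Koszul dual predicted by Koszulness.

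Then I would identify the cobar construction $\Omega \mathcal{O}^{\as}$ with $\mathcal{O}_\infty$ as graded colored operads. The cobar construction is free on $s^{-1}\overline{\mathcal{O}^{\as}}$, and its basis elements, iterated partial compositions of operadic trees in a free colored operad, are in bijection with nested operadic trees, where each nest records one substitution step; the total order of \cref{def:nestorder} fixes a canonical representative. The degree computes to $\sum_{N\in\mathcal{N}}(|N|-1) = |E(t)| - |\mathcal{N}|$, matching \cref{def:NOT}. The cobar differential, applied to a trivially nested operadic tree $(t,\{E(t)\},\sigma)$, is the sum over all non-trivial splittings $t = (t/N) \circ_{\sigma_N} t(N)$, which reassembles in the free operad to the nested tree $(t,\{E(t), N\},\sigma)$; the sign $(-1)^{|E(t)\setminus N|}$ comes from the desuspension, while $\mathrm{sgn}(\sigma_N)$ records the reordering of edges between the canonical labellings of $t/N$, $t(N)$ and $t$. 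This recovers exactly the formula for $\partial$. Minimality is automatic since $\mathcal{O}^{\as}$ is coaugmented with no cogenerators in weight one, so the cobar differential has no linear part.

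The main obstacle will be pinning down the sign conventions coherently. The interplay between the desuspension, the Koszul sign rule in the free colored operad, and the edge-reordering shuffle $\sigma_N$ must be verified carefully, especially in the multi-colored setting where Koszul duality is more delicate than for plain symmetric operads; this is the only genuinely technical part of the argument, the rest being an unpacking of definitions.
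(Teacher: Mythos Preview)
Your approach is essentially the same as the paper's: exhibit the binary quadratic presentation of $\mathcal{O}$, invoke Koszulness, identify $\mathcal{O}^{\as}$ with operadic trees, and match the cobar construction with nested trees and the stated differential. Two small points are worth flagging. First, the paper leans on the Koszul \emph{self-duality} of $\mathcal{O}$ (Van der Laan), which is what makes the basis of $\mathcal{O}^{\as}$ coincide with operadic trees without further computation; you only assert Koszulness and then describe $\mathcal{O}^{\as}$ by hand, which is fine but slightly less direct. Second, your degree computation contains a slip: the sum $\sum_{N\in\mathcal{N}}(|N|-1)$ is not $|E(t)|-|\mathcal{N}|$ in general (take any maximal nesting to see this). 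The generator attached to a nest $N$ is not the subtree on the edges of $N$ but the tree obtained by contracting the maximal nests strictly inside $N$; summing the edge counts of \emph{those} trees gives $|E(t)|$, and then the $|\mathcal{N}|$ desuspensions yield the correct degree $|E(t)|-|\mathcal{N}|$. With that correction your outline is sound and matches the paper's argument.
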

\begin{proof} One can compute the operad $\Omega\mathcal{O}^{\as}$ by using the binary quadratic presentation of $\mathcal{O}$ \cite[Definition 5]{DehlingVallette15}. Using the fact that the colored operad $\mathcal{O}$ is Koszul self-dual \cite[Theorem 4.3]{VanDerLaan03} and the bijection between composite and operadic trees \cite[Section 1.3]{DehlingVallette15}, one obtains $\mathcal{O}_\infty$ as defined above. The sign in the differential comes from the choice of the left-levelwise order on composite trees and application of the Koszul sign rule thereafter. The term $\textrm{sgn}(\sigma_N)$ comes from the decomposition map of $\mathcal{O}^{\as}$ and the term $(-1)^{|E(t)\setminus N|}$ comes from the desuspension in the definition of the differential in the cobar construction. 
\end{proof}

The part of $\mathcal{O}_\infty$ made up of the linear trees where each vertex has only one input gives the minimal model $\mathrm{Ass}_\infty$ of the operad $\mathrm{Ass}$. The part made up of the equivalences classes two-leveled trees with restricted substitution gives the minimal model $\mathrm{permAs}^{sh}_\infty$ of the permutad $\mathrm{permAs}^{sh}$ \cite{LodayRonco11}. Considering the associated non-symmetric operad and permutad, one recovers the minimal models $A_\infty$ \cite[Section 9.2.4]{LodayVallette12} and $\mathrm{permAs}_\infty$ \cite[Section 5.2]{LodayRonco11} -see also \cite{Markl19}, of $\mathrm{As}$ and $\mathrm{permAs}$, respectively.

\begin{remark} \label{rem:ForceyRonco} Under the bijection between nested trees and their tubed line graphs mentioned in \cref{rem:linegraph}, we recover the operation of substitution of tubings defined by S. Forcey and M. Ronco in \cite{ForceyRonco19}, and we observe that the family of clawfree block graphs is stable under this operation.  
\end{remark}

Algebras over the operad $\mathcal{O}_\infty$ are non-symmetric operads up to homotopy, as introduced by P. Van der Laan in \cite{VanDerLaan03}.

\begin{definition}[Non-symmetric operad up to homotopy]
\label{def:homotopyoperad}
A \emph{non-symmetric non-unital operad up to homotopy} is a family of graded vector spaces $\mathcal{P}=\{\mathcal{P}(n)\}_{n\geq 1}$ together with operations \[\mu_t : \mathcal{P}(n_1)\otimes\cdots\otimes \mathcal{P}(n_k)\to \mathcal{P}(n_1+\cdots+n_k-k+1) \] of degree $|E(t)|-1$ for each $t\in \PT{k}$ and all $k\geq 1$, where $n_1,\ldots,n_k$ are the number of inputs of the vertices of $t$, which satisfy the relations \[\sum_{t'\circ_i t''=t} (-1)^{|E(t)\setminus E(t'')|}\emph{sgn}(\sigma_N) \  \mu_{t'}\circ_i \mu_{t''}=0 \ , \] where the sum runs over all the subtrees $t''$ of $t$. 
\end{definition}
The operations $\mu_t$ for the corollas $t\in \PT{1}$ satisfy the relations $\mu_t\circ_1 \mu_t=0$, so they make the spaces $\{\mathcal{P}(n)\}_{n\geq 1}$ into chain complexes. The operations for trees with 2 vertices correspond to partial composition operations $\circ_i$ as in the definition of a non-symmetric operad. The presence of the operations $\mu_t$ for trees $t\in \PT{3}$ indicates that these partial compositions verify the parallel and sequential axioms only up to homotopy. The operations $\mu_t$ for trees $t\in \PT{4}$ are homotopies between these homotopies, and so on. 

\begin{example} 
As proved by P. Van der Laan in \cite[Theorem 5.7]{VanDerLaan03}, the singular $\mathbb{Q}$-chains on configuration spaces of points in the plane form an operad up to homotopy quasi-isomorphic to the operad of singular $\mathbb{Q}$-chains on the little discs operad. 
\end{example}

\subsection{Topological colored operad structure on the operahedra} \label{operadstructure}
In order to contemplate polytopal $\mathbb{N}$-colored operads, we need a suitable symmetric monoidal category of polytopes. We consider the following category, which is a slight modification of the symmetric monoidal category defined in \cite[Section 2.1]{MTTV19}. 
\begin{definition}[The category $\PolySub$]   $   $
\begin{enumerate}
    \item The objects are the disjoint unions $\coprod_{i=1,\ldots,r}P_i$ of non-necessarily distinct polytopes.
    \item Morphisms are disjoint union $\coprod_{i=1,\ldots,r}f_i$ of continuous maps $f_i: P_i\to Q_i$ where for each $i$, $f_i$ sends $P_i$ homeomorphically to the underlying set $|\mathcal{D}_i|$ of a polytopal subcomplex $\mathcal{D}_i\subset \La(Q_i)$ of $Q_i$ 
    such that $f_i^{-1}(\mathcal{D}_i)$ defines a polytopal subdivision of $P_i$.
\end{enumerate}
\end{definition}
The results of \cite{MTTV19} extend in a straightforward manner to this new category. The symmetric monoidal structure is given by the cartesian product of polytopes, and the unit is the trivial polytope made up of one point in $\RR^0$.

\medskip

We want to endow the Loday realizations of the operahedra of standard weight with a colored operad structure in the category $\PolySub$. The underlying set-theoretic operad structure is given on the set of face lattices by substitution of trees. The geometric avatar of this operation is the isomorphism $\Theta: \RR^{k-1} \times \RR^{l-1} \cong \RR^{n-1}$ introduced in the proof of Point~(5) of \cref{prop:PropertiesKLoday}.

\begin{problem} For each operahedron $P_t$, make a choice of an orientation vector $\vec v$ such that the family of diagonal maps $\triangle_{(t,\vec v)}$ commutes with the maps $\Theta$.
\end{problem}

Suppose that we have made a choice of an orientation vector for each operahedron. We fix $t\in \PT{n}$ with chosen orientation vector $\vec v$. We let $t' \in \PT{k}$ and $t'' \in \PT{l}$ be two trees, with chosen orientation vectors $\vec v'$ and $\vec v''$ respectively, such that $t' \circ_i t'' = t$ for some $i \in V(t')$. We denote by $\omega$ the weight $(1,\ldots,1,l,1,\ldots, 1)$ of length $k$, where $l$ is in position $i$. We want the following diagram to commute 
\[
\vcenter{\hbox{
\begin{tikzcd}[column sep=2.2cm, row sep=1.3cm]
P_{(t',\omega)}\times P_{t''}
\arrow[r,  "\Theta"] 
\arrow[d,  "\triangle_{(t',\vec v')}\times\triangle_{(t'',\vec v'')}"]
& P_{t} \arrow[d,  "\triangle_{(t,\vec v)}"] \\
P_{(t',\omega)}\times P_{(t',\omega)}
\times P_{t''} \times P_{t''} 
\arrow[r,  "(\Theta\times\Theta)(\id \times \sigma_2 \times \id)"]
& P_{t} \times P_{t}\ ,
\end{tikzcd}
}}\]
where $\sigma_2$ is the permutation of the two middle blocks of coordinates. The preimage of $\vec v$ under $\Theta$ determines two orientation vectors $\vec w' \in \RR^{k-1}$ and $\vec w'' \in \RR^{l-1}$ of $P_{(t',\omega)}$ and $P_{t''}$ explicitly given by \[ \vec w'= (v_{\sigma (1)},\ldots , v_{\sigma (k-1)}) \quad \text{and} \quad \vec w''= (v_{\sigma (k)},\ldots , v_{\sigma (k+l-1)}) \ , \] where $\sigma$ is a $(k-1,l-1)$-shuffle.  

\begin{proposition} \label{prop:samechambers} Suppose that for each map $\Theta$, the two orientation vectors $\vec w'$ and $\vec w''$ in the preimage $\Theta^{-1}(\vec v)$ are in the same chambers of $\mathcal{H}_{t'}$ and $\mathcal{H}_{t''}$ as $\vec v'$ and $\vec v''$ respectively. Then, the family of diagonal maps $\triangle_{(t,\vec v)}$ commutes with the maps $\Theta$.
\end{proposition}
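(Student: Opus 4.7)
The plan is to reduce via chamber invariance to the case $\vec v' = \vec w'$ and $\vec v'' = \vec w''$, and then to verify commutativity of the diagram on the nose using the pointwise \emph{bot-top} description of the diagonal from \cref{def:Diag}.

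First, I would observe that the fundamental hyperplane arrangement of a polytope depends only on its normal fan, and that the Loday realizations $P_{t'}$ and $P_{(t',\omega)}$ share the same normal fan (they are generalized permutahedra with identical face lattices, by \cref{prop:PropertiesKLoday} and \cref{coroll:geometricproperties}). Hence $\mathcal{H}_{P_{(t',\omega)}} = \mathcal{H}_{t'}$, and similarly for $t''$. Combined with the hypothesis and chamber invariance (\cref{prop:chamberinvariance}), this yields $\triangle_{(P_{(t',\omega)},\vec v')} = \triangle_{(P_{(t',\omega)},\vec w')}$ and $\triangle_{(t'',\vec v'')} = \triangle_{(t'',\vec w'')}$, so it is enough to prove the diagram commutes with $\vec w'$ and $\vec w''$ in place of $\vec v'$ and $\vec v''$.

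Next, I would fix $(x,y) \in P_{(t',\omega)} \times P_{t''}$, set $z := \Theta(x,y)$, and establish the geometric identity
\[
\Theta^{-1}\bigl(P_t \cap \rho_z P_t\bigr) = \bigl(P_{(t',\omega)}\cap\rho_x P_{(t',\omega)}\bigr) \times \bigl(P_{t''}\cap\rho_y P_{t''}\bigr).
\]
The key input is Point~(5) of \cref{prop:PropertiesKLoday}, which identifies $\Theta(P_{(t',\omega)} \times P_{t''})$ with a facet $F$ of $P_t$. If $\langle c, \cdot\rangle \leq d$ is the facet-defining inequality (so that $\langle c, z\rangle = d$), then any $w \in P_t \cap \rho_z P_t$ satisfies both $\langle c, w\rangle \leq d$ and $\langle c, 2z-w\rangle \leq d$, forcing equality and $w \in F$. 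The product decomposition then follows by direct translation under $\Theta$.

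Finally, since $\Theta$ is a permutation of coordinates it is orthogonal, and the defining relation $\Theta^{-1}(\vec v) = (\vec w', \vec w'')$ gives $\langle \vec v, \Theta(a,b)\rangle = \langle \vec w', a\rangle + \langle \vec w'', b\rangle$. Optimization of $\langle \vec v, -\rangle$ over the identified product region therefore splits into two independent optimizations over the factors, and the bot-top description of \cref{def:Diag} immediately yields the desired identity $\triangle_{(t,\vec v)}(\Theta(x,y)) = \bigl(\Theta(x_b, y_b), \Theta(x_t, y_t)\bigr)$, where $(x_b, x_t) = \triangle_{(t',\vec w')}(x)$ and $(y_b, y_t) = \triangle_{(t'',\vec w'')}(y)$, which is exactly the other composition in the diagram. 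I expect the only delicate point to be the inclusion $P_t \cap \rho_z P_t \subseteq F$: while the computation itself is short, it is what makes the entire decomposition work, and it crucially uses that $F$ is a face cut out by a linear inequality rather than merely a subset of~$P_t$.
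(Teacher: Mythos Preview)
Your proposal is correct and follows essentially the same approach as the paper's proof: reduce via chamber invariance (\cref{prop:chamberinvariance}) to the vectors $\vec w',\vec w''$, then verify the diagram commutes using the pointwise bot-top description (\cref{def:Diag}). The paper dismisses the second step as ``straightforward to verify''; you have carried it out in full, and in doing so made explicit two points the paper leaves implicit --- that $P_{t'}$ and $P_{(t',\omega)}$ share the same fundamental hyperplane arrangement (since they share a normal fan), and the key identity $\Theta^{-1}(P_t\cap\rho_z P_t)=(P_{(t',\omega)}\cap\rho_x P_{(t',\omega)})\times(P_{t''}\cap\rho_y P_{t''})$, whose proof via the facet inequality is exactly the right observation.
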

\begin{proof} \cref{prop:chamberinvariance} shows that $\triangle_{(t',\vec w')}=\triangle_{(t',\vec v')}$ and $\triangle_{(t'',\vec w'')}=\triangle_{(t'',\vec v'')}$. The fact that the above diagram commutes is then straightforward to verify, using the pointwise definition of $\triangle_{(t,\vec v)}$. 
\end{proof}

Recall from \cref{def:principalvector} that a \emph{principal orientation vector} $\vec v \in \mathbb {R}^{n-1}$ is such that $\sum_{i\in I}v_i > \sum_{j \in J}v_j$ for all $(I,J) \in D(n-1)$. 

\begin{proposition} \label{prop:thetacommutes} For any choice of principal orientation vector $\vec v$ for every Loday realization of operahedron $P_t$ of standard weight, the family of diagonal maps $\triangle_{(t,\vec v)}$ commutes with the maps $\Theta$. 
\end{proposition}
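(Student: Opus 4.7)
The plan is to reduce to \cref{prop:samechambers}, which means verifying that $\vec w'$ and $\vec v'$ lie in the same chamber of $\mathcal{H}_{t'}$, and likewise that $\vec w''$ and $\vec v''$ lie in the same chamber of $\mathcal{H}_{t''}$. Since $\vec v'$ and $\vec v''$ are principal by hypothesis, it is enough to establish two facts: (i) both $\vec w'$ and $\vec w''$ are themselves principal orientation vectors, in the sense of \cref{def:principalvector}; and (ii) any two principal vectors of the same dimension lie in a common chamber of the fundamental hyperplane arrangement.

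For (i), I would exploit that $\sigma : E(t')\sqcup E(t'') \to E(t)$ is a $(k-1,l-1)$-shuffle, so its restriction to each of the two blocks $\{1,\ldots,k-1\}$ and $\{k,\ldots,k+l-2\}$ is strictly order-preserving. Given any $(I',J') \in D(k-1)$, the images $\sigma(I')$ and $\sigma(J')$ are disjoint equinumerous subsets of $\{1,\ldots,n-1\}$ satisfying $\min(\sigma(I')\cup\sigma(J')) = \sigma(\min(I'\cup J')) \in \sigma(I')$, so the pair $(\sigma(I'),\sigma(J'))$ belongs to $D(n-1)$. The principality of $\vec v$ then yields
\[ \sum_{i \in I'} w'_i \;=\; \sum_{i \in I'} v_{\sigma(i)} \;>\; \sum_{j \in J'} v_{\sigma(j)} \;=\; \sum_{j \in J'} w'_j, \]
which proves that $\vec w'$ is principal. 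The same argument applied to the second block of $\sigma$ handles $\vec w''$.

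For (ii), I would observe that the principal region in $\RR^{k-1}$, cut out by the strict inequalities $\sum_{i \in I} v_i > \sum_{j \in J} v_j$ for all $(I,J) \in D(k-1)$, is exactly one open chamber of the fundamental hyperplane arrangement of the $(k-1)$-dimensional permutahedron given by \cref{thm:permutohyperplane}: namely the chamber lying on a prescribed positive side of every hyperplane. Since by \cref{prop:inclusionhyperplanes} the arrangement $\mathcal{H}_{t'}$ is a sub-arrangement of the permutahedron's one, the principal region is contained in a single chamber of $\mathcal{H}_{t'}$. In particular $\vec v'$ and $\vec w'$ share a chamber of $\mathcal{H}_{t'}$, and the same reasoning places $\vec v''$ and $\vec w''$ in a common chamber of $\mathcal{H}_{t''}$. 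An appeal to \cref{prop:samechambers} then concludes the argument.

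The only real technical work lies in the shuffle bookkeeping of step (i); step (ii) and the application of \cref{prop:samechambers} are essentially formal given what has been proved so far. I do not expect any serious obstacle, but the one spot that will require care is checking that the ``first non-zero coordinate is $1$'' condition built into the definition of $D$ is respected by $\sigma$, which is precisely what the order-preservation of the shuffle on each block ensures.
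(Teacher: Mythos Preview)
Your proposal is correct and follows exactly the paper's approach: the paper's proof simply states that since $\sigma$ is a $(k-1,l-1)$-shuffle the vectors $\vec w'$ and $\vec w''$ are again principal, and then invokes \cref{prop:samechambers}. Your step~(i) spells out the shuffle bookkeeping the paper leaves implicit, and your step~(ii) makes explicit why two principal vectors share a chamber; both are faithful elaborations of the one-line argument in the paper.
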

\begin{proof} Since $\sigma$ is a $(k-1,l-1)$-shuffle, the two vectors $\vec w'$ and $\vec w''$ are again principal orientation vectors. We conclude with \cref{prop:samechambers}.
\end{proof}

\begin{proposition}[Transition map {\cite[Proposition 7]{MTTV19}}]\label{prop:Transition}
Let $(P, \vec v)$ and $(Q,\vec w)$ be two positively oriented polytopes, with a combinatorial equivalence  $\Phi: \La(P)\xrightarrow{\cong} \La(Q)$.
Suppose that tight coherent subdivisions $\sF_{(P, \vec v)}$ and $\sF_{(Q, \vec w)}$ are combinatorially equivalent under $\Phi\times \Phi$. 
\begin{enumerate}
\item There exists a unique continuous map \[\tr=\tr_P^Q : P\to Q\ ,\]
which extends the restriction of $\Phi$ to the set of vertices and which commutes with the respective diagonal maps. 
\item The map $\tr$ is an isomorphism in the category $\PolySub$, whose correspondence of faces agrees with $\Phi$.
\end{enumerate}
\end{proposition}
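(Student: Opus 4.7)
The plan is to construct $\tr$ as the unique continuous extension of $\Phi$ on the vertices of $P$ that is forced by commutativity with the two diagonal maps. For the uniqueness, suppose $\tr$ satisfies the hypotheses. Then for every $z\in P$ with $\triangle_{(P,\vec v)}(z)=(x,y)$, commutativity gives $(\tr(x),\tr(y))=\triangle_{(Q,\vec w)}(\tr(z))$; since $\tr(z)$ is the midpoint of this pair by the pointwise description of the bot-top diagonal (\cref{def:Diag}), $\tr$ must satisfy the functional equation
\[
\tr(z)=\tfrac{1}{2}\bigl(\tr(x)+\tr(y)\bigr).
\]
Combined with $\tr(v)=\Phi(v)$ for every vertex $v$ of $P$, this relation propagates through the subdivision: the vertices of $\sF_{(P,\vec v)}$ are midpoints of vertex pairs of $P\times P$ appearing in it, and continuity then determines $\tr$ on every cell.

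For existence I would proceed by induction on $\dim P$, using the polytopal complex structure induced by $\sF_{(P,\vec v)}$. The case $\dim P=0$ is trivial. In the inductive step, apply the hypothesis to each proper face $F$ of $P$ (which inherits a positive orientation from $\vec v$, a combinatorial equivalence from $\Phi|_F$, and a combinatorially equivalent subdivision by restriction) to obtain compatible transition maps $\tr|_F\colon F\to\Phi(F)$; by the inductive uniqueness on intersections of faces these glue to a continuous map on $\partial P$. Then extend across the interior cell by cell: each top cell $C=(F+G)/2$ of the subdivision is affinely identified via the midpoint map with the product $F\times G$, and matches, under $\Phi\times\Phi$, the top cell $(\Phi(F)+\Phi(G))/2\cong\Phi(F)\times\Phi(G)$ of $\sF_{(Q,\vec w)}$. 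Set $\tr|_C$ to be the map corresponding to $\tr|_F\times\tr|_G$ under these identifications. Agreement on common facets of adjacent top cells follows from the combinatorial equivalence of subdivisions together with the inductive uniqueness applied in lower dimensions.

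The hard part is the ``large'' cells $C=(F+G)/2$ with $F=P$ (which forces $G=\{\tp(P)\}$) or $G=P$ (which forces $F=\{\bm(P)\}$), since there the inductive hypothesis does not directly supply $\tr|_F$ and the cellwise recipe becomes the self-referential relation $\tr((p+\tp(P))/2)=\tfrac{1}{2}(\tr(p)+\Phi(\tp(P)))$. I would resolve this by observing that such a cell is an affine rescaling of $P$ anchored at $\tp(P)$: once $\tr$ has been constructed on the complementary cellular region (the cells both of whose factors are proper faces, together with $\partial P$), the self-similarity forces $\tr$ on the remaining region, and one then checks continuous consistency on shared cell facets; this is the technical heart of the argument. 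Once $\tr$ is built, Point~(2) is immediate: the cellwise construction realises $\tr$ as a homeomorphism on each cell of $\sF_{(P,\vec v)}$ onto its combinatorial counterpart in $\sF_{(Q,\vec w)}$, so $\tr$ is a morphism in $\PolySub$ whose face correspondence is $\Phi$ by construction, and symmetry (swapping $P$ and $Q$) yields a two-sided inverse $\tr_Q^P$ and hence the asserted isomorphism.
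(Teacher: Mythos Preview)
The paper does not supply its own proof of this proposition: it is quoted verbatim from \cite[Proposition~7]{MTTV19}, and immediately after the statement the paper only remarks that ``the map $\tr$ constructed explicitly in \cite{MTTV19} has a strong `fractal' character.'' So there is nothing in the present paper to compare your argument against beyond that one sentence.

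That said, your sketch is aligned with the construction in \cite{MTTV19}. The uniqueness argument via the midpoint functional equation $\tr(z)=\tfrac12(\tr(x)+\tr(y))$ is exactly the mechanism that forces the fractal behaviour the paper alludes to, and your identification of the self-referential cells $(P+\tp(P))/2$ and $(\bm(P)+P)/2$ as the crux is correct: in \cite{MTTV19} these are handled by iterating the contraction $z\mapsto (z+\tp(P))/2$ (resp.\ its $\bm$ analogue), which converges to the vertex and lets one define $\tr$ on a dense set first. Your inductive, cell-by-cell existence outline is the right shape, though in the cited source the argument is organised slightly differently (one builds $\tr$ directly as a limit rather than gluing over cells and then patching the two large cells). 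If you want this to stand as a proof you would need to make precise the convergence and the verification that the limit is continuous and respects the subdivision; as written, the phrase ``the self-similarity forces $\tr$ on the remaining region, and one then checks continuous consistency'' is where all the actual work hides.
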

The map $\tr$ constructed explicitly in \cite{MTTV19} and has a strong "fractal" character. We fix a tree $t \in \PT{n}$, a weight $\omega$ of the form $(1,\ldots,1,l,1,\ldots,1)$ with $l\geq 1$ and a principal orientation vector $\vec v \in \RR^{n-1}$. We apply \cref{prop:Transition} to the polytopes $P_t$ and $P_{(t,\omega)}$ and obtain a map $\tr : P_t  \longrightarrow P_{(t,\omega)}$.

\begin{definition}[Partial composition and symmetric group action]
We consider the $\mathbb{N}$-colored collection \[O_{\infty}(n_1,\ldots,n_k;n)\coloneqq \coprod_{(t,\sigma) \in \mathcal{O}(n_1,\ldots,n_k;n)}P_t \ . \]
Let $(t',\sigma')$ and $(t'', \sigma'')$ be two composable operadic trees with $k$ and $l$ vertices, respectively. We denote by $(t,\sigma)\coloneqq (t',\sigma') \circ_i (t'',\sigma'')$ their composition at vertex $\sigma'(i) \in V(t')$ in $\mathcal{O}$. We denote by $\omega$ the weight $(1,\ldots,1,l,1,\ldots, 1)$ of length $k$, where $l$ is in position $i$. We define the \emph{partial composition map} by 
\[
\vcenter{\hbox{
\begin{tikzcd}[column sep=1cm]
\circ_i\ : \ P_{t'}\times P_{t''}
\arrow[r,  "\tr\times \id"]
& P_{(t',\omega)}\times P_{t''}
 \arrow[r,hookrightarrow, "\Theta"]
&
P_t \ .
\end{tikzcd}
}}  \]
For $\kappa \in \mathbb{S}_n$, we define the \emph{symmetric group action} on the polytopes associated to $(t,\sigma)$ and $(t,\kappa\circ\sigma)$ by the identity map $P_t\to P_t$. 
\end{definition}

\begin{samepage}
\begin{theorem}\label{thm:MainOperad}\leavevmode

\begin{enumerate}
\item The $\mathbb{N}$-colored collection $\{O_\infty(n_1,\ldots,n_k;n) \ | \ n_1,\ldots,n_k,n \in \mathbb{N}\}$, together with the partial composition maps $\circ_i$, the symmmetric group actions and the 0-dimensional unit elements $\{O_\infty(n;n) \ | \ n \in \mathbb{N} \}$, forms a symmetric colored operad in the category $\PolySub$. 

\item This colored operad structure extends the topological operad structure on the vertices of the operahedra, that is, the fully nested trees.

\item The maps $\{\triangle_{(t,\vec v)} : P_t \to P_t\times P_t\}_{(t,\sigma) \in \mathcal{O}}$, where $\vec v$ are principal orientation vectors, form a morphism of symmetric colored operads in the category $\PolySub$.
\end{enumerate}
\end{theorem}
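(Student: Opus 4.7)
The plan is to follow the strategy of \cite[Theorem 1]{MTTV19}, with the rigidity of \cref{prop:Transition} as the central tool. First, each partial composition $\circ_i = \Theta \circ (\tr \times \id)$ is a morphism in $\PolySub$: the transition map is a $\PolySub$-isomorphism by \cref{prop:Transition}(2), while $\Theta$ identifies $P_{(t',\omega)} \times P_{t''}$ with the facet of $P_t$ indexed by the nest $V(t'')$ via Point~(5) of \cref{prop:PropertiesKLoday}, so $\circ_i$ is a homeomorphism onto a polytopal subcomplex of $P_t$ whose preimage subdivides the source. I would prove the three parts in the order (2), (3), (1).

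Part (2) is immediate: on vertices, which correspond to fully nested trees, $\circ_i$ reduces to the substitution of fully nested operadic trees, recovering the set-theoretic operad structure of $\mathcal{O}$. For part (3), I would prove that $\circ_i$ intertwines the diagonals by expanding $\circ_i = \Theta \circ (\tr \times \id)$, using \cref{prop:thetacommutes} to commute the diagonal on $P_t$ past $\Theta$, then \cref{prop:Transition}(1) to commute it past the transition map. The subtle point is that the preimage vectors $\vec w', \vec w''$ produced by $\Theta^{-1}$ need not equal the chosen orientation vectors on the factors; however they remain principal, since the defining inequalities of \cref{def:principalvector} are preserved under restriction to a $(k-1,l-1)$-shuffle block of coordinates. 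As the principal chamber is uniquely defined by these strict inequalities, \cref{prop:chamberinvariance} ensures all choices induce the same bot-top diagonal, closing the diagram.

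The main obstacle is part (1), specifically sequential and parallel associativity, since units are trivial (each corolla in $\PT{1}$ has no internal edges, so $P_{c_n}$ is a point) and equivariance is automatic ($P_t$ depends only on the planar tree $t$, not on the labeling $\sigma$, so the symmetric action is by identity maps). For associativity, each axiom compares two iterated compositions $P_{t_1} \times P_{t_2} \times P_{t_3} \to P_T$ whose images both coincide with the product face of $P_T$ indexed by the nesting built from the two substitutions, itself a product of operahedra by Point~(5) of \cref{prop:PropertiesKLoday}. On vertices, both iterated maps reduce to iterated tree substitution and thus agree by set-theoretic associativity in $\mathcal{O}$; both also commute with the principally oriented diagonal on the target by two applications of part (3). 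Invoking the uniqueness statement of \cref{prop:Transition}, applied to this face together with the positive orientation it inherits from $P_T$, forces the two iterated compositions to coincide in $\PolySub$, completing the proof.
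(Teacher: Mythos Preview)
Your proposal is correct and follows essentially the same approach as the paper, which simply states that once \cref{prop:thetacommutes} is available one applies the proof of \cite[Theorem~1]{MTTV19} \emph{mutatis mutandis}, with the symmetric group action and units being straightforward. You have in fact unpacked that reference explicitly: the use of \cref{prop:Transition} for the uniqueness argument in associativity, the reduction of Part~(3) to \cref{prop:thetacommutes} together with \cref{prop:Transition}(1), and the observation that principal orientation vectors restrict to principal vectors under the shuffle decomposition are precisely the ingredients of the \cite{MTTV19} template adapted to the present colored setting.
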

\end{samepage}

\begin{proof} Once we have in hand \cref{prop:thetacommutes} asserting that the diagonal maps commute with the maps $\Theta$, we can apply the proof of \cite[Theorem 1]{MTTV19} \emph{mutatis mutandis}. The additional facts involving the symmetric groups action and units are straightforward to verify. 
\end{proof}

\begin{remark} \label{rem:automorphisms} The proof of \cref{thm:MainOperad} shows that any family of orientation vectors satisfying \cref{prop:samechambers} induces a colored operad structure on the operahedra. There is more than one such family: consider for instance the vectors $\vec v$ with strictly descreasing coordinates which satisfy $\sum_{i\in I}v_i>\sum_{j\in J}v_j$ for all $I,J\subset \{1,\ldots,n\}$ such that $I\cap J=\emptyset$, $|I|=|J|\geq 2$ and $\max (I\cup J) \in I$. It would be interesting to know how many such families exist, and how they are related to each other.
\end{remark}

\subsection{Tensor product of operads up to homotopy}

We consider the set of all ordered bases of a finite-dimensional vector space $V$. 
We declare two bases \emph{equivalent} if the unique linear endomorphism of $V$ sending one basis to the other has positive determinant. 
In this way, we obtain two equivalence classes of ordered bases. 

\begin{definition} An \emph{orientation} of $V$ is a bijection between the equivalence classes of ordered bases and the set $\{+1,-1\}$. Any basis in the first equivalence class is called a \emph{positively oriented basis}.
\end{definition}

So there are exactly two distinct orientations of $V$. 

\begin{definition}[Cellular orientation of a polytope] Let $P\subset\RR^n$ be a polytope, and let $F$ be a face of $P$. A \emph{cellular orientation of $F$} is a choice of orientation of its linear span. A \emph{cellular orientation of $P$} is a choice of cellular orientation for each face $F$ of $P$. 
\end{definition}

An orientation vector of $P$, in the sense of \cref{def:orientedpolytope}, induces a cellular orientation of the 1-skeleton of $P$. 

\begin{proposition} \label{prop:CWcomplexstructure} A cellular orientation of a polytope $P$ makes it into a regular CW complex. Moreover, the choice of a cellular orientation for every operahedron promotes the colored operad $O_\infty$ to an operad in CW complexes and \cref{thm:MainOperad} holds in this category. 
\end{proposition}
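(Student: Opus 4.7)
The plan is to treat the two statements separately. For the first, I would observe that any polytope $P$ carries a canonical CW structure whose open cells are the relative interiors $\mathring{F}$ of the faces $F\in\La(P)$. Since each closed face of a polytope is homeomorphic to a closed ball of the appropriate dimension via its convex structure, one can choose characteristic maps $\Phi_F : D^{\dim F}\to \overline{F}$ that are homeomorphisms on the entire closed ball, not merely on its interior; this makes the CW complex regular in the standard sense. A cellular orientation of $P$ provides exactly the data needed to pick each $\Phi_F$ canonically up to orientation-preserving reparametrization, and to fix compatibly the induced orientations on the boundary attaching maps, so that the regular CW complex comes equipped with coherent orientations of all its cells.

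For the second statement, I would verify that with a chosen cellular orientation on every operahedron, the structural maps of the colored operad $O_\infty$—namely the partial compositions $\circ_i$, the symmetric group actions, the unit elements and the diagonals $\triangle_{(t,\vec v)}$—are cellular maps of CW complexes. This is essentially guaranteed by \cref{thm:MainOperad}: all these maps are morphisms in $\PolySub$, meaning each such map $f:P\to Q$ sends $P$ homeomorphically onto the underlying set $|\mathcal{D}|$ of a polytopal subcomplex $\mathcal{D}\subseteq\La(Q)$, and $f^{-1}(\mathcal{D})$ defines a polytopal subdivision of $P$. Taking the common refinement of the ambient CW structure of $P$ with $f^{-1}(\mathcal{D})$, the map $f$ sends each cell of this refined structure homeomorphically onto a cell of $Q$; it is therefore cellular. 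The symmetric group actions are cellular because they act as the identity on the underlying spaces, and the units are cellular because the corollas correspond to $0$-dimensional polytopes, for which there is nothing to check.

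The only remaining point is to verify that the associativity, unit, and equivariance axioms from \cref{thm:MainOperad} persist when the diagrams are interpreted in $\CW$ rather than in $\PolySub$. For this, I would invoke the natural forgetful functor $\PolySub\to\CW$ that sends a polytope to its regular CW structure (with orientations chosen as above); this functor is faithful and strong symmetric monoidal for the cartesian product, so every commutative diagram witnessing the operad axioms in $\PolySub$ transports unchanged to $\CW$. The main (mild) obstacle is therefore only to confirm that composition maps $\circ_i$, obtained as $\Theta\circ(\tr\times\id)$, remain cellular after the refinement induced by $\tr$—which follows because $\tr$ is, by construction in \cref{prop:Transition}, an isomorphism in $\PolySub$ whose correspondence of faces agrees with the combinatorial equivalence of the two face lattices. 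Consequently, the analogue of \cref{thm:MainOperad} holds verbatim in the category of CW complexes.
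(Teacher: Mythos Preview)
Your proposal is correct and follows essentially the same route as the paper's own proof: establish the regular CW structure on each polytope from the cellular orientation, observe that morphisms in $\PolySub$ are automatically cellular, and then transport the operad axioms of \cref{thm:MainOperad} verbatim to the CW category. The paper's argument is considerably terser (it simply notes that a cellular orientation corresponds to a choice of generator of top homology and hence of a degree-one attaching map, and then declares the rest ``clear''), whereas you spell out why $\PolySub$-morphisms are cellular and package the transport via a strong monoidal forgetful functor; but the underlying strategy is the same.
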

\begin{proof} The choice of a cellular orientation of a face $F$ corresponds to the choice of a generator of the top homology group of $F$. Thus, it makes sense to choose a degree one attaching map from the boundary of the $\dim(F)$-sphere to the boundary of $F$. We endow $P$ with the regular CW structure given by a family of such attaching maps. Now it is clear that the morphisms in the category $\PolySub$ define cellular maps, and that the proof of \cref{thm:MainOperad} can be performed \emph{mutatis mutandis} in the category of CW complexes.
\end{proof}

One can thus apply the cellular chains functor to $O_\infty$ and obtain a colored operad in chain complexes. 

\begin{theorem} \label{thm:choiceorientation} There is a choice of cellular orientation that yields an isomorphism of differential graded symmetric colored operads $C_{\bullet}^{\textrm{cell}}(O_\infty)\cong\mathcal{O}_\infty$.
\end{theorem}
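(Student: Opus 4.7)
The plan is to apply the cellular chains functor to the topological colored operad structure of \cref{thm:MainOperad}, identify the resulting dg colored operad on the nose with $\mathcal{O}_\infty$ on the level of graded vector spaces, and then match partial compositions and differentials by choosing cellular orientations recursively. First, by \cref{prop:CWcomplexstructure}, any choice of cellular orientations promotes $O_\infty$ to a colored operad in regular CW complexes, to which the cellular chains functor $C_\bullet^{\textrm{cell}}$ applies and yields a dg symmetric $\NN$-colored operad. Point (4) of \cref{prop:PropertiesKLoday} identifies the cells of $P_t$ with the nestings of $t$, and codimension $|\mathcal{N}|-1$ in $P_t$ translates to homological degree $|E(t)|-|\mathcal{N}|$, which agrees exactly with the degree convention of \cref{def:NOT}. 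On underlying graded $\NN$-colored collections this gives a canonical bijection between bases of $C_\bullet^{\textrm{cell}}(O_\infty)$ and $\mathcal{O}_\infty$.

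Next I would choose cellular orientations in a recursive, operadically compatible way. For each $t$, orient the top cell of $P_t$ by a principal orientation vector $\vec v$ (\cref{def:principalvector}). For a face $F$ of $P_t$ labeled by a nesting $\mathcal{N}=\{N_1,\ldots,N_{k},E(t)\}$, the iterated application of Point (5) of \cref{prop:PropertiesKLoday} together with the increasing order on nests (\cref{def:nestorder}) identifies $F$ with a product of lower-dimensional Loday realizations $P_{t_0}\times P_{t_1}\times\cdots\times P_{t_k}$ through a permutation of coordinates built from the shuffles $\sigma_{N_j}$ of \cref{def:compositionshuffle}. I would orient $F$ as the product of the top cellular orientations of its factors, multiplied by the sign of the composite coordinate shuffle. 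This makes the cellular orientation of every face recursively determined by the top orientations of the operahedra and by the canonical order on nestings.

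With these orientations in place, I would check that both the operad structure maps and the differential match those of $\mathcal{O}_\infty$. The partial composition $\circ_i$ is $\Theta\circ(\tr\times \id)$; since $\tr$ is a combinatorial isomorphism of CW complexes (\cref{prop:Transition}), it acts as the identity on cellular chains, while $\Theta$ identifies $P_{(t',\omega)}\times P_{t''}$ with a specific facet of $P_t$. By the recursive definition of orientations, the sign that appears when inserting a nested tree $(t'',\mathcal{N}'')$ into a nested tree $(t',\mathcal{N}')$ is exactly the sign produced by reordering the concatenated list of nests according to the increasing order of \cref{def:nestorder}, which is precisely the sign in the partial composition of $\mathcal{O}_\infty$. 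For the differential, the cellular boundary of the cell $(t,\mathcal{N})$ is a signed sum over codimension-one subfaces, which by Point (4) of \cref{prop:PropertiesKLoday} are indexed by nests $N\notin \mathcal{N}$ compatible with $\mathcal{N}$. A local computation in the product chart around such a subface, using the recursive orientation and comparing the position of the new nest $N$ inside the increasing order on $\mathcal{N}\cup\{N\}$, produces exactly the sign $-(-1)^{|E(t)\setminus N|}\textrm{sgn}(\sigma_N)$ prescribed by \cref{prop:Oinfiniminimal}.

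The main obstacle is the sign bookkeeping in the last step: one must simultaneously orient all faces of all operahedra so that both the associativity of partial compositions \emph{and} the Leibniz compatibility of the cellular boundary with composition produce precisely the signs coming from the Koszul cobar construction $\Omega\mathcal{O}^{\as}$. The crucial ingredient making this possible is the existence of the canonical total order on nests of \cref{def:nestorder}, which fixes a normal form for every cell of every operahedron and turns the otherwise ambiguous shuffle signs into a coherent sign rule. Once this orientation datum is fixed, the remaining compatibilities with the symmetric groups actions and units are immediate from \cref{thm:MainOperad}, and the resulting morphism $C_\bullet^{\textrm{cell}}(O_\infty)\to \mathcal{O}_\infty$ is a bijection on basis elements preserving degree, partial composition, differential and symmetric actions, hence an isomorphism of dg symmetric colored operads.
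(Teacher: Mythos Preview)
Your overall strategy matches the paper's: orient top cells explicitly, propagate orientations to faces via the operadic product decomposition in the increasing order on nests (\cref{def:nestorder}), and then verify that the cellular boundary reproduces the signs of $\Omega\mathcal{O}^{\as}$. Two points deserve attention.

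First, a principal orientation vector in the sense of \cref{def:principalvector} does not by itself orient the top cell of $P_t$; it orients the $1$-skeleton. The paper fixes instead a concrete positively oriented basis of the affine span of $P_t$, namely $e_j=(1,0,\ldots,0,-1_{j+1},0,\ldots,0)$ for $1\leq j\leq n-2$, and then transports this under the symmetric group action by the sign of the permutation. You should replace your ``orient by $\vec v$'' with such an explicit basis choice; otherwise the recursive definition of face orientations is not well posed.

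Second, and more importantly, the phrase ``a local computation in the product chart \ldots produces exactly the sign $-(-1)^{|E(t)\setminus N|}\mathrm{sgn}(\sigma_N)$'' is precisely the content of the theorem, and in the paper it is carried out rather than asserted. Concretely, for a facet $(t,\{N\})$ one writes down the images $\Theta(e_j')$, $\Theta(e_j'')$ of the chosen bases of the two factors, picks an outward normal $\nu$ to the facet (built from the characteristic vector $\vec N$), and computes the determinant $\det(\nu,\Theta(e_j'),\Theta(e_j''))$ in the basis $(e_j)$. This computation splits into two cases according to whether $1\in N$ or not, because the shuffle $\sigma_N$ behaves differently on the first coordinate; in each case the determinant evaluates to a positive multiple of $-(-1)^{|E(t)\setminus N|}\mathrm{sgn}(\sigma_N)$. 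Without this explicit check your argument remains a plausible outline but not a proof: there is no a priori reason why a ``natural'' recursive orientation scheme must reproduce the cobar signs rather than some twist of them, and the case distinction on $1\in N$ is exactly where a careless choice would go wrong.
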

\begin{proof} By definition, the operadic structure of $O_\infty$ coincides cellularly with the operadic structure of $\mathcal{O}_\infty$, and the boundary map of $C_{\bullet}^{\textrm{cell}}(O_\infty)$ coincides up to sign with the differential of $\mathcal{O}_\infty$. We make an explicit choice of orientations and prove that we recover the signs of $\mathcal{O}_\infty$. We build on the work of T. Mazuir who recovered this way in \cite[I,Section 4]{Mazuir21} the signs of the operad $A_\infty$. For a left-recursive operadic tree $(t,\sigma)$, we choose as orientation of the top dimensional cell of $P_t$ the positively oriented basis \[ e_j=(1,0,\ldots,0,-1_{j+1},0,\cdots,0) \ , \] where $-1$ is in position $j+1$ for $j=1,\ldots,n-2$. For any operadic tree $(t,\kappa\circ \sigma)$ obtained from $(t,\sigma)$ by the action of an element $\kappa$ of the symmetric group, we set the orientation of the top-dimensional cell of $P_{(t,\kappa\circ\sigma)}$ to be the orientation of $P_{(t,\sigma)}$ multiplied by $\text{sgn}(\kappa)$. Then, we choose the orientation of any other cell $(t,\mathcal{N})$ of $P_t$ to be the one induced by operadic composition as follows. We consider the unique sequence of substitution of trivially nested trees $(t,\mathcal{N})=(\cdots(t_1\circ_{i_1}t_2)\circ_{i_2} t_3) \cdots \circ_{i_k} t_{k+1})$ arising from the increasing order on $\mathcal{N}$ (\cref{def:nestorder}), and we set the orientation of $(t,\mathcal{N})$ to be the image of the positively oriented basis of the top cells of the polytopes $P_{t_i}$ under this sequence of operations. Computing the signs amounts to comparing bases where the vectors have been permuted. Since we have chosen the increasing order on the nests, we recover precisely the signs involved in the composition of $\mathcal{O}_\infty$. 

We claim that this choice of orientations recovers the signs in the differential of $\mathcal{O}_\infty$. It is enough to consider the boundary map of the top cell of $P_t$. Let $(t,\mathcal{N})$ be a facet of $P_t$ and let $t'$ and $t''$ be two composable operadic trees with trivial nestings such that $t'\circ_i t''=(t,\mathcal{N})$. Let $N$ denote the unique non-trivial nest of $\mathcal{N}$. We denote by $(e_j')_{1\leq j\leq k-1}$ and $(e_j'')_{1\leq j\leq l-1}$, the positively oriented basis associated to $t'$ and $t''$, respectively. We recall the application $\Theta$ and its associated permutation $\sigma_N : E(t')\sqcup E(t'')\to E(t)$ from Point (5) of \cref{prop:PropertiesKLoday}. We choose an outward pointing normal vector $\nu$ to the facet $(t,\mathcal{N})$. The sign associated to this facet in the sum $\partial(t)$ is given by comparing the orientation induced by the operad structure and the orientation of $P_t$. This amounts to computing the determinant $\det{(\nu, \Theta(e_j'), \Theta(e_j''))}$ in the basis $e_j$. We distinguish two cases. 
\begin{enumerate}
    \item If $N$ contains 1, i.e. if $\sigma_N(1)\neq 1$, an outward pointing normal vector $\nu$ is given by forgetting the first coordinate of the vector $\vec N-(1,\ldots,1)$. We have in this case 
    \begin{eqnarray*}
        \Theta(e_j')&=&-e_{\sigma_N(1)-1}+e_{\sigma_N(j+1)-1} \ , \quad 1\leq j\leq k-1 \\
        \Theta(e_j'')&=&e_{\sigma_N(j+1+k)-1} \ , \quad \quad 1\leq j\leq l-1 
    \end{eqnarray*}
    and the value of the determinant is \[\det{(\nu, \Theta(e_j'), \Theta(e_j''))}=-|E(t)\setminus N|(-1)^{|E(t)\setminus N|}\textrm{sgn}(\sigma_N) \ .\] 

    \item If $N$ does not contain 1, i.e. if $\sigma_N(1)= 1$, an outward pointing normal vector $\nu$ is given by forgetting the first coordinate of the vector $\vec N$. We have in this case 
    \begin{eqnarray*}
        \Theta(e_j')&=&e_{\sigma_N(j+1)-1} \ , \quad \quad 1\leq j\leq k-1 \\
        \Theta(e_j'')&=&-e_{\sigma_N(1+k)-1}+e_{\sigma_N(j+1+k)-1} \ ,  \quad 1\leq j\leq l-1 
    \end{eqnarray*}
    and the value of the determinant is \[\det{(\nu, \Theta(e_j'), \Theta(e_j''))}=-|N|(-1)^{|E(t)\setminus N|}\textrm{sgn}(\sigma_N) \ .\] 
\end{enumerate}
We thus recover in both cases the sign of the differential of $\mathcal{O}_\infty$. 
\end{proof}

\begin{corollary} \label{coroll:functorialtensor} The image of the diagonal maps under the cellular chains functor define a morphism of operads in chain complexes $\mathcal{O}_\infty\to\mathcal{O}_\infty\otimes \mathcal{O}_\infty$, and thus a functorial tensor product of non-symmetic operads up to homotopy. 
\end{corollary}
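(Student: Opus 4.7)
The plan is to obtain the morphism $\mathcal{O}_\infty \to \mathcal{O}_\infty \otimes \mathcal{O}_\infty$ by passing the topological diagonal constructed in \cref{thm:MainOperad} through the cellular chains functor, and then to promote it to a functorial tensor product on $\mathcal{O}_\infty$-algebras by the usual pullback construction.

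First, I fix a family of principal orientation vectors $\vec v$ for every $P_t$ as in \cref{thm:MainOperad}, and the cellular orientations of \cref{thm:choiceorientation}. By \cref{prop:CWcomplexstructure}, \cref{thm:MainOperad} applies in the category $\CW$ of CW complexes and yields a morphism of symmetric $\NN$-colored operads
\[
\triangle \colon O_\infty \longrightarrow O_\infty \times O_\infty
\]
in $\CW$, where the product is taken component-wise. Next I apply the cellular chains functor $C_\bullet^{\mathrm{cell}}$, which is lax symmetric monoidal via the standard product cell structure: since each cell of $P_t \times P_t$ is the product of two cells, there is a natural \emph{isomorphism} of dg vector spaces $C_\bullet^{\mathrm{cell}}(P_t \times P_t) \cong C_\bullet^{\mathrm{cell}}(P_t) \otimes C_\bullet^{\mathrm{cell}}(P_t)$, provided the chosen cellular orientation on each product cell agrees with the product of the orientations of its factors. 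I will make this compatibility part of the choice of orientations; it follows straightforwardly from the recipe in the proof of \cref{thm:choiceorientation}, where orientations on general cells are induced from those of top-dimensional cells by the operadic composition, together with the fact that $\Theta$ is an isomorphism preserving the positively oriented bases.

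Composing with the isomorphism $C_\bullet^{\mathrm{cell}}(O_\infty) \cong \mathcal{O}_\infty$ of \cref{thm:choiceorientation}, I obtain a morphism of dg symmetric $\NN$-colored operads
\[
\triangle_* \colon \mathcal{O}_\infty \longrightarrow \mathcal{O}_\infty \otimes \mathcal{O}_\infty,
\]
where the right-hand side is the component-wise tensor product endowed with its natural dg symmetric colored operad structure (signs from the Koszul rule). On a basis element, $\triangle_*(t,\mathcal{N},\sigma)$ is the signed sum over the pairs $(F,G)\in \Ima\triangle_{(t,\vec v)}$ with $F,G$ refining $(t,\mathcal{N})$, the signs being dictated by the cellular orientations; by \cref{thm:formulaoperahedra}, this sum admits an explicit combinatorial description. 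Functoriality on algebras is then standard: given two $\mathcal{O}_\infty$-algebras $(A,\mu^A)$ and $(B,\mu^B)$, corresponding to morphisms $\mu^A,\mu^B \colon \mathcal{O}_\infty \to \mathrm{End}_A, \mathrm{End}_B$, the tensor product $A\otimes B$ is endowed with the structure defined by the composite
\[
\mathcal{O}_\infty \xrightarrow{\triangle_*} \mathcal{O}_\infty \otimes \mathcal{O}_\infty \xrightarrow{\mu^A \otimes \mu^B} \mathrm{End}_A \otimes \mathrm{End}_B \longrightarrow \mathrm{End}_{A\otimes B},
\]
the last map being the canonical morphism of colored operads. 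Functoriality in $A$ and $B$ is immediate from the functoriality of each arrow.

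The main point to be careful about, and what I expect to be the only real obstacle, is ensuring that the cellular orientations chosen in \cref{thm:choiceorientation} are compatible with products, so that the cellular chains functor is strictly symmetric monoidal on the subcategory of operahedra and their products under the maps $\Theta$ and $\triangle$. Concretely, one must check that the positively oriented basis on the top cell of $P_{t'\circ_i t''}$ induced from those of $P_{t'}$ and $P_{t''}$ via $\Theta$ matches the one prescribed in \cref{thm:choiceorientation}, and that the orientations of lower-dimensional faces—defined by iterating operadic composition along the increasing order on nestings of \cref{def:nestorder}—are compatible with the Künneth isomorphism on each product cell appearing in the image of $\triangle$. Both checks are local and boil down to sign computations analogous to those already performed in the proof of \cref{thm:choiceorientation}; once they are done, the remaining content of the corollary follows formally.
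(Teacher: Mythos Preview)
Your proposal is correct and follows essentially the same route as the paper: apply the (strong symmetric monoidal) cellular chains functor to the operad morphism $\triangle$ of \cref{thm:MainOperad}, identify with $\mathcal{O}_\infty$ via \cref{thm:choiceorientation}, and then pull back along $\mu^A\otimes\mu^B$ and the canonical map $\mathrm{End}_A\otimes\mathrm{End}_B\to\mathrm{End}_{A\otimes B}$. The orientation-compatibility checks you flag in your final paragraph are not additional obstacles: the paper simply invokes that cellular chains is strong symmetric monoidal, and the operadic compatibility of orientations is exactly what \cref{thm:choiceorientation} already establishes, so you need not redo those sign computations here.
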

\begin{proof} The cellular chains functor is strong symmetric monoidal and sends the operad $O_\infty$ to the operad $\mathcal{O}_\infty$. For $\mathcal{P}$ and $\mathcal{Q}$ two homotopy operads defined by morphisms of $\mathbb{N}$-colored operads $f:\mathcal{O}_\infty\to \textrm{End}_\mathcal{P}$ and $g:\mathcal{O}_\infty\to \textrm{End}_\mathcal{Q}$, the composite of morphisms \[\mathcal{O}_\infty \xrightarrow{C_\bullet^{\emph{cell}}(\triangle)} \mathcal{O}_\infty\otimes \mathcal{O}_\infty \xrightarrow{f\otimes g}\textrm{End}_\mathcal{P}\otimes \textrm{End}_\mathcal{Q}\to \textrm{End}_{\mathcal{P}\otimes \mathcal{Q}} \ , \] where the last arrow is given by permutation of the factors, defines the structure of an operad up to homotopy on the tensor product of $\mathcal{P}$ and $\mathcal{Q}$. 
\end{proof}

\begin{remark} 
    \label{rem:coassociativity} 
    The diagonal $\triangle_{(t,\vec v)}$ is neither pointwise nor cellular coassociative and the induced diagonal of the dg colored operad $\mathcal{O}_\infty$ is not coassociative either. 
    M. Markl and S. Schnider have actually shown in \cite[Section 6]{MarklShnider06} that such a diagonal \emph{cannot} exist. 
    So the newly defined tensor product cannot make the category of homotopy non-symmetric operads (with strict morphisms) into a symmetric monoidal category. 
\end{remark}
 
We end by computing the signs associated to our choice of cellular orientation for the approximation of the diagonal $\triangle_{(t,\vec v)}$. 

\begin{definition}
\label{def:admissible} 
Let $t$ be a tree and let $\mathcal{N},\mathcal{N}'$ be a pair of nestings such that $|\mathcal{N}|+|\mathcal{N}'|=|V(t)|$. 
An edge $i\in E(t)$ is said to be \emph{admissible} in $\mathcal{N}$ if $i\neq \min(\min \mathcal{N}(i))=:\inf_i(\mathcal{N})$. The set of admissible edges of $\mathcal{N}$ is denoted by $Ad(\mathcal{N})$. 

We give the set $Ad(\mathcal{N})\sqcup Ad(\mathcal{N}')$ a total order by using the increasing order on the nestings (\cref{def:nestorder}) and within a nest by following the numbering of the edges in increasing order. 
Then, the function $\sigma_{\mathcal{N}\mathcal{N}'} : Ad(F)\sqcup Ad(G)\to (1,\ldots, |Ad(\mathcal{N})\sqcup Ad(\mathcal{N}')|)$ defined for $i\in Ad(\mathcal{N})$ by 
\begin{equation*}
\sigma_{\mathcal{N}\mathcal{N}'}(i)=
\begin{cases} 
    \inf_i(\mathcal{N})-1 & \text{ if } i \in Ad(\mathcal{N}) \cap Ad(\mathcal{N}') \text{ and } 1 \neq \inf_i(\mathcal{N}) < \inf_i(\mathcal{N}') \\
    i-1 & \text{ otherwise }
\end{cases}
\end{equation*}
and similarly on $i\in Ad(\mathcal{N}')$ by reversing the roles of $\mathcal{N}$ and $\mathcal{N}'$, induces a permutation of the set $\{1,\ldots, |Ad(\mathcal{N}) \sqcup Ad(\mathcal{N}')|\}$ that we still denote by $\sigma_{\mathcal{N}\mathcal{N}'}$. 
\end{definition}

For convenience, let us recall that  \[ D(n)\coloneqq \{(I,J) \ | \ I,J\subset\{1,\ldots,n\}, |I|=|J|, I\cap J=\emptyset, \min(I\cup J)\in I \} \ . \]

\begin{proposition}[Tensor product of operads up to homotopy] 
\label{cor:tensorproduct} 
Given two non-symmetric non-unital operads up to homotopy $(\mathcal{P},\{\mu_t\})$ and $(\mathcal{Q}, \{\nu_t\})$, their \emph{tensor product} $(\mathcal{P}\otimes\mathcal{Q},\{\rho_t\})$ is given by the Hadamard tensor product of spaces $(\mathcal{P}\otimes\mathcal{Q})(n)\coloneqq \mathcal{P}(n)\otimes\mathcal{Q}(n)$ and the operations \[\rho_t\coloneqq\sum_{\substack{
    \mathcal{N},\mathcal{N}' \in \mathcal{N}(t) \\ |\mathcal{N}|+|\mathcal{N}'|=|V(t)| \\ \forall (I,J) \in D(|E(t)|), \exists N \in \mathcal{N}, |N\cap I|>|N\cap J| \\ \text{ or } \exists N' \in \mathcal{N}', |N'\cap I|<|N'\cap J|  }} (-1)^{|Ad(\mathcal{N})\cap Ad(\mathcal{N}')|} \emph{sgn}(\sigma_{\mathcal{N}\mathcal{N}'}) \ \mathcal{N}(\mu_{t}) \otimes \mathcal{N}'(\nu_t) \ \sigma_t \]
where $\mathcal{N}(\mu_t)$ and $\mathcal{N}'(\nu_t)$ denote the composition of the operations corresponding to the nests of $\mathcal{N}$ and $\mathcal{N}'$ in the increasing orders and where $\sigma_t$ is the isomorphism $\mathcal{P}(n_1)\otimes \mathcal{Q}(n_1)\otimes \cdots \otimes \mathcal{P}(n_k)\otimes \mathcal{Q}(n_k) \cong \mathcal{P}(n_1)\otimes \cdots \otimes \mathcal{P}(n_k) \otimes \mathcal{Q}(n_1)\otimes \cdots \otimes \mathcal{Q}(n_k)$. 
\end{proposition}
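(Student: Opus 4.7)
The plan is to unpack, for a fixed tree $t$ viewed as a generator of $\mathcal{O}_\infty$, the composition
\[
\mathcal{O}_\infty \xrightarrow{C_\bullet^{\mathrm{cell}}(\triangle)} \mathcal{O}_\infty \otimes \mathcal{O}_\infty \xrightarrow{f \otimes g} \mathrm{End}_\mathcal{P} \otimes \mathrm{End}_\mathcal{Q} \to \mathrm{End}_{\mathcal{P} \otimes \mathcal{Q}}
\]
from \cref{coroll:functorialtensor}, and to match the outcome with the proposed formula for $\rho_t$.

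First, the image of the top-dimensional cell of $P_t$ under $C_\bullet^{\mathrm{cell}}(\triangle)$ is determined by the cellular image of $\triangle_{(t,\vec v)}$. Applying \cref{thm:formulaoperahedra} to $P=P_t$ with a principal orientation vector $\vec v$, this image is a union of products $F \times G$ parametrized by pairs of nestings $(\mathcal{N}, \mathcal{N}')$ satisfying the displayed combinatorial condition; the restriction $|\mathcal{N}| + |\mathcal{N}'| = |V(t)|$ is the translation, in the language of nestings, of the dimension constraint $\dim F + \dim G = \dim P_t$ that selects top-dimensional pairs in the image. Under the identification of \cref{thm:choiceorientation}, each such pair labels a summand of $C_\bullet^{\mathrm{cell}}(\triangle)(t)$. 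Pushing forward via $f \otimes g$ then yields the composed operations $\mathcal{N}(\mu_t) \otimes \mathcal{N}'(\nu_t)$, where the compositional order within each factor is dictated by the increasing order on nests of \cref{def:nestorder}, and the last arrow introduces the reshuffling isomorphism $\sigma_t$.

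The second step is to identify the two sign contributions. The factor $\mathrm{sgn}(\sigma_{\mathcal{N}\mathcal{N}'})$ comes from comparing cellular orientations: each face $F$ (resp. $G$) inherits its orientation from operadic composition along the increasing order of its nesting as in the proof of \cref{thm:choiceorientation}, and the product $F \times G$ then carries the concatenated orientation. Matching this with the restriction of the ambient product orientation of $P_t \times P_t$ amounts to reordering the coordinate basis so as to list the admissible edges of $\mathcal{N}$ first (nest-by-nest in increasing order) followed by those of $\mathcal{N}'$. This reordering is exactly the permutation $\sigma_{\mathcal{N}\mathcal{N}'}$ of \cref{def:admissible}. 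The factor $(-1)^{|Ad(\mathcal{N}) \cap Ad(\mathcal{N}')|}$ is a Koszul sign accumulated through the symmetric braidings involved in applying $f \otimes g$ and then $\sigma_t$; it counts the commutations between odd-degree operations coming from $\mathcal{P}$ and from $\mathcal{Q}$, which turn out to be indexed precisely by edges admissible in both nestings.

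The main obstacle will be this last Koszul bookkeeping. Verifying the exponent $|Ad(\mathcal{N}) \cap Ad(\mathcal{N}')|$ requires tracking the degrees of the intermediate operations $\mu_{t_i}$ and $\nu_{t_j}$ appearing in the compositions $\mathcal{N}(\mu_t)$ and $\mathcal{N}'(\nu_t)$, and showing that the total sign accrued by the braidings depends only on the edges simultaneously admissible in both nestings. The natural approach is induction on $|\mathcal{N}| + |\mathcal{N}'|$, exploiting the recursive structure dictated by the increasing order on nests; at each step the insertion of a new nest either introduces or absorbs admissible edges in a controlled way, so the Koszul sign factors as a product of elementary contributions. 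Once both sign contributions are pinned down, combining them with the signed expansion of $C_\bullet^{\mathrm{cell}}(\triangle)(t)$ reproduces the formula stated in the proposition.
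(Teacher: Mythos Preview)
Your structural outline matches the paper: unpack the composite from \cref{coroll:functorialtensor}, index the summands via \cref{thm:formulaoperahedra}, and read off signs from the cellular orientations of \cref{thm:choiceorientation}. The point of divergence, and the gap, is in your sign analysis.

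You split the sign into two pieces with two different sources: $\mathrm{sgn}(\sigma_{\mathcal{N}\mathcal{N}'})$ from an orientation comparison, and $(-1)^{|Ad(\mathcal{N})\cap Ad(\mathcal{N}')|}$ as a Koszul sign produced by the braidings in $f\otimes g$ and in $\sigma_t$. This second attribution does not hold up. The map $f\otimes g$ involves no swap of tensor factors, and the Koszul signs coming from $\sigma_t$ and from $\mathrm{End}_\mathcal{P}\otimes\mathrm{End}_\mathcal{Q}\to\mathrm{End}_{\mathcal{P}\otimes\mathcal{Q}}$ depend on the degrees of the \emph{elements} of $\mathcal{P}$ and $\mathcal{Q}$ being acted on, not on the nestings; they are absorbed into the symbol $\sigma_t$ in the formula and do not produce a fixed sign attached to $(\mathcal{N},\mathcal{N}')$. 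Your proposed induction on $|\mathcal{N}|+|\mathcal{N}'|$ to extract this exponent from braidings is therefore aimed at the wrong target.

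In the paper both factors arise together from a \emph{single} geometric determinant. The sign attached to the cell $F\times G$ in $C_\bullet^{\mathrm{cell}}(\triangle)(t)$ is obtained by comparing the orientation that $\triangle_{(t,\vec v)}$ induces on the region $(\mathring F+\mathring G)/2\subset P_t$ with the product orientation of $F\times G$. Using the pointwise bot-top description of $\triangle_{(t,\vec v)}$, this reduces to the sign of the determinant of the concatenated basis $(e_j^F,e_j^G)$ written in the basis $(e_j)$ of the top cell of $P_t$. Computing this determinant with the explicit bases chosen in the proof of \cref{thm:choiceorientation} yields $(-1)^{|Ad(\mathcal{N})\cap Ad(\mathcal{N}')|}\,\mathrm{sgn}(\sigma_{\mathcal{N}\mathcal{N}'})$ in one stroke: the permutation part is $\sigma_{\mathcal{N}\mathcal{N}'}$, and the extra $(-1)$'s come from the minus signs present in the vectors $\Theta(e_j')$ whenever a nest contains the edge labelled $1$ (cf.\ the two cases in that proof). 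So replace your two-source story by this single determinant; no separate Koszul bookkeeping or induction is needed.
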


Before proceeding to the proof, let us be more precise about what we mean by "the operations corresponding to the nests of $\mathcal{N}$". 
Let $(\mathcal{P},\{\mu_t\})$ be an operad up to homotopy, as in \cref{def:homotopyoperad}, and consider the nested tree $(t,\mathcal{N})$ represented in \cref{fig:treeandnesting}. 
The increasing order on nestings gives the following total order on the nests: $N_1\coloneqq\{1,2,3,4\}< N_2\coloneqq \{1,2,3\}< N_3\coloneqq \{1,2\}< N_4 \coloneqq \{1\}$.
Consider now the tree $t_1$ obtained by contracting inside $t=t(N_1)$ the subtree $t(N_2)$, the tree $t_2$ obtained by contracting inside $t(N_2)$ the subtree $t(N_3)$, the tree $t_3$ obtained by contracting inside $t(N_3)$ the subtree $t(N_4)$, and the tree $t_4=t(N_4)$. 
We have $t=(((t_1 \circ_1 t_2) \circ_1 t_3) \circ_1 t_4)$. 
The composition of operations corresponding to the nesting $\mathcal{N}$ is then $(((\mu_{t_1}\circ_1 \mu_{t_2})\circ_1 \mu_{t_3})\circ_1 \mu_{t_4})$.

\begin{proof}[Proof of {\cref{cor:tensorproduct}}] This is just unravelling the definition of tensor product arising from \cref{coroll:functorialtensor}. For a pair of faces $(F,G)\in \Ima\triangle_{(t,\vec v)}$, the sign comes from the comparison of our choice of orientation on $F\times G$, which is just the product of the orientations of $F$ and $G$, with the orientation induced by the diagonal $\triangle_{(t,\vec v)}$ when restricted to $(\mathring F + \mathring G )/ 2$. Let $(e_j)$ denote as before the positively oriented basis of the top cell of $P_t$. We need to compute the sign of the determinant of the vectors $\triangle_{(t,\vec v)}(e_j)$ expressed in the basis $\{e_j^F \times 0\} \cup \{0 \times e_j^G\}$ corresponding to the orientation of $F\times G$. By the very definition of $\triangle_{(t,\vec v)}$ (\cref{def:Diag}), this is the same as computing the sign of the determinant of the $e_j^F, e_j^G$ expressed in the basis $(e_j)$, which gives the expression appearing above. 
\end{proof}

\bibliographystyle{amsalpha}
\bibliography{bib}

\end{document}